%% file: main.tex
\documentclass[11pt]{amsart}
\input{commands.tex}

\title{Twisted \texorpdfstring{$\mathbb{A}^1$}{A1}-degrees and Wronski maps}
\author{Thomas Brazelton and Jake Levinson}

\begin{document}

\begin{abstract}
The Wronski map is a central object of study in osculating Schubert calculus, and has been the focus of much work in real algebraic geometry over the last few decades due to its remarkable reality properties. In this work, we study it over arbitrary fields through the lens of quadratically-enriched enumerative geometry.  A basic obstruction is that the Wronski map is not oriented in general, so existing approaches do not apply directly. To that end, we introduce \emph{twisted $\A^1$-degrees} of finite flat lci maps of schemes, which generalize ordinary $\A^1$-degrees by allowing us to twist an orientation by the equation of a divisor, and $\A^1$-\emph{chambers}, which generalize the notion of connected component of a real manifold.

We then show that a certain weighted count of the fibers of the Wronski map depends only on the action of the absolute Galois group of the base field on the roots of the Wronski polynomial. This unifies Eremenko and Gabrielov's work on real degrees of Wronski maps, Levinson and Purbhoo's work on the character orientation of the real Wronski map, and Brazelton's work on enriched degrees of Wronski maps into one framework.
\end{abstract}
\maketitle

\input{sections/intro.tex}
\input{sections/background_a1.tex}
\input{sections/twisted_degrees.tex}
\input{sections/osculating_schubert_calc.tex}

\input{sections/twisted_wr.tex}
\input{sections/examples.tex}

\nocite{*}
\bibliography{citations.bib}
\bibliographystyle{amsalpha}
\end{document}

%% file: commands.tex
\usepackage{%
  mathtools,  
  amssymb,    
  amsthm,     
  amsfonts,   
  thmtools,   
  graphicx,
  float,
  color,
  xcolor,
  tikz,
  tikz-cd,
  mathrsfs,   
  etoolbox    
}

\usepackage{tabularx}
\newcolumntype{Y}{>{\centering\arraybackslash}X}

\definecolor{darkred}{rgb}{0.75,0,0}
\def\customcitecolor{darkred}
\def\customlinkcolor{darkred}

\usepackage[%
    colorlinks,
    citecolor=\customcitecolor,%
    linkcolor=\customlinkcolor,%
    urlcolor=\customlinkcolor%
]{hyperref}

\usepackage[capitalise,nameinlink,noabbrev]{cleveref}

\usepackage[margin=1in]{geometry}

\usepackage[final,nopatch=footnote]{microtype}

\theoremstyle{definition}
\newtheorem{theorem}{Theorem}[section]
\numberwithin{theorem}{section} 
\numberwithin{equation}{section} 

\usepackage{mfirstuc}
\newcommand{\capitalizename}[1]{\makefirstuc{#1}}

\newcommand{\defthm}[1]{%
  \newtheorem{#1}[theorem]{\capitalizename{#1}}%
}

\newcommand{\defthms}[1]{%
  \forcsvlist{\defthm}{#1}%
}

\defthms{%
  answer,assumption,claim,conjecture,construction,corollary,
  counterexample,definition,digression,discussion,example,
  examples,exercise,fact,goal,idea,intuition,lemma,
  motivation,notation,note,proposition,question,remark,setup,
  slogan,strategy,terminology,upshot%
}

\Crefname{notation}{Notation}{Notations}
\Crefname{question}{Question}{Questions}
\Crefname{terminology}{Terminology}{Terminologies}

\newcommand{\deftextcommand}[1]{%
  \expandafter\providecommand\csname #1\endcsname{\mathrm{#1}}%
}
\newcommand{\deftextcommands}[1]{%
    \forcsvlist{\deftextcommand}{#1}%
}

\deftextcommands{ab,alg,an,ann,Aut,BG,BGL,Bl,BO,BP,BSL,BSO,BSp,BSU,BU,can,cd,cdh,CH,cl,coBar,codim,codom,coeq,coev,cof,cofib,coker,colim,coim,cone,conj,const,coTor,cyc,diag,Desc,dg,Disc,disc,Div,dual,eff,EKL,End,eq,ess,et,Et,EU,ev,Ex,ex,Exc,Ext,fib,Fix,Fl,fppf,fpqc,Frac,Frob,Fun,Gal,gen,GL,gp,Gr,gr,GW,Her,Ho,hocofib,hocolim,hofib,holim,Hom,id,Idem,im,incl,Ind,ind,inj,Inn,Inv,inv,iso,Jac,KGL,kgl,KH,KO,ko,KQ,kq,KR,KSp,KU,ku,Lan,Map,map,MGL,MO,Mor,mor,MSL,MSO,MSp,MSU,MU,mult,MUP,Nm,ob,obj,op,Orb,ord,Out,perf,Perm,PGL,Pic,pr,pre,Proj,proj,prom,PSL,quot,Ran,rank,Res,res,RO,sep,sgn,SH,sig,Sing,SL,SO,soc,Sp,Span,Spec,Spin,spn,Sq,st,Stab,SU,supp,Supp,Syl,syl,Sym,syn,SYT,TC,td,Th,THH,Tor,Tot,TP,TR,Tr,tr,triv,univ,var,veff,vol,Wel,Wr,Zar}

\deftextcommands{Ab,Aff,Alg,Ani,Bimod,CAlg,Cat,CDGA,CG,CGWH,Ch,CMon,coAlg,Coh,CommRing,ConjSub,coMod,Cor,Corr,CoSh,CRing,CW,Field,Fin,FinSet,Gpd,Grp,Grpd,Grph,Kan,Kar,LMod,Mfld,Mod,NAlg,Ouv,Perf,Poset,Pr,Pre,PSh,PShv,qCat,QCoh,Rep,Ring,RMod,sAb,Sch,Set,SH,Sh,Shv,Sm,Sp,Spc,Spectra,sPre,sSet,sShv,Stack,Sub,Top,Tors,Var,Vect}

\newcommand{\defblackboardletter}[1]{%
  \expandafter\providecommand\csname #1\endcsname{\mathbb{#1}}
}
\newcommand{\defblackboardletters}[1]{%
  \forcsvlist{\defblackboardletter}{#1}%
}

\defblackboardletters{A,C,F,P,Q,R,Z}

\providecommand{\pb}{\arrow[dr,phantom,"\lrcorner" very near start]}

\providecommand{\xto}[1]{\xrightarrow{#1}}

\providecommand{\hookto}{\xhookrightarrow{}}

\providecommand{\tto}{\twoheadrightarrow}

\makeatletter
\newcommand*{\da@rightarrow}{\mathchar"0\hexnumber@\symAMSa 4B }
\newcommand*{\da@leftarrow}{\mathchar"0\hexnumber@\symAMSa 4C }
\newcommand*{\xdashto}[2][]{%
  \mathrel{%
    \mathpalette{\da@xarrow{#1}{#2}{}\da@rightarrow{\,}{}}{}%
  }%
}
\newcommand*{\da@xarrow}[7]{%
  \sbox0{$\ifx#7\scriptstyle\scriptscriptstyle\else\scriptstyle\fi#5#1#6\m@th$}%
  \sbox2{$\ifx#7\scriptstyle\scriptscriptstyle\else\scriptstyle\fi#5#2#6\m@th$}%
  \sbox4{$#7\dabar@\m@th$}%
  \dimen@=\wd0 %
  \ifdim\wd2 >\dimen@
    \dimen@=\wd2 %
  \fi
  \count@=2 %
  \def\da@bars{\dabar@\dabar@}%
  \@whiledim\count@\wd4<\dimen@\do{%
    \advance\count@\@ne
    \expandafter\def\expandafter\da@bars\expandafter{%
      \da@bars
      \dabar@ 
    }%
  }%
  \mathrel{#3}%
  \mathrel{%
    \mathop{\da@bars}\limits
    \ifx\\#1\\%
    \else
      _{\copy0}%
    \fi
    \ifx\\#2\\%
    \else
      ^{\copy2}%
    \fi
  }%
  \mathrel{#4}%
}
\makeatother

\RequirePackage{bbm}

\RequirePackage{pict2e,picture}
\makeatletter
\DeclareRobustCommand{\DDelta}{{\mathpalette\bb@Delta\relax}}
\newcommand{\bb@Delta}[2]{%
  \begingroup
  \sbox\z@{$\m@th#1\Delta$}%
  \dimendef\Dht=6 \dimendef\Dwd=8
  \setlength{\Dwd}{\wd\z@}%
  \setlength{\Dht}{\ht\z@}%
  \begin{picture}(\Dwd,\Dht)
  \put(0,0){$\m@th#1\Delta$}
  \put(.42\Dwd,.7\Dht){\line(10,-26){.25\Dht}}
  \end{picture}%
  \endgroup
}

\let\emptyset\varnothing

\usepackage[centertableaux]{ytableau}

\newcommand{\smallydiagram}[1]{{%
    \ytableausetup{boxsize=0.3em}%
    \ydiagram{#1}%
}}

\newcommand{\medydiagram}[1]{%
    \ytableausetup{boxsize=0.5em}%
    \ydiagram{#1}
}

\newcommand{\horizdomino}{\smallydiagram{2}}
\newcommand{\vertdomino}{\smallydiagram{1,1}}

\renewcommand{\square}{\smallydiagram{1}}
\newcommand{\rect}{\smallydiagram{3,3}}

\newcommand{\medhorizdomino}{\medydiagram{2}}
\newcommand{\medvertdomino}{\medydiagram{1,1}}

\newcommand{\medsquare}{\medydiagram{1}}

\newcommand{\Zdomino}{Z_{\vertdomino}}
\newcommand{\rhodomino}{\rho_{\ytableausetup{boxsize=0.2em}\ydiagram{1,1}\ytableausetup{boxsize=0.5em}}}
\newcommand{\sdomino}{s_{\ytableausetup{boxsize=0.2em}\ydiagram{1,1}\ytableausetup{boxsize=0.5em}}}

\usetikzlibrary{arrows.meta,decorations.markings}

\tikzset{
  orient/.style={
    decoration={markings,
      mark=between positions {0.5*#1} and {\pgfdecoratedpathlength} step {#1}
           with {\arrow{Stealth[length=2.5mm]}}},
    postaction={decorate}},
  orient/.default=2cm}

\renewcommand{\P}{\mathbb{P}}
\newcommand{\Osc}{\mathrm{Osc}}
\newcommand{\poly}[2]{\mathscr{P}_{#1}(#2)}
\newcommand{\sheafGW}{\mathcal{G}\mathcal{W}}

%% file: sections/intro.tex
\section{Introduction}

In the past decade, new techniques have emerged that import techniques from motivic homotopy theory into enumerative algebraic geometry. Common to these results is the idea that, while the principle of conservation of number breaks for enumerative problems over a non-closed field, one can often obtain a conserved quantity by weighting each solution by a quadratic form. This program is called \emph{quadratically enriched enumerative geometry}. Landmark results in this direction include the enriched count of the 27 lines on a cubic surface \cite{KW27} and the recent development of quadratically enriched Gromov--Witten invariants \cite{KLSW-count}.

One technique used in this program is to associate to a map $f : X \to Y$ (say a finite map of smooth varieties) a so-called \emph{$\A^1$-degree} $\deg(f)$, taking values in the Grothendieck--Witt sheaf $\sheafGW(Y)$. The $\A^1$-degree can be interpreted as counting the points of a fiber $f^{-1}(y)$, weighted by the square classes of the Jacobian determinant of $f$ along the fiber (see \cite[3.10]{KLSW-count} or \Cref{prop:sum-of-local-degrees}). To carry out this procedure, two important requirements are:
\begin{itemize}
\item $f$ must be \emph{orientable}, that is, the bundle of relative differentials $\omega_f$ must be isomorphic to the square of a line bundle,
\item for the $\A^1$-degree count to be independent of the choice of fiber, $Y$ must be $\A^1$-connected.
\end{itemize}
These are quadratically-enriched analogues of the classical fact that the Brouwer degree of a map of real manifolds is defined only when the map is oriented, and is constant only over a connected base.

This paper makes two contributions. In the first part of the paper, we show how to relax the orientability and $\A^1$-connectivity assumptions above. We introduce the notions of \emph{orientation of a map relative to a divisor} (\Cref{def:orientation-relative-to-divisor}), \emph{twisted $\A^1$-degree} (\Cref{def:twisted-degree}) and \emph{$\A^1$-chamber} (\Cref{def:A1-chamber,def:relative-A1-chamber}). These definitions allow us to use `twisted' orientations $\rho$ of $f$, essentially twisting the local weighting of each point of the fiber by the equation $s$ of a Cartier divisor, obtaining a new class $\deg^s(f, \rho) \in \sheafGW(Y)$. We show:

\begin{proposition}[Twisted local degree formula]\label{prop:twisted-degree-sum-of-local-degrees-intro}
Let $\rho$ be an orientation of $f$ relative to a Cartier divisor $D$ and let $s$ be an equation for $D$. Let $y \in Y \setminus f(D)$ be a point over which $f$ is \'etale. Then the twisted degree is given by
\[
    \deg^s(f,\rho)(y) = \sum_{x\in f^{-1}(y)} \Tr_{k(x) / k(y)} \langle \Jac(f)(x)\cdot s(x)\rangle.
\]
This is an equality in the Grothendieck--Witt group of the residue field $k(y)$.
\end{proposition}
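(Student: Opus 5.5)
The plan is to reduce to the untwisted local degree formula (\Cref{prop:sum-of-local-degrees}). The twisted degree $\deg^s(f,\rho)$ should be built from a genuine orientation of the restriction of $f$ away from $D$. An orientation of $f$ relative to $D$ is, roughly, an isomorphism $\rho\colon \omega_f \otimes \mathcal{O}(D) \cong L^{\otimes 2}$ (or $\omega_f \cong L^{\otimes 2}(-D)$, depending on convention). Multiplying by the section $s$ trivializes $\mathcal{O}(D)$ off $D$, so $\rho$ together with $s$ gives an honest orientation $\rho_s$ of $f|_{X\setminus D}$. We may then assume, as in the definition (\Cref{def:twisted-degree}), that $\deg^s(f,\rho)$ agrees with the ordinary $\A^1$-degree of $f$ with respect to $\rho_s$, at least over the open set $Y\setminus f(D)$. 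Concretely, it is the class of the symmetric bilinear form on $f_*\mathcal{O}_X$ (or its twist) obtained by composing multiplication with the trace $f_*\omega_f \to \mathcal{O}_Y$ and the orientation, now modified by $s$.

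Step one: fix $y \in Y\setminus f(D)$ with $f$ étale over $y$. Base change to $\Spec k(y)$, using that formation of the degree form commutes with base change for finite flat lci maps. The fiber is then $\Spec A$ with $A=\prod_{x} k(x)$ a finite étale $k(y)$-algebra, and no point $x$ lies on $D$. Over this fiber, $\omega_f$ is canonically trivialized by the Jacobian determinant: the Scheja--Storch or Grothendieck trace form on an étale algebra is $\Tr_{A/k(y)}(\Jac(f)^{-1} ab)$. The orientation $\rho_s$ compares this trivialization with the square trivialization of $L^{\otimes2}$, which introduces the factor $s(x)$, a unit since $x\notin D$.

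Step two: compute the form on each factor $k(x)$. It is $\Tr_{k(x)/k(y)}\langle \Jac(f)(x)^{-1} s(x)^{\pm1} u(x)^2\rangle$, where $u$ comes from the choice of local trivialization of $L$. Square classes absorb $u^2$. Inverses change nothing up to squares, since $\langle a^{-1}\rangle=\langle a\rangle$ because $a^{-1}=a\cdot a^{-2}$. This gives $\Tr_{k(x)/k(y)}\langle \Jac(f)(x)\, s(x)\rangle$. Summing over the factors of $A$ uses the additivity of the trace form over products.

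The main obstacle is the bookkeeping in step two. One must check that the identification of the relative dualizing sheaf with $\mathcal{O}$ via the Jacobian matches the convention used in \Cref{prop:sum-of-local-degrees}. One must also check that the twisting by $\mathcal{O}(D)$ introduces exactly one factor of $s$, and not its inverse or square. The square-class argument makes the sign of the exponent irrelevant. What remains is to confirm that the result is independent of the choice of trivialization of $L$ near the fiber, and this holds because such a change multiplies by a unit square. I would first verify all of this in the local model $f\colon \A^1\to\A^1$ with $D=\{s=0\}$, then globalize by the base change property.
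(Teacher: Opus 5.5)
Your proposal is correct and is essentially the paper's argument. The paper likewise pulls the form back to the étale fiber over $y$, splits it as a sum over $x\in f^{-1}(y)$ as in \cite[Definition~3.1]{KLSW-count}, and tracks the extra factor $s(x)$ (a unit, since $x\notin D$) through the local computations of \cite[3.5, 3.8]{KLSW-count}, exactly as in your Steps one and two. Two small points to tidy: regard $s\cdot\rho$ as an orientation of $X\setminus f^{-1}f(D)\to Y\setminus f(D)$ rather than of $f|_{X\setminus D}$, which is not finite; and drop the closing ``local model then globalize'' step, since your fiberwise argument already applies to an arbitrary finite flat lci $f$.
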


We then prove the following fiber invariance statement within $\A^1$-chambers of $f$ relative to $D$:

\begin{theorem}[{\Cref{thm:fiber-invariance-twisted-degrees-on-chambers}}] Let $f\colon X \to Y$ be a finite flat lci morphism, let $D\subseteq X$ be a Cartier divisor with equation $s$, and let $\rho$ be a $D$-orientation of $f$. Let $g\colon W \to Y$ be an $\A^1$-chamber of $f$ relative to $D$, and let $y_1,y_2\in (Y\setminus f(D))(k)$ be in the image of $W(k)$ by $g$. Then their twisted degrees are equal:
\[
\deg^s(f,\rho)(y_1) = \deg^s(f,\rho)(y_2)\text{ in }\GW(k).
\]
\end{theorem}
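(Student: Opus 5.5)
We have not yet seen the definitions of twisted degree and of $\A^1$-chamber, so the plan fixes the natural candidates. The twisted degree $\deg^s(f,\rho)$ should be a section of $\sheafGW$ over $Y$, or over $Y\setminus f(D)$. It would be obtained as follows. The $D$-orientation $\rho$ is an isomorphism $\omega_f \otimes \mathcal{O}(D) \cong L^{\otimes 2}$ (or its dual form). Over $X\setminus D$ the section $s$ trivializes $\mathcal{O}(D)$, so $\rho$ together with $s$ yields a genuine orientation of $f|_{X\setminus D}$. The degree is then the pushforward $f_*\langle 1\rangle$ via the Scharlau trace form determined by this orientation. For an $\A^1$-chamber $g\colon W\to Y$ relative to $D$, we take the definition to say two things. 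First, $W$ is $\A^1$-chain connected: any two $k$-points are joined by a chain of maps $\A^1_{k}\to W$. Second, the pullback $f_W\colon X\times_Y W\to W$ has the divisor $D_W$ disjoint from it, or at least $g$ lands in $Y\setminus f(D)$ with the twisted orientation extending over $W$.

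The proof plan is a reduction to the ordinary fiber-invariance of $\A^1$-degrees over an $\A^1$-chain connected base.

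First, I would prove that twisted degrees are compatible with base change. Concretely, $g^*\deg^s(f,\rho)=\deg^{g^*s}(f_W,g^*\rho)$ in $\sheafGW(W)$. This holds because the Scharlau form and the trace construction commute with pullback along arbitrary maps, since $f$ is finite flat. As a consequence, for $w\in W(k)$ with $g(w)=y_i$, the value $\deg^s(f,\rho)(y_i)$ equals the twisted degree of $f_W$ at $w$.

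Second, over $W$, where $s$ is invertible on the relevant locus, the twisted orientation becomes an honest orientation. The twisted degree is then an ordinary $\A^1$-degree, which is a global section of $\sheafGW$ over $W$. The key fact I would invoke is that $\sheafGW$ is $\A^1$-invariant, i.e. unramified and homotopy invariant: $\GW(k)\to\sheafGW(\A^1_k)$ is an isomorphism. Hence for any $h\colon\A^1_k\to W$, the pullback $h^*\deg$ is constant, and its fibers at $0$ and $1$ agree. Chaining such maps connects $w_1$ to $w_2$ and gives the equality. \Cref{prop:twisted-degree-sum-of-local-degrees-intro} is not needed for the invariance itself, but it identifies the values with the explicit local sums.

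I expect the main obstacle to be the first step together with its interaction with the chamber definition. If the chamber only requires $W$ to map into $Y$ with chains of $\A^1$'s whose images may meet $f(D)$ at finitely many points, then homotopy invariance cannot be applied naively. In that case I would use the relative version of the chamber. There the orientation is twisted by $s$ so that the Scharlau form extends as a nondegenerate form across $D$ after pulling back to $W$; nondegeneracy comes from the fact that twisting by $s$ exactly cancels the vanishing or pole of $\omega_f$ along $D$. Checking this extension, namely that $\deg^s$ defines a section of $\sheafGW$ over all of $W$ and not just away from $g^{-1}f(D)$, is the step I would verify carefully, using that $\sheafGW(W)\to\sheafGW(W\setminus Z)$ is injective for $W$ smooth, where $Z=g^{-1}f(D)$.
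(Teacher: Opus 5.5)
There is a genuine gap. Your skeleton matches the paper's: pull back to $W$, produce a class on all of $W$, then invoke $\A^1$-invariance. But the step carrying all the content is missing, and the mechanism you propose for it is wrong.

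\textbf{What the chamber hypothesis actually is.} In the paper, an $\A^1$-chamber of $f$ relative to $D$ is a flat $g\colon W\to Y$ with $W$ smooth and \emph{essentially} $\A^1$-connected, such that $g^*f(D)$ is twice a divisor on $W$ (and $D$ has no component in $R_f$). The map $g$ is allowed to hit $f(D)$; the chambers used for the Wronski map are surjective onto $\P^{dm}$. On $W\setminus g^{-1}f(D)$ the twisted orientation is honest, but this open set is typically not $\A^1$-connected. For the chamber $z\mapsto z^2$ in \Cref{ex:toy} it is $\mathbb{G}_m$. So everything rests on extending $g^*\deg^s(f,\rho)$ across $g^{-1}f(D)$.

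\textbf{Why your extension argument fails.} The twist by $s$ does not cancel a degeneracy; it creates one. The form $s\cdot\rho$ is the composite $\mathcal{L}\times\mathcal{L}\to\omega_f(-D)\xrightarrow{s}\omega_f$, which is degenerate exactly along $D$, and in general it does not extend. If your reasoning worked, it would make $\deg^s(f,\rho)$ extend over all of $Y$, which it need not. In \Cref{ex:toy}, with $Y=\A^1$ and $k=\R$, the twisted degree is $\langle 1,1\rangle$ for $y>0$ and $\langle 1,-1\rangle$ for $-1<y<0$. Also, injectivity of $\sheafGW(W)\to\sheafGW(W\setminus Z)$ only gives uniqueness of an extension, not its existence.

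\textbf{The missing idea.} The relative-chamber hypothesis makes the twist a square after base change. The paper's argument runs as follows.
\begin{enumerate}
\item Since $D$ has no component in $R_f$, write $f^*f(D)=D+E$ with $E$ sharing no components with $D$.
\item Pull back along the flat map $g_X\colon X_W\to X$ and use $g^*f(D)=2\widetilde D$. This shows every component of $g_X^*D$ has even multiplicity.
\item To upgrade this Weil-divisor statement to $g_X^*D=2H$ with $H$ Cartier, one needs $X_W$ smooth along $g_X^{-1}(D)$. The paper arranges this by first deleting $f(D\cap R_f)$, which has codimension $2$ in $Y$. This is why chambers are only required to be essentially $\A^1$-connected: that property survives such deletions, while $\A^1$-chain connectedness need not.
\item Write $g_X^*s=ut^2$, with $t$ an equation for $H$ and $u$ a global unit. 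By \Cref{prop:invariance-twisted-forms-under-squares}, $s\cdot\rho$ agrees off $H$ with $u\cdot(g_X^*\rho)(H)$. Here $(g_X^*\rho)(H)\colon (g_X^*\mathcal{L}(H))^{\otimes 2}\cong\omega_{f_W}$ is an honest orientation of $f_W$, as in \Cref{rmk:twisting-degree-by-section-of-2D}.
\item Its pushforward $(f_W)_*\big(u\cdot(g_X^*\rho)(H)\big)$ is a class on all of $W$ restricting to $g^*\deg^s(f,\rho)$.
\end{enumerate}
Only at this point does your final $\A^1$-invariance step apply.
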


Notably, even if $f$ is not orientable (in the usual sense), it may possess a natural twisted orientation and so have natural twisted $\A^1$-degrees and associated conserved quantities within $\A^1$-chambers.

In the second part of the paper, we apply these notions to define twisted $\A^1$-degrees of Wronski maps, which arise in the classical geometry of curves and in Schubert calculus. The \emph{Wronskian} of polynomials $f_1, \dots, f_d \in k[t]_{\leq d+m-1}$ is the determinant $\Wr(f_1, \ldots, f_d) = \det \big( \frac{\partial f_i}{\partial t^j} \big)$. Up to scalar, it depends only on the linear span of $f_1, \ldots, f_d$, and has degree at most $dm$, and so yields a map of varieties
\[\Wr : \Gr(d, d+m) \to \P^{dm}.\]
The Wronski map is finite and flat, but a straightforward calculation (\Cref{lem:wronski-relatively-oriented-relative-to-vertdomino}) shows that it need not be orientable; specifically, it is orientable if and only if $d$ or $m$ is odd. The Wronski map is particularly well-known in relation to the \emph{Shapiro--Shapiro conjecture}, one of several conjectures and theorems related to real Schubert calculus (see e.g.~\cite{sottileFrontiersRealitySchubert2009} for an overview). Most famous is the statement, proven by Mukhin--Tarasov--Varchenko \cite{MTV} (and previously by Eremenko--Gabrielov in the case $d=2$ \cite{eremenkoWronskiMapGrassmannians2001}), that if $f \in \mathbb{C}[t]_{\leq dm}$ is a polynomial with all real roots, then the fiber $\Wr^{-1}(f)$ consists entirely of real points. These results have been generalized in many directions, including to orthogonal Grassmannians \cite{Pur11, GLP23}, to results in algebraic combinatorics \cite{purbhooJeuTaquinMonodromy2010}, moduli spaces of curves \cite{Speyer-Schubert}, K-theory \cite{Lev17, GL17}, and positivity of solutions \cite{KMT25,karp2026universalpluckercoordinateswronski}, while other extensions remain open (see e.g. \cite{Sot00, HHMSZ15}). Related work also includes \cite{kharlamovMaximallyInflectedReal2003,HSZ17,cotterillArithmeticInflectionFormulae2023,cotterillArithmeticInflectionSuperelliptic2025}.

The most direct precursors to our paper are those focused on the \emph{degree} of the Wronski map. It is classically known that its complex (or algebraic) degree is the number of standard Young tableaux of rectangular shape $d \times m$, i.e. the degree of the Grassmannian in its Pl\"ucker embedding. In \cite{eremenkoDegreesRealWronski2002}, Eremenko--Gabrielov computed the topological Brouwer degree of the \emph{real} Wronski map, working over a Schubert cell, with its standard orientation. Since this number constitutes a lower bound for the number of real points in any fiber, one might hope to prove that the fibers are entirely real (i.e. the result of \cite{MTV} above) by establishing a sufficiently large lower bound. Unfortunately, the real degree turns out to be zero when $m+d$ is even, and is equal to a count of shifted standard Young tableaux of a certain shape when $m+d$ is odd -- this count is strictly smaller than the complex degree. Brazelton \cite{brazeltonEnrichedDegreeWronski2025} gave a partial quadratic enrichment of this result, computing the $\A^1$-degree of the Wronski map over a Schubert cell, and giving a formula for its local degrees under mild characteristic assumptions.

Recently, Levinson--Purbhoo showed that an approach to the Shapiro--Shapiro conjecture via topological degrees can be made to work by using a different choice of orientation \cite{LevinsonPurbhoo}. Specifically, one divides the real projective space into \emph{chambers} $P_\mu \subset \R\P^{dm}$, where $\mu = (2^s 1^r)$ is a partition and $P_\mu$ is the set of polynomials with $r$ real and $2s$ complex conjugate roots. One then computes the degree of the real Wronski map (over each chamber) with respect to a different orientation called the \emph{character orientation}. This new degree turns out to be the symmetric group character value $\chi^\rect(\mu)$, where $\rect$ denotes the $d\times m$ rectangle. When $\mu = (1^{dm})$, this bound exactly implies the Shapiro--Shapiro conjecture, while for other choices of $\mu$ it yields new information about Wronskians with mixed real and complex roots. A noteworthy feature of this construction is that, although it involves real topological degrees and real orientability, the character orientation is, at its core, algebraic: it is given by multiplying the ordinary orientation by (the sign of) the equation for a certain divisor, the \emph{domino variety} $\Zdomino \subset \Gr(d, d+m)$, one of two divisors that lie over the boundary between the chambers $P_\mu$.

This paper, specifically the idea of twisting an orientation by the equation of a divisor, emerged from seeking to combine the approaches of Levinson--Purbhoo and Brazelton to define analogous ``twisted'' degrees of the Wronski map over arbitrary fields. We prove, in the language introduced in the first part of the paper, that the Wronski map admits an orientation $\rhodomino$ relative to the domino variety $\Zdomino$ (\Cref{lem:wronski-relatively-oriented-relative-to-vertdomino}). We then argue that certain \emph{twisted root spaces} of Wronski polynomials yield a complete class of $\A^1$-chambers for the Wronski map relative to the domino variety (see \Cref{def:sigma-action} and \Cref{lem:sym-is-domino-chamber}). In particular, every polynomial $w(t) \in k[t]_{\leq dm}$ is in one of these chambers; which chamber it is in is determined by the Galois orbit type of the roots of $w(t)$.

We thereby deduce that there is a well-defined twisted $\A^1$-degree of the Wronski map, twisted by the equation $\sdomino$ of $\Zdomino$, whose value is independent of the choice of fiber, depending only upon the Galois orbit type of the roots of the Wronskian (and on the base field). Our result holds for all fields of characteristic $0$ or in sufficiently large positive characteristic.

\begin{theorem}[{See \Cref{thm:main}}] Let $k$ be a field of characteristic $0$ or $\ge md$. Let $w(t) \in k[t]_{\leq dm}$ be a polynomial, corresponding to a point $w \in \P^{dm}(k)$. Then the twisted degree of the Wronski map over $w$, $\deg^{\sdomino}(\Wr,\rhodomino)(w)$, depends only on the Galois orbit type of the roots of $w(t)$. 

More precisely, for every continuous Galois action $\sigma \colon \Gal(k^s/k) \to \Sigma_{dm}$, there is a well-defined \emph{twisted degree} of the Wronski map $\deg^\sigma(\Wr) \in \GW(k)$, such that if the roots of $w(t)$ have the orbit type of $\sigma$, then $\deg^{\sdomino}(\Wr,\rhodomino)(w) = \deg^\sigma(\Wr)$.
\end{theorem}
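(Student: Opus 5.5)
The plan is to reduce the statement to the fiber invariance theorem for twisted degrees on $\A^1$-chambers (\Cref{thm:fiber-invariance-twisted-degrees-on-chambers}). To do this we need three inputs: a $D$-orientation of $\Wr$, a family of $\A^1$-chambers covering all the relevant fibers, and a check that the residual points avoid $\Wr(D)$.

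\textbf{Orientation.} Take $D=\Zdomino$ with equation $\sdomino$. Take $\rho=\rhodomino$, the orientation of $\Wr$ relative to $\Zdomino$ supplied by \Cref{lem:wronski-relatively-oriented-relative-to-vertdomino}. This step uses the finiteness, flatness and lci property of $\Wr\colon \Gr(d,d+m)\to\P^{dm}$, which holds since $\Wr$ is a finite map between smooth varieties of equal dimension.

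\textbf{Chambers.} For each continuous $\sigma\colon\Gal(k^s/k)\to\Sigma_{dm}$, form the twisted root space $W_\sigma$ of \Cref{def:sigma-action}. Concretely, $W_\sigma$ is the Galois twist of $(\P^1)^{dm}$ by $\sigma$, which equals the Weil restriction of $\P^1$ along the étale algebra $L_\sigma$ determined by $\sigma$. It carries the multiplication map $g_\sigma\colon W_\sigma\to\P^{dm}$, which sends a tuple of roots to the polynomial having those roots. A polynomial $w(t)$ whose roots have orbit type $\sigma$ lies in $g_\sigma(W_\sigma(k))$, because its roots give a $k$-point of the twisted product. By \Cref{lem:sym-is-domino-chamber}, $g_\sigma$ is an $\A^1$-chamber of $\Wr$ relative to $\Zdomino$; this is the input where the hypothesis that the characteristic is $0$ or $\ge dm$ is needed. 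Applying \Cref{thm:fiber-invariance-twisted-degrees-on-chambers} then shows that any two such $k$-points $y_1,y_2$ of the chamber lying outside $\Wr(\Zdomino)$ have equal twisted degrees. Define $\deg^\sigma(\Wr)$ to be this common value. It depends only on the isomorphism class of $\sigma$, i.e. its conjugacy class, since conjugate $\sigma$ give isomorphic $W_\sigma$.

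\textbf{Boundary points and the main obstacle.} The delicate point is whether every $w$ of the given orbit type avoids $\Wr(\Zdomino)$, or whether the twisted degree must be interpreted at points of $\Wr(\Zdomino)$. The domino variety lies over polynomials with repeated roots. Hence, if the orbit type $\sigma$ encodes $dm$ distinct roots (so the roots form a torsor), $w$ lies in the complement of the discriminant. There $\Wr$ is étale and $\sdomino$ is invertible on the fiber, and \Cref{prop:twisted-degree-sum-of-local-degrees-intro} gives the value
\[
\sum_{x\in \Wr^{-1}(w)}\Tr_{k(x)/k}\langle \Jac(\Wr)(x)\,\sdomino(x)\rangle .
\]
If the theorem is meant to include polynomials with repeated roots, I would instead use the fact that the twisted degree is a section of $\sheafGW$ over the whole base. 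One then shows that $g_\sigma^*\deg^{\sdomino}(\Wr,\rhodomino)$ is a global section over the chamber. Any such section is constant in $\GW(k)$ on $W_\sigma(k)$ by $\A^1$-invariance, since $W_\sigma$ is $\A^1$-chain connected: it is a Weil restriction of $\P^1$, which is rational and covered by affine spaces. I expect verifying this extension across $\Wr(\Zdomino)$, i.e. that the chamber condition in \Cref{lem:sym-is-domino-chamber} genuinely controls the pullback there, to be the main obstacle. I would handle it by citing the definition of relative $\A^1$-chamber, \Cref{def:relative-A1-chamber}.
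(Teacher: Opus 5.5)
Your proposal is correct and is the same argument as the paper's: take $\rhodomino$ from \Cref{lem:wronski-relatively-oriented-relative-to-vertdomino}, use the twisted root spaces $\Sym_\sigma\colon(\P^1_k)^{dm}_\sigma\to\P^{dm}$ as $\A^1$-chambers relative to $\Zdomino$ via \Cref{lem:sym-is-domino-chamber}, and conclude by \Cref{thm:fiber-invariance-twisted-degrees-on-chambers}. Your closing digressions are unnecessary and partly inaccurate, though neither is used in the argument: the twisted degree is only a section over $\P^{dm}\setminus\Delta_{dm}$, so no repeated-root case arises, and $\Wr$ need not be \'etale over all of $\P^{dm}\setminus\Delta_{dm}$ (for $d=m=2$ it ramifies over $t^4+t$, which has distinct roots).
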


Over the real numbers, the sign of $\deg^\sigma(\Wr)$ recovers the Brouwer degree of the Wronski map computed with respect to the relative character orientation in \cite{LevinsonPurbhoo}. Over other fields, however, there are more Galois actions on the roots which are not visible over the reals. For example we have the following (see \Cref{prop:Gr24} and ensuing discussion):

\begin{example} Over any field $k$ of characteristic $\ne 2,3$, the twisted degree of the Wronski map for $d=m=2$ computed with the character orientation $\rhodomino$ is
\[
    \deg^{\sdomino}(\Wr,\rhodomino) = \langle 3,f_{\Delta}\rangle \text{ in } \sheafGW(\mathbb{P}^4\setminus \Delta),
\]
where $f_\Delta$ is the standard equation for the discriminant of the Wronski polynomial. The square class of $\Delta$ depends upon the Wronski polynomial over which we carry out this computation, but only depends upon the Galois orbits of its roots by our main theorem. Over a finite field $\F_q$ of characteristic $\ge5$ for instance, the data of a Galois action on the four roots is the same as a partition of the roots, so we obtain the following twisted degrees:
\begin{center}
    \begin{tabular}{|r | c c c c c|}
        \hline
        $\sigma$ & $1^4$ & $2^1 1^2$ & $2^2$ & $3^1 1^1$ & $4^1$ \\
        \hline
        $\deg^{\sigma}(\Wr)$ & $\langle 1,3\rangle$ & $\langle 1,3u\rangle$ & $\langle 1,3\rangle$ & $\langle 1,3\rangle$ & $\langle 1,3u \rangle$\\
        \hline
    \end{tabular}
\end{center}
where $u\in \mathbb{F}_q$ is any non-square class.
\end{example}

It would be interesting to know $\deg^\sigma(\Wr)$ for general $d, m$ and $\sigma$, over an arbitrary field $k$.

\subsection{Acknowledgements} We would like to thank Kirsten Wickelgren for helpful conversations about this work. The first author would like to thank Nidhi Gupta and Anand Sawant for discussions about $\A^1$-connectivity. The second author thanks Nic Ford for a helpful conversation related to \Cref{ex:twisted-coordinate-ring}. Discussions with Claude were helpful for generalizing our lemmas on the basic properties of Wronski maps and domino varieties to positive characteristic, in particular \Cref{prop:Wronski-well-defined} and the last step of \Cref{prop:kempf-laksov-degree-calculation}. Claude also directed us to Laksov's work on the Wronski map in arbitrary characteristic using Hasse--Schmidt derivatives \cite{Laksov}. TB was partially supported by NSF Grant DMS-2303242. JL was partially supported by NSERC DG RGPIN-2021-04169.

\subsection{Code availability} The computations in \Cref{sec:examples} were done with the assistance of Mathematica and SageMath. The authors are happy to share this code upon request.

%% file: sections/background_a1.tex
\section{Background on \texorpdfstring{$\A^1$}{A1}-enumerative geometry}

Here we recap the theory of Grothendieck--Witt sheaves of non-degenerate symmetric bilinear forms on schemes, and how to produce $\A^1$-Brouwer degrees of finite flat lci oriented morphisms.

\subsection{The Grothendieck--Witt sheaf}

We recall the \emph{Grothendieck--Witt} sheaf of a qcqs scheme, first defined by Morel for smooth varieties over a field \cite{Morel}, which encodes isomorphism classes of families of quadratic forms.

\begin{notation} \label{notation:kxy}
Throughout, $k$ denotes a field and $X, Y$ are qcqs schemes over $k$.
\end{notation}

Consider the groupoid whose objects are pairs $(V,\beta)$ where $V$ is a locally free sheaf of $\mathcal{O}_Y$-modules of finite rank, and 
\begin{equation} \label{eqn:bilinear-form}
\beta \colon V\times V \to \mathcal{O}_Y
\end{equation}
is a nondegenerate symmetric bilinear form. This data defines a Nisnevich stack on qcqs schemes (in fact on qcqs algebraic spaces) by work of many authors \cite{Morel,Schlichting-exact,HoyoisLand}. We take the groupoid completion with respect to direct sum, obtaining a Nisnevich sheaf of abelian groups
\[
\sheafGW \colon \Sch_{\text{qcqs}}^\op \to \Ab,
\]
called the Grothendieck--Witt sheaf. In particular, the pairing $\beta$ determines a section $\beta \in \sheafGW(Y)$, and direct sums $(V \oplus V', \beta \oplus \beta')$ correspond to sums $\beta+\beta' \in \sheafGW(Y)$. Conversely, any section can be represented locally by a formal sum of nondegenerate symmetric bilinear forms as in \Cref{eqn:bilinear-form} on open subsets $U \subset Y$. The Grothendieck--Witt sheaf is unramified in the sense of \cite{Ojanguren-Panin}, meaning it is insensitive to the deletion of a subspace of codimension $\ge 2$ when the ambient scheme is smooth: if $Y$ is smooth over $k$ and $Z \subset Y$ has codimension $\geq 2$, then the restriction map
\[\sheafGW(Y) \to \sheafGW(Y \setminus Z)\]
is an isomorphism. 

The value of $\sheafGW$ on the spectrum of a field recovers the classical Grothendieck--Witt ring:

\begin{definition}\label{def:GW} We denote by $\GW(k):= \sheafGW(\Spec(k))$ the \emph{Grothendieck--Witt ring} of the field $k$. It is isomorphic to the group completion of the semiring of isomorphism classes of non-degenerate symmetric bilinear forms over $k$.
\end{definition}

\begin{notation} Given a unit $a \in k^\times$, we denote by $\langle a \rangle$ the bilinear form defined by
\begin{align*}
    \langle a \rangle \colon k \times k &\to k \\
(x,y) &\mapsto axy
\end{align*}
These form additive generators for $\GW(k)$. We note that $\langle ab^2 \rangle = \langle a \rangle$ whenever $a,b \in k^\times$, so that $\GW(k)$ is in fact generated by the classes of the elements of $k^\times / k^{\times 2}$. We will make use of the following notation for a sum of elements, and for the \emph{hyperbolic form}, respectively:
\begin{align*}
    \langle a_1,\ldots,a_n\rangle &:= \langle a_1 \rangle + \ldots \langle a_n \rangle \\
    h &:= \langle 1,-1 \rangle.
\end{align*}
\end{notation}
\begin{remark}
When $2$ is inverted over a field (or more generally over a suitably nice ring, see \cite{milnorSymmetricBilinearForms1973}), the Grothendieck--Witt ring captures the theory of quadratic forms over the ring. We refer the reader to \cite{Lam-quad} for more background on this and for general facts about the Grothendieck--Witt ring of a field.
\end{remark}

\begin{notation}\label{notation:restriction-on-GW}
Let $L/k$ be a field extension. The map $p\colon\Spec(L) \to \Spec(k)$ induces a natural homomorphism
\[
p^\ast \colon \GW(k) \to \GW(L).
\]
Explicitly, given the form $\beta : V \times V \to k$, we tensor up to $L$ to obtain a form $(V\otimes_k L) \times (V \otimes_k L) \to L$.
\end{notation}

\begin{notation} Given any invertible $\mathcal{O}_X$-module $\mathcal{L}$, we can define the twisted Grothendieck--Witt sheaf $\sheafGW(X,\mathcal{L})$ from isomorphism classes of $\mathcal{L}$-valued non-degenerate symmetric bilinear forms
\[
V\times V \to \mathcal{L}.
\]
The tensor product of forms induces a natural pairing for any line bundles $\mathcal{L}_i$:
\[
\sheafGW(X,\mathcal{L}_1) \times \sheafGW(X,\mathcal{L}_2) \to \sheafGW(X,\mathcal{L}_1 \otimes \mathcal{L}_2).
\]
\end{notation}

\begin{remark}\label{rmk:GW-twist-by-squares}
For any line bundles $\mathcal{L}_1$ and $\mathcal{L}_2$, twisting $V$ by $\mathcal{L}_2$ yields a \emph{canonical} isomorphism $\sheafGW(X,\mathcal{L}_1) \cong \sheafGW(X,\mathcal{L}_1\otimes \mathcal{L}_2^{\otimes 2})$. Hence we might talk about equality of two forms, where one form lives in one group and one in another, and this is well-defined. As a subtlety to be aware of: if we know only that two line bundles $\mathcal{L}$ and $\mathcal{L}'$ define the same class in $\Pic(X)/2$ then $\sheafGW(X,\mathcal{L})$ and $\sheafGW(X,\mathcal{L}')$ are isomorphic, but \emph{not canonically so}. To exhibit an isomorphism one must pick a square root line bundle (i.e. a choice of $\mathcal{L}_2$ above) -- this is called an \emph{alignment} in \cite{BalmerCalmes}.
\end{remark}

\subsection{Duality and transfer} A morphism of schemes $f \colon X \to Y$ induces a pullback on global sections $\sheafGW(Y) \to \sheafGW(X)$. We now recall how, for a sufficiently nice morphism, we can push forward twisted forms from $X$ back to $Y$.

\begin{notation}\label{notation:f}
For the remainder of this section, continuing \Cref{notation:kxy}, $f : X \to Y$ denotes a finite flat lci (locally complete intersection) morphism.
\end{notation}

Under the assumptions of \Cref{notation:f}, we can construct a line bundle $\omega_f$ on $X$ as the determinant of the cotangent complex of the morphism $f$. Grothendieck-Serre duality supplies us with a well-defined \emph{trace} between the proper pushforward of this complex and the structure sheaf on $Y$ (see e.g. \cite[Appendix~B]{bachmannEulerClassesSixfunctors2023}):
\begin{equation}\label{eqn:trace}
    f_\ast \omega_f \xto{\Tr} \mathcal{O}_Y.
\end{equation}
Given any symmetric bilinear form $V \times V \to \omega_f$ over $X$, we can apply $f_\ast$ and post-compose with the trace to obtain a symmetric bilinear form over $Y$ (note that $f_\ast V$ is still a locally free sheaf on $Y$ since $f$ is finite and flat). This operation preserves non-degenerate forms (c.f.~\cite[2.5]{KLSW-count}), therefore defines a pushforward map
\begin{equation}\label{eqn:GW-pushforward}
    f_\ast \colon \sheafGW(X,\omega_f) \to \sheafGW(Y).
\end{equation}

\begin{example} \label{ex:trace}
Let $L/k$ be a finite separable field extension. The associated cotangent complex is trivial, so we obtain the field trace $\Tr_{L/k} : \GW(L) \to \GW(k)$.
\end{example}

\subsection{Orientations and degrees}

We now recall the definition of $\mathbb{A}^1$-degree.

\begin{terminology}\label{term:oriented-orientation-orientation-class}
    Following \cite[2.2]{KLSW-count}\footnote{Our convention for $\rho$ is inverse from theirs, but the definition is essentially the same.}, we define an \emph{orientation} of $f$ to be a choice of line bundle $\mathcal{L} \to X$ and an isomorphism $\rho \colon \mathcal{L}^{\otimes 2} \xto{\sim} \omega_f$. We say $f$ is \emph{orientable} if an orientation exists, i.e. if $\omega_f$ is isomorphic to the square of a line bundle, and \emph{oriented} if we have fixed an orientation. We may likewise think of $\rho$ as a pairing $\mathcal{L} \times \mathcal{L} \to \omega_f$; we call the corresponding section $\rho \in \sheafGW(X, \omega_f)$ an \emph{orientation class} (of $f$).
\end{terminology}

 Putting this all together, we have:

\begin{definition}[{\cite[2.4]{KLSW-count}}]
\label{def:degree}
Let $\rho$ be an orientation of $f$. We define the ($\mathbb{A}^1$-)\emph{degree} of $f$ to be the class
\begin{align*}
    \deg(f,\rho) := f_\ast \rho \in \sheafGW(Y),
\end{align*}
where $f_\ast$ is the pushforward from \Cref{eqn:GW-pushforward} and $\rho$ is the orientation class from \Cref{term:oriented-orientation-orientation-class}.
\end{definition}

This degree resembles the classical Brouwer degree in the following sense. Given a closed point $y\colon \Spec(k(y)) \to Y$, we can pull back $\deg(f,\rho) \in \sheafGW(Y)$ to obtain a class we denote by $\deg(f,\rho)(y)$ in $\GW(k(y))$, which is a quadratic form. If $f$ is \'etale over $y$, then the degree is obtained by counting points along the fiber, weighted by the square class of the Jacobian of $f$ at $x$, as in the following proposition. This is analogous to the classical Brouwer degree, where we count points weighted by the sign of the Jacobian (over $\mathbb{R}$).
\begin{proposition}[{\cite[3.10]{KLSW-count}}]\label{prop:sum-of-local-degrees}
Let $\rho$ be an orientation of $f$. Then for any $y\in Y$ over which $f$ is \'etale, we have an equality in $\GW(k(y))$ of the form
\begin{equation} \label{eqn:a1deg-jacobian}
    \deg(f,\rho)(y) = \sum_{x\in f^{-1}(y)}\Tr_{k(x)/k(y)} \langle \Jac(f)(x) \rangle.
\end{equation}
The summand for $x$ is called the \emph{local degree} of $f$ at $x$ (with respect to $\rho$). Note that the Jacobian is computed in charts compatible with the orientation $\rho$ (for further details, see \cite{KLSW-count}).
\end{proposition}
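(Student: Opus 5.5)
The statement is the cited result \cite[3.10]{KLSW-count}. My plan is to reduce to the fiber over $y$ and compute the pushed-forward form there explicitly.

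\emph{Step 1 (base change).} The pushforward $f_\ast$ of \Cref{eqn:GW-pushforward} is built from $f_\ast$ of a locally free sheaf together with the trace \Cref{eqn:trace}. Both are compatible with flat base change, and since $f$ is finite flat lci, also with arbitrary base change along $y\colon \Spec k(y)\to Y$. Likewise $\omega_f$ restricts to $\omega_{f_y}$ for $f_y\colon X_y\to \Spec k(y)$. So $\deg(f,\rho)(y)=(f_y)_\ast(\rho|_{X_y})$.

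\emph{Step 2 (splitting the fiber).} Since $f$ is étale over $y$, we have $X_y=\coprod_{x\in f^{-1}(y)}\Spec k(x)$ with each $k(x)/k(y)$ finite separable. The pushforward is additive over the disjoint union. On each factor, $(f_y)_\ast$ is the composite of two operations. The first is the canonical identification of $\omega_{f_y}|_x$ with $\omega_{k(x)/k(y)}\cong k(x)$, coming from the étale-ness. The second is the field trace of \Cref{ex:trace}.

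\emph{Step 3 (comparison).} This step is the main obstacle. Choose local coordinates near $x$ in which $\rho$ is given by a local generator $\ell$ of $\mathcal{L}$, with $\rho(\ell\otimes\ell)$ equal to the distinguished generator of $\omega_f$ determined by the charts. Concretely, present $X$ locally as a complete intersection in affine space over $Y$, with defining equations $g_i$. Then $\omega_f$ is generated by $dx_1\wedge\cdots\wedge dx_n\otimes(dg_1\wedge\cdots\wedge dg_n)^{\vee}$, together with pulled-back coordinates on $Y$. The compatible charts in the proposition are exactly those in which $\rho(\ell\otimes\ell)$ is this generator.

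Restricted to the étale point $x$, this generator corresponds under the canonical trivialization $\omega_{k(x)/k(y)}\cong k(x)$ to $\Jac(f)(x)^{\pm1}$, that is, the determinant of $\partial g_i/\partial x_j$. This is the standard identification of the fundamental local isomorphism with the Jacobian at étale points. The sign of the exponent is irrelevant modulo squares.

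Hence $\rho|_x$ is the rank-one form $\langle \Jac(f)(x)\rangle$ on $k(x)$. Applying $\Tr_{k(x)/k(y)}$ and summing over $x$ gives \Cref{eqn:a1deg-jacobian}.

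\emph{Remaining check.} Independence of choices follows because changing $\ell$ to $u\ell$ scales the form by $u^2$. Changing the compatible chart does the same, since compatibility with $\rho$ forces the change-of-coordinates Jacobian to be a square.
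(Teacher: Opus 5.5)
Your argument is correct and follows essentially the same route as the paper. The paper gives no proof of its own here; it imports \cite[3.10]{KLSW-count}. Its sketch for the twisted analogue (\Cref{prop:twisted-degree-sum-of-local-degrees}) pulls back to the fiber, splits $\deg(f,\rho)(y)$ into local contributions over the points of $f^{-1}(y)$ as in \cite[Definition~3.1]{KLSW-count}, and evaluates each via \cite[3.5, 3.8]{KLSW-count}; at an \'etale point this is exactly your comparison of the chart generator with the canonical trivialization of $\omega_f$. The one step you treat as standard is that $dx_1\wedge\cdots\wedge dx_n\otimes(dg_1\wedge\cdots\wedge dg_n)^\vee$ goes to $\det(\partial g_i/\partial x_j)(x)^{\pm1}$ with no stray overall sign (which, unlike the exponent, would matter in $\GW$); that is precisely what the notion of charts compatible with $\rho$ in \cite{KLSW-count} is set up to guarantee.
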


We note that $\mathbb{A}^1$-degrees pull back along flat maps:

\begin{proposition}[{\cite[2.8]{KLSW-count}}]\label{prop:a1deg-flat-pullback}
\label{prop:pullback-of-deg} Let $\rho$ be an orientation of $f$. Let $g\colon Y' \to Y$ be flat and let $f'$ and $g'$ be defined by the pullback diagram
\[\begin{tikzcd}
    X'\rar["g'"]\dar["f'" left] & X\dar["f" right] \\
    Y'\rar["g" below] & Y.
\end{tikzcd}\]
Then $f'$ is finite flat lci and oriented by $g'^*\rho$, and $\deg(f',g^{\prime\ast}\rho) = g^\ast \deg(f,\rho)$ in $\sheafGW(Y')$.
\end{proposition}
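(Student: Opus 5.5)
The statement to prove is the flat base change property, \Cref{prop:pullback-of-deg}. The plan is to reduce it to two compatibilities: the relative dualizing line bundle with flat base change, and the Grothendieck--Serre trace of \Cref{eqn:trace} with flat base change.

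First I would check that $f'$ is finite flat lci. Finiteness and flatness are stable under arbitrary base change. For the lci condition, I would use that for $g$ flat the cotangent complex satisfies $L_{X'/Y'} \simeq g'^*L_{X/Y}$. Since $g$ is flat, $X' = X\times_Y Y'$ is the derived fiber product, so this base change holds on the nose. Hence $L_{X'/Y'}$ is perfect of Tor-amplitude $[-1,0]$, which gives the lci property. Taking determinants then yields a canonical isomorphism $\omega_{f'} \cong g'^*\omega_f$. Pulling back $\rho\colon \mathcal{L}^{\otimes 2}\xrightarrow{\sim}\omega_f$ gives $g'^*\rho\colon (g'^*\mathcal{L})^{\otimes 2}\xrightarrow{\sim} g'^*\omega_f\cong \omega_{f'}$, which is an orientation of $f'$.

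Next comes the key step, and the main obstacle: compatibility of the trace with flat base change. Since $f$ is affine, there is a base change isomorphism $g^*f_*\mathcal{F}\cong f'_*g'^*\mathcal{F}$ for quasicoherent $\mathcal{F}$. I need the diagram
\[
g^*f_*\omega_f \cong f'_*g'^*\omega_f \cong f'_*\omega_{f'} \xrightarrow{\Tr_{f'}} \mathcal{O}_{Y'}
\]
to agree with $g^*\Tr_f$. I would argue this locally on $Y$, where $f$ is given by a finite flat lci algebra $A = B[x_1,\dots,x_n]/(h_1,\dots,h_n)$ over $B$. There $\omega_f$ is trivialized by the Jacobian determinant, and the trace is the explicit Scheja--Storch or Tate trace, which is defined by formulas in the coefficients of the $h_i$. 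It is therefore manifestly compatible with any base change $B\to B'$. Alternatively, I could cite the base change property of the trace in \cite[Appendix~B]{bachmannEulerClassesSixfunctors2023}. The real care is needed in checking that the determinant-of-cotangent-complex identification matches the one used to define the trace.

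Finally, I would conclude at the level of forms. Represent $\rho$ by the pairing $\mathcal{L}\times\mathcal{L}\to\omega_f$. Then $\deg(f,\rho)$ is the form $f_*\mathcal{L}\times f_*\mathcal{L}\to f_*\omega_f\xrightarrow{\Tr}\mathcal{O}_Y$, and $g^*$ of this is the form on $g^*f_*\mathcal{L}\cong f'_*g'^*\mathcal{L}$ obtained by composing with $g^*\Tr$. By the previous step, this is exactly the form $f'_*(g'^*\rho)$ composed with $\Tr_{f'}$, so the two forms are isometric. Both classes are defined globally by these forms, so they agree in $\sheafGW(Y')$.
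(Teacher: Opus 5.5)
Your proof is correct, and it is the standard argument behind \cite[2.8]{KLSW-count}, which the paper cites without giving a proof of its own. The steps are: base change of the cotangent complex gives $\omega_{f'}\cong g'^*\omega_f$, the trace is compatible with base change, and the affine base change isomorphism $g^*f_*\cong f'_*g'^*$ then identifies the two forms.

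Two small streamlinings are available. The lci claim follows at once from the fact that syntomic (flat lci) morphisms are stable under base change, so no cotangent-complex criterion for lci is needed. Also, flatness of $f$ alone already makes the square Tor-independent, so your argument never actually uses flatness of $g$.
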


\subsubsection{Choices of orientation}

It is common in the literature to suppress $\rho$ and simply write $\deg(f)$ when the orientation is implicitly understood. It is important, though, to note that degree does depend intrinsically on the choice of orientation $\rho$, as the following example illuminates:

\begin{example}\label{exa:rho-matters}
Consider the identity map $\id \colon \Spec(k) \to \Spec(k)$. Its cotangent complex is trivial of rank one, as is any line bundle, so a choice of isomorphism $\rho$ is a choice of multiplication by a scalar $u\in k^\times$. For such a $\rho$, call it $\rho_u$ to remember the scalar, the degree $\deg(f,\rho_u) \in \GW(k)$ is the composite
\begin{align*}
    k \times k & \to k \\
    (x,y) &\mapsto uxy,
\end{align*}
which is the class $\langle u\rangle$. This clearly can change depending upon the square class of $u$.
\end{example}

In special cases, the choice of orientation can safely be suppressed, as it does not severely affect the resulting computation:

\begin{example}\label{ex:proper-geometrically-connected-deg-independent-of-orientation}
Suppose $X$ has $H^0(\mathcal{O}_X^\times) = k^\times$ (e.g. $X$ proper and geometrically connected) and $\Pic(X)[2]=0$. Then any two orientations $\rho_1 : \mathcal{L}_1^{\otimes 2} \to \omega_f$ and $\rho_2 : \mathcal{L}_2^{\otimes 2} \to \omega_f$ of $f$ are related by a scalar $u \in k^\times$, and in particular
\begin{align*}
    \deg(f,\rho_1) = \langle u \rangle \deg(f,\rho_2).
\end{align*}
To see this: since $\Pic(X)[2] = 0$, any two square roots of $\omega_f$ are isomorphic, so there is an isomorphism $\phi \colon \mathcal{L}_1 \xto{\sim} \mathcal{L}_2$. The composite
\begin{align*}
    \omega_f \xto{\rho_1^{-1}} \mathcal{L}_1^{\otimes 2} \xto{\phi^{\otimes 2}}\mathcal{L}_2^{\otimes 2} \xto{\rho_2}\omega_f
\end{align*}
is an automorphism of $\omega_f$, hence is multiplication by a scalar $u \in H^0(\mathcal{O}_X^\times) = k^\times$. Thus the two degrees differ by multiplication by the class $\langle u\rangle$.
\end{example}

However, in general and in our application to Wronski maps, different orientations can change the degree by much more than multiplication by a scalar.

\begin{example} Let $X=Y = \mathbb{G}_m = \Spec(k[t,t^{-1}])$, and let $f\colon X \to Y$ be the squaring map sending $t\mapsto t^2$. The square classes of global units are $k^\times / (k^\times)^2 \times \Z/2$, where the extra cyclic group of order two comes from whether the power on $t$ is even or odd. We compute the degree using \Cref{prop:sum-of-local-degrees} with $y = 1$. Letting $\rho \colon \mathcal{O}_X \cong \mathcal{O}_X^{\otimes 2}$ send $1 \mapsto 1 \otimes 1$, the local degree is $\langle 1 \rangle$ at $x=1$ and is $\langle -1 \rangle$ at $x = -1$, so
\[\deg(f,\rho)(1) = \langle 1,-1\rangle\]
is the hyperbolic form. If we instead take $\rho'$ to be the isomorphism sending $1\mapsto 1\otimes t$, the local degree at $x = 1$ does not change, while the local degree at $x = -1$ changes from to $\langle 1\rangle$, so
\[\deg(f, \rho')(1) = 2 \langle 1 \rangle.\]
Indeed, as sections of $\sheafGW(Y)$ we have $\deg(f, \rho) = \langle 1, -1 \rangle$ and $\deg(f,\rho') = \langle 1,t\rangle \in \sheafGW(\mathbb{G}_m)$. We may also observe that $\deg(f, \rho')(y)$ depends on $y$.
\end{example}

\subsection{\texorpdfstring{$\mathbb{A}^1$}{A1}-connectedness} Given a class in $\sheafGW(Y)$ (e.g. a degree, as above), we can take a closed point $y\in Y$ and pull back along the inclusion $\Spec(k(y)) \to Y$ to obtain a class in $\GW(k(y))$. One can ask: when is this form independent of the choice of closed point?

Harder's theorem \cite[VII]{Lam-SerresProblem} states that non-degenerate symmetric bilinear forms over $k[t]$ are always extended from forms over $k$. This leads to the observation that if two points in $Y$ with the same residue field $L$ can be connected along an affine line $\A^1_L$, then the pullback of the form in $\sheafGW(Y)$ to either of the points will yield the same class in $\GW(L)$. To that end, we recall the following notion from $\A^1$-homotopy theory:

\begin{definition}[{\cite{AsokMorel}}]
Let $Y$ be a smooth $k$-variety. We say that $Y$ is \emph{$\A^1$-chain connected} if, for every finitely generated separable extension $L/k$ and every pair of points $x,y\in Y(L)$, there is some $n$ and some sequence of points $x=x_0,x_1,\ldots, x_n=y$ and morphisms
\[
    g_i \colon \A^1_L \to Y_L
\]
so that $g_i(0) = x_i$ and $g_i(1) = x_{i+1}$ for all $i\ge 1$.
\end{definition}

\begin{proposition}\label{prop:a1connected-constancy} If $Y$ is an $\A^1$-chain connected $k$-variety, then we have a canonical isomorphism $\GW(k) \xto{\sim} \sheafGW(Y)$, induced by the structure morphism $Y \to \Spec(k)$. In particular, for any $\beta \in \sheafGW(Y)$, there is a unique class $\overline{\beta}\in \GW(k)$, such that:
\begin{enumerate}
    \item For any finite field extension $L/k$ and any $L$-point $y\in Y$, we have that $\beta(y) = p^\ast \overline{\beta}$ in $\GW(L)$, where $p$ is the pullback (\Cref{notation:restriction-on-GW}).
    \item If $\rho$ is an orientation of $f\colon X \to Y$, then the sum of local degrees in \Cref{eqn:a1deg-jacobian} is always equal to $p^*\overline{\beta}$. In particular, it is independent of the choice of fiber.
\end{enumerate}
As a special case of (1), for any $k$-point $y\in Y$, we have $\beta(y) = \overline{\beta}$ in $\GW(k)$.
\end{proposition}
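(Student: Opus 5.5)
The plan is to deduce everything from homotopy invariance of $\sheafGW$ (Harder's theorem), applied along the chains of affine lines that $\A^1$-chain connectivity provides. Fix $\beta\in\sheafGW(Y)$. First, I will show the structure map $\pi\colon Y\to\Spec(k)$ induces an isomorphism $\pi^*\colon\GW(k)\to\sheafGW(Y)$.

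\emph{Injectivity.} Choose any point $y$ of $Y$ with residue field $L$; this gives a section of $Y_L\to\Spec L$. If $\pi^*\alpha=0$, then $p^*\alpha=y^*\pi^*\alpha=0$ in $\GW(L)$. To conclude $\alpha=0$ I need a point with $p^*$ injective. If $Y(k)\neq\emptyset$ this is immediate, since a section $s$ gives $s^*\pi^*=\id$. In general I would argue generically: $\GW(k)\to\GW(k(Y))$ is injective for the function field of a geometrically integral variety. This holds because $k$ is separably closed in $k(Y)$, so $k(Y)$ is a purely transcendental extension followed by a suitable finite step, and purely transcendental extensions are injective on $\GW$ by a specialization argument.

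\emph{Surjectivity.} Since $\sheafGW$ is unramified (\cite{Ojanguren-Panin}), $\sheafGW(Y)$ embeds into $\GW(k(Y))$. So it suffices to show that $\beta_\eta$, the restriction of $\beta$ to the generic point $\eta$ with $L=k(Y)$, lies in the image of $\GW(k)$. Here $\A^1$-chain connectivity enters. The generic point $\eta\in Y(L)$ is joined by a chain of maps $g_i\colon\A^1_L\to Y_L$ to a point coming from $k$, or more precisely to a point pulled back from a smaller field. Harder's theorem says $\GW(L)\to\GW(L[t])$ is an isomorphism. Hence $g_i^*\beta$ is constant, and $\beta(x_i)=\beta(x_{i+1})$ in $\GW(L)$. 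Iterating along the chain identifies $\beta_\eta$ with $\beta$ at a point from which the pullback property can be transported. The cleanest form of the argument compares the two points $\eta$ and $\pi_L^*(\text{pt})$, that is, the diagonal point and a constant point of $Y(k(Y))$. I expect making this step rigorous without a rational point to be the main obstacle. If needed I would take the standard route of \cite{AsokMorel}: $\A^1$-chain connectivity gives $\pi_0^{\A^1}(Y)=\ast$ on fields, and strictly $\A^1$-invariant unramified sheaves are then constant, $\sheafGW(Y)=\sheafGW(k)$.

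With the isomorphism in hand, set $\overline\beta=(\pi^*)^{-1}\beta$. For (1), an $L$-point $y$ satisfies $\pi\circ y=p$, so $\beta(y)=y^*\pi^*\overline\beta=p^*\overline\beta$. For (2), apply (1) to $\beta=\deg(f,\rho)$ and invoke \Cref{prop:sum-of-local-degrees}: at an \'etale point the local-degree sum equals $\deg(f,\rho)(y)=p^*\overline\beta$, which is independent of the fiber. The special case $L=k$ gives $p=\id$.
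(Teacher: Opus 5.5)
Your main line is the argument the paper has in mind: Harder's theorem applied along chains of affine lines, together with injectivity of $\sheafGW(Y)\to\GW(k(Y))$ from unramifiedness. The paper gives no separate proof, only the Harder remark just before the statement. The problem is that you treat $Y(k)=\emptyset$ as a live case, and the fallback you offer there is false. The map $\GW(k)\to\GW(k(Y))$ need not be injective for geometrically integral $Y$.

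A counterexample is the smooth conic $C\colon x^2+y^2+z^2=0$ over $\R$. In $\R(C)$, $-1$ is a sum of two squares, so $\langle 1,1\rangle\cong\langle -1,-1\rangle$ there. Hence $2\langle 1\rangle-2\langle -1\rangle$, which has signature $4$ in $\GW(\R)$, dies in $\GW(\R(C))$. The flaw in your justification is that the finite step after a purely transcendental subextension can have even degree, where no injectivity theorem applies.

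The missing observation is that $\A^1$-chain connectivity, in the sense of Asok--Morel, means $Y(L)/\!\sim$ is a single point for every finitely generated separable $L/k$. Taking $L=k$ gives a rational point $y_0$. This is genuinely needed: read vacuously, the conic $C$ above would count as chain connected, and the proposition would fail. The paper uses this convention when it says non-split Severi--Brauer varieties are not $\A^1$-connected.

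With $y_0$ in hand, both halves close without the $\pi_0^{\A^1}$ machinery.
\begin{itemize}
\item \emph{Injectivity:} the section $y_0$ gives $y_0^*\pi^*=\mathrm{id}$.
\item \emph{Surjectivity:} use exactly the comparison you identified. Chain-connect the generic point $\eta\in Y(k(Y))$ to $y_0$ viewed in $Y(k(Y))$; this is legitimate since $Y$ is smooth, so $k(Y)/k$ is finitely generated separable. Harder's theorem gives $\beta_\eta=p^*\beta(y_0)$. Then $\beta=\pi^*\beta(y_0)$ by injectivity of $\sheafGW(Y)\to\GW(k(Y))$, which holds because $Y$ is smooth and, being chain connected, connected.
\end{itemize}
Parts (1) and (2) then follow as you wrote.
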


There is a weaker notion of a variety being $\mathbb{A}^1$-\emph{connected}, which is sufficient to extract a class in $\GW(k)$ from a class in $\sheafGW(Y)$ (see e.g. \cite[2.35]{KLSW-count}). Every $\A^1$-chain connected variety is $\A^1$-connected, but the converse doesn't hold in general \cite{AsokMorel}. For the purposes of this paper, we only need to work with $\A^1$-chain connectivity to state our results. The following slight weakening will also be useful:

\begin{definition}
We say $Y$ is \emph{essentially $\A^1$-connected} if there exists a smooth $\A^1$-connected variety $\overline{Y}$ and an open immersion $Y \hookto \overline{Y}$ such that $\overline{Y} \setminus Y$ has codimension $2$ in $\overline{Y}$.
\footnote{Essentially $\A^1$-connected varieties over infinite fields are expected to be themselves $\A^1$-connected, although this is unknown at the time of writing.}
\end{definition}

The analogue of \Cref{prop:a1connected-constancy} holds for essentially $\A^1$-connected varieties because
\[\sheafGW(\overline{Y}) \xto{\sim} \sheafGW(Y)\]
is an isomorphism by unramifiedness of $\sheafGW$. We also note that the property of being essentially $\A^1$-connected is preserved under deletion of loci of codimension $\geq 2$, whereas $\A^1$-chain connectedness need not be preserved \cite[2.16]{AD-excision}.

\begin{example}[Examples of $\A^1$-connected $k$-schemes]\label{ex:a1-connected-schemes} Projective space, Grassmannians, generalized flag varieties, smooth proper toric varieties, and the moduli space $\overline{M}_{0, n}$ are all $\A^1$-chain connected.
\end{example}

We have the following example of a smooth variety which fails to be $\A^1$-chain connected.

\begin{example} The moduli space $M_{0,n}$ is not $\A^1$-chain connected for $n\ge4$.
\end{example}
\begin{proof}
For $n=4$, any morphism $g \colon \A^1 \to M_{0,4}$ extends to a map $\overline{g} \colon \P^1 \to \overline{M}_{0,4}$, since $\P^1$ is a smooth curve and $\overline{M}_{0,4}$ is proper. If $g$ is nonconstant, then $g$ is dominant, hence $\overline{g}$ is surjective, so at least three points on $\P^1$ map to the three boundary points of $\overline{M}_{0,4}$, hence at least two points on $\A^1$ map to the boundary, which is a contradiction. Thus any such $g$ is constant. For general $n \geq 4$, we use the natural embedding $\varphi \colon \overline{M}_{0, n} \hookrightarrow (\overline{M}_{0, 4})^{\binom{n}{4}}$ via the product of the all possible projection maps $\overline{M}_{0,n} \to \overline{M}_{0,4}$ remembering four marked points. Any map $g \colon \mathbb{A}^1 \to M_{0, n}$ extends to a map $\overline{g} \colon \P^1 \to \overline{M}_{0,n}$. By the observations above, $\varphi \circ \overline{g}$ is constant along each factor, hence $\varphi \circ \overline{g}$ is constant. Since $\varphi$ is an embedding, it follows that $\overline{g}$, and therefore $g$, are likewise constant.
\end{proof}

%% file: sections/twisted_degrees.tex
\section{Twisted degrees and \texorpdfstring{$\A^1$}{A1}-chambers}

We now define `twisted' $\A^1$-degrees for maps $f : X \to Y$ under weakened orientability and $\A^1$-connectivity hypotheses. We state versions of \Cref{term:oriented-orientation-orientation-class} and \Cref{def:degree} (see \Cref{def:orientation-relative-to-divisor} and \Cref{def:twisted-degree}), the Jacobian formula \Cref{prop:sum-of-local-degrees} (see \Cref{prop:twisted-degree-sum-of-local-degrees}), flat pullback \Cref{prop:pullback-of-deg} (\Cref{prop:pullback-of-twisted-deg}) and fiber invariance \Cref{prop:a1connected-constancy} (see \Cref{prop:degs-same-across-a1-chamber} and \Cref{thm:fiber-invariance-twisted-degrees-on-chambers}).

Before these results, we first discuss two closely-related ways of twisting Grothendieck--Witt classes: by units up to squares, and by sections of line bundles which are associated to divisors. We then define twisted $\A^1$-degrees for maps $f$ that are not necessarily orientable in the sense of \Cref{term:oriented-orientation-orientation-class}. Finally, we introduce the notion of $\A^1$-chamber, for the case where $Y$ is not $\A^1$-connected (but is perhaps `close' to being $\A^1$-connected).

\subsection{Twisting Grothendieck--Witt classes by units up to squares}
Here we discuss how Grothendieck--Witt classes can be twisted by sections of the sheaf of invertible regular functions up to squares. Let $s\in H^0(X,\mathcal{O}_X^\times)$ and let $\beta \in \sheafGW(X,L)$ be a class represented by a form $\beta \colon V \times V \to L$. We define $s\cdot \beta$ to be the form
\begin{equation} \label{eqn:twisted-form}
V\times V \xto{\beta} L \xto{s} L.
\end{equation}
This defines an action of $H^0(X,\mathcal{O}_X^\times)$ on $\sheafGW(X,L)$. It is clear that sections of $\mathcal{O}_X^{\times 2}$ act trivially, so this gives an action of $\frac{H^0(\mathcal{O}_X^\times)}{H^0(\mathcal{O}_X^{\times2})}$ on $\sheafGW(X,\mathcal{L})$. It in fact extends to an action of $H^0(\mathcal{O}_X^\times / \mathcal{O}_X^{\times 2})$:

\begin{proposition}\label{prop:square-class-action}
Let $\mathcal{L} \to X$ be any line bundle. Then there is a well-defined multiplication action of $H^0(\mathcal{O}_X^\times / \mathcal{O}_X^{\times 2})$ on $\sheafGW(X,\mathcal{L})$.
\end{proposition}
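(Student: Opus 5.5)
The plan is to build the action locally and glue it, using that $\sheafGW(-,\mathcal{L})$ is a Nisnevich (in particular Zariski) sheaf. A global section $\bar s \in H^0(X,\mathcal{O}_X^\times/\mathcal{O}_X^{\times 2})$ is a section of the quotient sheaf. Here I take the quotient in the étale topology, or in whichever topology (Zariski or Nisnevich) is intended; the argument is the same. Such a section is given by a cover $\{U_i \to X\}$ together with units $s_i \in \mathcal{O}^\times(U_i)$ such that on overlaps $U_{ij}$ (after possibly refining) we have $s_i = s_j t_{ij}^2$ for units $t_{ij}$.

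Given $\beta \in \sheafGW(X,\mathcal{L})$, restrict to $U_i$ and form $s_i\cdot \beta|_{U_i}$ via \Cref{eqn:twisted-form}. The observation before the statement shows that multiplication by a unit square $t^2$ acts trivially on $\sheafGW(U,\mathcal{L})$. The explicit isomorphism of forms is $v\mapsto t v$ on $V$, which also works for formal differences and is compatible with restriction.

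The key steps are as follows.

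\begin{enumerate}
\item Check that for a unit $u$ on an open $U$, the map $\beta\mapsto u\cdot\beta$ is a well-defined group endomorphism of the presheaf of groupoids of $\mathcal{L}$-valued forms. It commutes with restriction, respects direct sums, and so descends to $\sheafGW(U,\mathcal{L})$ after group completion and sheafification.
\item Use that $u=t^2$ acts as the identity on $\sheafGW(U,\mathcal{L})$, naturally in $U$.
\item Conclude that the local classes $s_i\cdot\beta|_{U_i}$ agree on overlaps. Indeed $s_i\beta = s_j t_{ij}^2 \beta = s_j\beta$ in $\sheafGW(U_{ij},\mathcal{L})$.
\item Glue these classes to a unique class $\bar s\cdot \beta\in\sheafGW(X,\mathcal{L})$ by the sheaf property.
\item Verify that the result is independent of the chosen representative $(U_i,s_i)$ by passing to a common refinement. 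Then check the action axioms (the identity acts trivially, $(\bar s\bar s')\beta = \bar s(\bar s'\beta)$, additivity in $\beta$) locally, where they are obvious.
\end{enumerate}

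The main subtlety is step 1 together with the topology issue. The unit action is defined on actual forms, but sections of $\sheafGW$ are only locally formal differences of forms. So the action must be defined as a morphism of sheaves $\mathcal{O}^\times\times\sheafGW(-,\mathcal{L})\to\sheafGW(-,\mathcal{L})$, not merely on global forms. I would do this by noting that $u\cdot$ is an automorphism of the stack of forms, natural in $U$. It therefore induces an automorphism of its groupoid completion presheaf, hence of the sheafification.

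There is also a topology mismatch if $\mathcal{O}^\times/\mathcal{O}^{\times2}$ is the étale quotient and $\sheafGW$ is only a Nisnevich sheaf. In that case I would verify that étale-local square roots suffice. The cleanest route is to observe that the action factors through the Nisnevich sheafification of $\mathcal{O}^\times/\mathcal{O}^{\times 2}$, or simply to interpret the quotient in the Nisnevich topology, where the gluing argument above applies verbatim.
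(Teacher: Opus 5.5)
Your argument is essentially the paper's: twist $\beta$ on each $U_i$ by the local representative $s_i$, note that the local twists agree on overlaps because $s_i/s_j$ is (locally) a square and scaling by $t^2$ is undone by the isometry $v\mapsto tv$, and glue by the sheaf property of $\sheafGW(-,\mathcal{L})$. Your extra bookkeeping (defining the unit action as a morphism of sheaves rather than on a single representing form, and checking independence of the representative and the action axioms) fills in details the paper leaves implicit.

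One correction: drop the \'etale option rather than hedging on it. When $2$ is invertible, every unit becomes a square on the finite \'etale cover obtained by adjoining $\sqrt{u}$, so the \'etale quotient sheaf is zero. \'Etale-local square roots therefore do not suffice: a nonsquare $u\in k^\times$ is \'etale-locally a square, yet $\langle u\rangle\neq\langle 1\rangle$ in $\GW(k)$, since $\sheafGW$ is not an \'etale sheaf. The intended quotient is the Zariski one (as in \Cref{prop:SES-global-units-mod-squares}); with it, or with the Nisnevich one, your gluing argument goes through verbatim.
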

\begin{proof}
Let $s \in H^0(\mathcal{O}_X^\times / \mathcal{O}_X^{\times 2})$ be a global section, given by data $(U_i, s_i)$, where $s_i$ is a unit and $s_i / s_j$ is a square on $U_{ij} := U_i \cap U_j$. Let
$V \times V \xto{\beta} \mathcal{L}$ be as above. On each $U_i$, we twist by $s_i$ as in \Cref{eqn:twisted-form} to obtain twisted pairings $\tilde \beta_i := s_i \cdot \beta|_{U_i} \in \sheafGW(U_i, \mathcal{L}|_{U_i})$. On double overlaps $U_{ij}$,
\[\tilde \beta_i|_{U_{ij}} = \tilde \beta_j|_{U_{ij}} \in \sheafGW(U_{ij}, \mathcal{L}|_{U_{ij}})\]
since $s_i/s_j$ is a square on $U_{ij}$. Thus these classes glue to a section of $\sheafGW(X, \mathcal{L})$.
\end{proof}

\begin{notation} In \Cref{prop:square-class-action}, if $s\in H^0(\mathcal{O}_X^\times / \mathcal{O}_X^{\times 2})$ and $\beta \in \sheafGW(X,\mathcal{L})$, we denote the resulting scaled form by $s\cdot \beta \in \sheafGW(X,\mathcal{L})$.
\end{notation}

We note that not all sections of $\mathcal{O}_X^\times / \mathcal{O}_X^{\times 2}$ come from global invertible regular functions on $X$. In nice cases, we can characterize the sections precisely. Although we do not make use of the following result in the proof of our main theorem, we include it for the curious reader:
 
\begin{proposition}\label{prop:SES-global-units-mod-squares}
Let $X$ be a disjoint union of integral schemes (e.g. $X$ smooth). Then there is a short exact sequence of the form
\begin{align}\label{eqn:ses-for-twisting}
    0 \to \frac{H^0(\mathcal{O}_X^\times)}{H^0(\mathcal{O}_X^\times)^2} \to H^0(\mathcal{O}_X^\times / \mathcal{O}_X^{\times 2}) \to \Pic(X)[2] \to 0
\end{align}
\end{proposition}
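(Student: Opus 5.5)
The plan is to use the Kummer short exact sequence of sheaves
\[
1 \to \mu_2 \to \mathcal{O}_X^\times \xto{(\cdot)^2} \mathcal{O}_X^{\times 2} \to 1
\]
and then combine it with the quotient sequence
\[
1 \to \mathcal{O}_X^{\times 2} \to \mathcal{O}_X^\times \to \mathcal{O}_X^\times/\mathcal{O}_X^{\times 2} \to 1,
\]
both taken on the Zariski site. Here $\mathcal{O}_X^{\times 2}$ denotes the image sheaf of the squaring map, so the first sequence is exact by definition.

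The long exact sequence of the second sequence gives
\[
H^0(\mathcal{O}_X^\times) \to H^0(\mathcal{O}_X^\times/\mathcal{O}_X^{\times 2}) \to H^1(\mathcal{O}_X^{\times 2}) \to H^1(\mathcal{O}_X^\times) = \Pic(X).
\]
The key steps are then the following.
\begin{enumerate}
\item Identify the kernel of the first map. It is $H^0(\mathcal{O}_X^{\times 2})$, the group of units that are \emph{locally} squares, and we must compare this with $H^0(\mathcal{O}_X^\times)^2$.
\item Identify $H^1(\mathcal{O}_X^{\times 2})$ and the map to $\Pic(X)$.
\end{enumerate}

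For both steps we use the hypothesis. We may reduce to $X$ integral, since everything decomposes over connected components. Then $\mu_2$ is the constant sheaf $\{\pm1\}$: when $\mathrm{char}\neq 2$ it is locally constant with value $\{\pm 1\}$ on a connected integral scheme, and in characteristic $2$ it is trivial. Constant sheaves on an irreducible space are flasque, so $H^i(X,\mu_2)=0$ for $i\ge 1$ in the Zariski topology.

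The Kummer sequence then gives two facts. First, $H^0(\mathcal{O}_X^\times)\to H^0(\mathcal{O}_X^{\times 2})$ is surjective, so every locally-square global unit is a global square, and the kernel in step (1) is exactly $H^0(\mathcal{O}_X^\times)^2$. Second, $H^1(\mathcal{O}_X^\times)\xto{\sim} H^1(\mathcal{O}_X^{\times 2})$, and under this isomorphism the map $H^1(\mathcal{O}_X^{\times 2})\to H^1(\mathcal{O}_X^\times)$ becomes the squaring map $\Pic(X)\xto{2}\Pic(X)$. Its kernel is $\Pic(X)[2]$.

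Substituting these into the long exact sequence yields the claimed short exact sequence. The main point requiring care is justifying that the composite $H^1(\mathcal{O}^\times)\to H^1(\mathcal{O}^{\times2})\to H^1(\mathcal{O}^\times)$ is multiplication by $2$. This holds because the composite sheaf map $\mathcal{O}^\times\to\mathcal{O}^{\times 2}\hookrightarrow\mathcal{O}^\times$ is $u\mapsto u^2$.

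Concretely, the map $H^0(\mathcal{O}_X^\times/\mathcal{O}_X^{\times 2})\to\Pic(X)[2]$ can be described as follows. Given data $(U_i,s_i)$ with $s_i/s_j=g_{ij}^2$, the $g_{ij}$ are well defined up to sign. Since the signs $\{\pm1\}$ form a constant sheaf on an irreducible space, they can be fixed so that the $g_{ij}$ satisfy the cocycle condition; this uses vanishing of $\check{H}^2$ of the constant sheaf, which is equivalent to the flasque argument above. The resulting line bundle $\mathcal{L}$ satisfies $\mathcal{L}^{\otimes 2}\cong\mathcal{O}_X$, trivialized via the $s_i$.

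Surjectivity onto $\Pic(X)[2]$ can also be checked directly. Given a trivialization $\mathcal{L}^{\otimes 2}\cong\mathcal{O}_X$ and local generators $e_i$ of $\mathcal{L}$, set $s_i$ to be the image of $e_i^{\otimes 2}$. These $s_i$ give a section of $\mathcal{O}_X^\times/\mathcal{O}_X^{\times 2}$, and its image is the class of $\mathcal{L}$.
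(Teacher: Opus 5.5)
Your argument is correct and is essentially the paper's proof. The paper feeds the four-term sequence $0\to\mu_2\to\mathcal{O}_X^\times\xrightarrow{\mathrm{sq}}\mathcal{O}_X^\times\to\mathcal{O}_X^\times/\mathcal{O}_X^{\times2}\to0$ into a hypercohomology spectral sequence, using that $\mu_2$ is flasque on an integral scheme. Splicing that sequence into the Kummer and quotient short exact sequences, as you do, is the hands-on version of the same computation, and it makes explicit both that only $H^1(X,\mu_2)=H^2(X,\mu_2)=0$ is needed and that the left-hand map is injective.
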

\begin{proof} The proof reduces to the case where $X$ is integral. Consider the exact sequence of Zariski sheaves
\begin{align*}
    (*) \quad 0 \to \mu_2 \to \mathcal{O}_X^\times \xrightarrow{\mathrm{sq}} \mathcal{O}_X^\times \to \mathcal{O}_X^\times / \mathcal{O}_X^{\times 2} \to 0,
\end{align*}
where $\mathrm{sq}$ is the squaring map. The sheaf $\mu_2$ is flasque, so taking hypercohomology yields a spectral sequence with $E_1$ page as follows, showing only the first three rows:
\[
\begin{tikzcd}
  H^0 : & \{\pm 1\} \arrow[r] & H^0(\mathcal{O}_X^\times) \arrow[r, "\mathrm{sq}"] & H^0(\mathcal{O}_X^\times) \arrow[r] & H^0(\mathcal{O}_X^\times / \mathcal{O}_X^{\times 2}) \\
  H^1 : & 0  & \Pic(X) \arrow[r, "\mathrm{sq}"] \arrow[rru, dashed] & \Pic(X) \arrow[r] & H^1(\mathcal{O}_X^\times / \mathcal{O}_X^{\times 2}) \\
  H^2 : & 0  & 0  &0  & H^2(\mathcal{O}_X^\times / \mathcal{O}_X^{\times 2})
\end{tikzcd}
\]
The dashed arrow comes from the $E_2$ page. The sequence abuts to $0$ since $(*)$ is exact, so we obtain an isomorphism
\[\coker\bigg(\frac{H^0(\mathcal{O}_X^\times)}{H^0(\mathcal{O}_X^\times)^2} \to H^0(\mathcal{O}_X^\times / \mathcal{O}_X^{\times 2}) \bigg) \cong \ker\big( \Pic(X) \xrightarrow{\mathrm{sq}} \Pic(X) \big) = \Pic(X)[2] .\]
If $X$ is a disjoint union of integral (smooth) schemes, the calculation above distributes over the components of $X$.
\end{proof}
The map to the 2-torsion elements in the Picard group in \Cref{prop:SES-global-units-mod-squares} is given as follows: let $s$ be represented by $(U_i, s_i)$, where each $s_i$ is invertible on $U_i$ and $s_i / s_j$ is a square. Let $g_{ij}$ be a choice of square root of $s_i/s_j$. We define on triple overlaps $U_{ijk}$,
\[\epsilon_{ijk} := g_{ij}g_{jk}g_{ki} \in \{\pm 1\},\]
observing that $\epsilon_{ijk}^2 = 1$ by definition of $s$. Since $H^2_\Zar(X,\mu_2) = 0$, we can globally resolve the signs on the $\epsilon_{ijk}$ so that they are all $1$. Then the $g_{ij}$ yield a line bundle, which squares to the trivial bundle since $g_{ij}^2 = s_i / s_j$.

\begin{remark}
If $X$ is reducible, or more generally if $H^1(X, \mu_2) \ne 0$, the left term in \Cref{eqn:ses-for-twisting} should be replaced by $\frac{H^0(\mathcal{O}_X^\times)}{H^0(\mathcal{O}_X^{\times2})}$ to account for local squares. If $H^2(X, \mu_2) \ne 0$, the description is more complicated and we do not get a well-defined map to $\Pic(X)[2]$. 
\end{remark}

\subsection{Twisting Grothendieck--Witt classes by sections of divisor sheaves} There is a more general notion of twisting we can do, which is by a section of $\mathcal{O}(D)$, where $D$ is a Cartier divisor. The twists by sections of $\mathcal{O}_X^\times$ are the particular case where the divisor $D$ is the zero divisor. However in this more general setting, the resulting classes are only non-degenerate over the complement of $D$.

\begin{terminology}
We will say an \emph{equation for $D$} is a section $s \in H^0(\mathcal{O}(D))$ that locally generates the ideal sheaf of $D$.
\end{terminology}

\begin{notation}\label{nota:OD-section-times-form}
Let $\beta \colon V \times V \to \mathcal{L}$ a non-degenerate symmetric bilinear form on $X$. Let $D$ be a Cartier divisor and $s$ an equation for $D$. The composite
\[
V \times V \xto{\beta} \mathcal{L} \xto{s} \mathcal{L}(D)
\]
is a symmetric bilinear form on $X$, which visibly vanishes along $D$. Pulling it back to $X\setminus D$ yields a well-defined class in $\sheafGW(X \setminus D, \mathcal{L}(D))$, which we denote $s \cdot \beta$.
\end{notation}
Just as in \Cref{prop:square-class-action}, we have a version of invariance of this twisting operation under squares:

\begin{proposition}
\label{prop:invariance-twisted-forms-under-squares}
Let $\beta \in \sheafGW(X, \mathcal{L})$, let $D,E$ be Cartier divisors on $X$, and let $s$ and $t$ be equations for $D$ and $E$, respectively. We have twisted forms $s\cdot \beta \in \sheafGW(X \setminus D,\mathcal{L}(D))$ and $st^2 \cdot \beta\in \sheafGW(X\setminus (D\cup E), \mathcal{L}(D+2E))$. Then after pulling back $s\cdot \beta$ to $X\setminus(D\cup E)$, we have that $s\cdot \beta$ and $st^2 \cdot \beta$ are equal.\footnote{Equality here is as in \Cref{rmk:GW-twist-by-squares}: the forms are identified along the canonical isomorphism $\sheafGW(X \setminus(D\cup E), \mathcal{L}(D)) \cong \sheafGW(X\setminus (D\cup E), \mathcal{L}(D+2E))$.}
\end{proposition}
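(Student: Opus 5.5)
The plan is to reduce the statement to the canonical isomorphism of \Cref{rmk:GW-twist-by-squares}, applied with $\mathcal{L}_2 = \mathcal{O}(E)$, and to check that this isomorphism carries $s\cdot\beta$ to $st^2\cdot\beta$. Write $U := X\setminus(D\cup E)$ throughout.

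First we record what $t$ looks like on $U$. Since $t$ is an equation for $E$ and we have removed $E$, the restriction $t|_U$ is a nowhere-vanishing section of $\mathcal{O}(E)|_U$. Hence multiplication by $t$ gives an isomorphism of line bundles
\[ t\colon \mathcal{O}_U \xto{\sim} \mathcal{O}(E)|_U. \]
Similarly $s|_U$ trivializes $\mathcal{O}(D)|_U$, so $s\cdot\beta$ and $st^2\cdot\beta$ are honest non-degenerate forms on $U$ with values in $\mathcal{L}(D)$ and $\mathcal{L}(D+2E)$ respectively.

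Next we recall the canonical isomorphism
\[ \sheafGW(U,\mathcal{L}(D))\cong\sheafGW(U,\mathcal{L}(D+2E)). \]
It sends a form $\gamma\colon V\times V\to\mathcal{L}(D)$ to the form on $V\otimes\mathcal{O}(E)$ given by
\[ (v\otimes a,\,w\otimes b)\mapsto \gamma(v,w)\otimes ab. \]
We apply this to $\gamma=s\cdot\beta$ and compare the result with $st^2\cdot\beta$ on $V$. The isometry we use is
\[ \phi = \mathrm{id}_V\otimes t\colon V\xto{\sim} V\otimes\mathcal{O}(E)|_U, \qquad v\mapsto v\otimes t. \]
Pulling the twisted form back along $\phi$ gives
\[ (v,w)\mapsto s\,\beta(v,w)\otimes t\cdot t = st^2\cdot\beta(v,w). \]
This is exactly $st^2\cdot\beta$ as defined in \Cref{nota:OD-section-times-form}. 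So the two classes agree in $\sheafGW(U,\mathcal{L}(D+2E))$.

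Finally we extend from a single form to an arbitrary class $\beta\in\sheafGW(X,\mathcal{L})$. Such a class is only locally a formal difference of forms. Both constructions, twisting by a section and the square-twist isomorphism, are additive, compatible with restriction to opens, and defined on the stack of forms before groupoid completion. Since $\sheafGW$ is a sheaf, the equality can therefore be checked locally on representing forms. This works because the isometries $\phi$ above are canonical: they depend only on $t$. Hence the local isometries glue, and no choices need to be made on overlaps.

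The main obstacle is bookkeeping rather than mathematics. One has to be sure that the canonical identification of \Cref{rmk:GW-twist-by-squares} uses the tensor square of $\mathcal{O}(E)$, rather than some other chosen square root. With that convention, the section $t\otimes t$ corresponds to $t^2$ under $\mathcal{O}(E)^{\otimes 2}\cong\mathcal{O}(2E)$, and the argument above goes through.
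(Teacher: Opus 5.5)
Your proof is correct and is essentially the paper's argument. The paper also exhibits the isometry given by multiplication by $t$: its map $t\times t\colon V(-E)\to V$ compares the $\mathcal{O}(-E)$-twist of $st^2\cdot\beta$ with $s\cdot\beta$ as $\mathcal{L}(D)$-valued forms, whereas you twist $s\cdot\beta$ up by $\mathcal{O}(E)$ and pull back along $v\mapsto v\otimes t$, which is the same comparison run in the opposite direction. Your closing paragraph on extending from a single representing form to an arbitrary class is a detail the paper leaves implicit.
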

\begin{proof} Let $\beta$ be represented by $V \times V \to \mathcal{L}$. Then this proposition comes from the observation that the diagram commutes:
\[\begin{tikzcd}
    V(-E)\times V(-E)\rar["\beta"]\dar["t\times t" left] & \mathcal{L}(-2E)\rar["st^2"]\dar["t^2"] & \mathcal{L}(D)\dar[equal] \\
    V\times V\rar["\beta" below] & \mathcal{L}\rar["s" below] & L(D).
\end{tikzcd} \qedhere\]
\end{proof}

As a special case, \Cref{prop:invariance-twisted-forms-under-squares} allows us to remove vanishing loci of even order, extending the forms back over all of $X$:

\begin{remark}\label{rmk:twisting-by-a-section-of-2D}
Let $D$ be a Cartier divisor. Let $s$ be an equation for $2D$. Then for any equation $t$ for $D$, we may write $s = ut^2$ for some $u\in H^0(\mathcal{O}^\times)$. By \Cref{prop:invariance-twisted-forms-under-squares}, for any $\beta \in \sheafGW(X,\mathcal{L})$,
\[s \cdot \beta = u \cdot \beta |_{X \setminus D}.\]
The class $u \cdot \beta \in \sheafGW(X)$ is moreover independent of the choice of $t$. Indeed it is even harmless to vary the choice of $t$ (and therefore $u$) locally on $X$, since the class of $u = s/t^2$ in $\mathcal{O}_X^\times/\mathcal{O}_X^{\times 2}$ does not change. Note that $u \cdot \beta$ need not equal $\beta$.
\end{remark}

\subsection{Twisted orientations and twisted degrees}

We now introduce what we call the \emph{twisted degree} of a map relative to a divisor. We are motivated both by the idea of orientations of vector bundles relative to divisors, following \cite{LarsonVogt}, as well as the theory of scaled trace forms, whose use in real root counting is classically known (see \cite{RealRoots} for a nice introduction). 

\begin{notation}\label{notation:kxyf}
For the remainder of this section, $k$ is a field, $X$ and $Y$ are qcqs $k$-schemes, $f : X \to Y$ is finite flat and lci, $D$ is a Cartier divisor on $X$ and $s$ is an equation for $D$.
\end{notation}

We make the following natural definition.

\begin{definition}\label{def:orientation-relative-to-divisor}
A \emph{$D$-orientation of $f$} is a line bundle $\mathcal{L} \to X$ and a choice of isomorphism
\begin{align*}
    \rho \colon \mathcal{L}^{\otimes 2} \xto{\sim} \omega_f \otimes \mathcal{O}(-D).
\end{align*}
We say $f$ is \emph{$D$-orientable} if it admits such an isomorphism and \emph{$D$-oriented} if we have fixed one, and we call the resulting class in $\sheafGW(X, \omega_f(-D))$ a \emph{$D$-orientation class} (of $f$). Finally, we call $s\cdot \rho \in \sheafGW(X \setminus D, \omega_f)$ a \emph{twisted orientation class} of $f$.
\end{definition}

\begin{definition}\label{def:twisted-degree}
We define the \emph{twisted degree} of $f$ (with respect to $D$ and $s$) as follows: we restrict the twisted orientation class $s \cdot \rho$ to $X \setminus f^{-1}f(D)$, then push forward along $X \setminus f^{-1}f(D) \to Y \setminus f(D)$ (which is again finite flat and lci). We denote it 
\[\deg^s(f,\rho) := f_*( s \cdot \rho|_{X \setminus f^{-1}f(D)}) \in \sheafGW(Y \setminus f(D)).\]
Explicitly, it is the restriction of the following symmetric bilinear form to $Y\setminus f(D)$:
\[
f_\ast \mathcal{L} \times f_\ast \mathcal{L} \to f_\ast (\mathcal{L}^{\otimes 2}) \xto{f_*\rho} f_\ast(\omega_f \otimes \mathcal{O}(-D)) \xto{f_\ast s} f_\ast \omega_f \xto{\Tr} \mathcal{O}_Y.
\]
\end{definition}

Similarly to \Cref{prop:sum-of-local-degrees}, we can pull back a twisted degree $\deg^s(f,\rho) \in \sheafGW(Y \setminus f(D))$ to a point $y\in \GW(k(y))$ to get a class we write as $\deg^s(f,\rho)(y)$. We then have:

\begin{proposition}[Twisted local degree formula]\label{prop:twisted-degree-sum-of-local-degrees}
Let $\rho$ be a $D$-orientation of $f$. Then for any point $y\in Y \setminus f(D)$ over which $f$ is \'etale, we have an equality in $\GW(k(y))$ of the form
\[
    \deg^s(f,\rho)(y) = \sum_{x\in f^{-1}(y)} \Tr_{k(x) / k(y)} \langle \Jac(f)(x)\cdot s(x)\rangle.
\]
Thus the local degree is twisted by the square class of the section $s$ across the fiber.
\end{proposition}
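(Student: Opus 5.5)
The plan is to reduce to the untwisted local degree formula, \Cref{prop:sum-of-local-degrees}, by working locally near the fiber $f^{-1}(y)$. The first step is a reduction. By \Cref{prop:a1deg-flat-pullback}, applied to the twisted setting (the twisted degree is built from $f_*$, the trace, and multiplication by $s$, all of which commute with flat base change), we may pull back along a Zariski neighborhood and then along $\Spec \mathcal{O}_{Y,y}$, or even its henselization. Thus we may assume $Y$ is local with closed point $y$, $Y\cap f(D)=\emptyset$, and $f$ is finite étale. In particular $X$ is semilocal, so every line bundle on $X$ is trivial. We may then also assume $D|_X=\emptyset$ after restricting, so that $s$ is a unit on $X$ and $\mathcal{O}(-D)\cong\mathcal{O}_X$ via $s$.

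The key observation is then purely formal. The composite $\mathcal{L}^{\otimes 2}\xto{\rho}\omega_f(-D)\xto{s}\omega_f$ is itself an isomorphism over $X\setminus D$. Hence $\rho' := s\circ\rho$ is an honest orientation of $f|_{X\setminus f^{-1}f(D)}$ in the sense of \Cref{term:oriented-orientation-orientation-class}, and by \Cref{def:twisted-degree} we have, tautologically,
\[
\deg^s(f,\rho) = \deg(f|_{X\setminus f^{-1}f(D)},\rho').
\]
Applying \Cref{prop:sum-of-local-degrees} to $\rho'$ gives $\deg^s(f,\rho)(y)=\sum_x \Tr_{k(x)/k(y)}\langle \Jac_{\rho'}(f)(x)\rangle$, where $\Jac_{\rho'}$ denotes the Jacobian computed in charts compatible with $\rho'$.

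The remaining step, which I expect to be the main obstacle, is to make precise that $\langle \Jac_{\rho'}(f)(x)\rangle = \langle \Jac(f)(x)\cdot s(x)\rangle$. This requires interpreting ``$\Jac(f)$ computed compatibly with $\rho$'' for a $D$-orientation, and $s(x)$ in a compatible trivialization of $\mathcal{O}(D)$. I would do this as follows:
\begin{enumerate}
\item Choose local coordinates near $x$ and near $y$. These trivialize $\omega_f$ by $dx/dy$.
\item Choose a local generator $\ell$ of $\mathcal{L}$ and a local generator $e$ of $\mathcal{O}(-D)$.
\item Require that $\rho(\ell\otimes\ell)$ equal $(dx/dy)\otimes e$ times a square. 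This is the meaning of ``compatible charts''.
\item Define $s(x)$ as the value of $s\cdot e\in\mathcal{O}_X$ at $x$.
\end{enumerate}
With these choices, $\rho'(\ell\otimes\ell)=s(x)\cdot(dx/dy)$ up to squares. Compatible charts for $\rho'$ then differ from the chosen coordinates by rescaling the trivialization by $s$, and the determinant of the differential picks up exactly this factor. Consequently the local degree at $x$, given by the trace form of $k(x)$ scaled by the Jacobian, becomes $\Tr_{k(x)/k(y)}\langle \Jac(f)(x)\,s(x)\rangle$.

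Finally, the result should be independent of all choices up to squares. Changing $e$ by a unit $u$ multiplies both the square-class requirement in step (3) and $s(x)$ by $u$, which cancels. Changing $\ell$ only changes things by squares. Thus the formula is well defined, consistent with \Cref{prop:invariance-twisted-forms-under-squares}.
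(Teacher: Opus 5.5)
Your proof is correct, but it is organized differently from the paper's, and more modularly. The paper re-enters the proofs of \cite[3.5, 3.8]{KLSW-count}: it decomposes the pulled-back form over the points of $f^{-1}(y)$ and tracks the extra scaling by $s$ through those arguments.

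You instead observe that over $X\setminus f^{-1}f(D)$ the composite $\rho'=s\circ\rho\colon\mathcal{L}^{\otimes 2}\xto{\sim}\omega_f$ is an honest orientation. Hence $\deg^s(f,\rho)=\deg(f|_{X\setminus f^{-1}f(D)},\rho')$ holds by definition on all of $Y\setminus f(D)$, and \Cref{prop:sum-of-local-degrees} applies verbatim as a black box. The only remaining content is the comparison $\langle\Jac_{\rho'}(f)(x)\rangle=\langle\Jac(f)(x)\,s(x)\rangle$, together with its independence of the local generator $e$ of $\mathcal{O}(-D)$, and you check both correctly.

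Your route buys two things the paper leaves implicit. First, it gives a precise meaning to ``$\Jac(f)(x)\cdot s(x)$'' when $\rho$ is only a $D$-orientation. Second, it shows that away from $f(D)$ twisting is merely a change of orientation. So every local formula for untwisted degrees (not just the \'etale one) transfers at once to points off $f(D)$. The twist only matters for how the class behaves across $f(D)$, which is exactly what the chamber arguments exploit.

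Two minor simplifications are available. Your reduction to a (henselian) local base is harmless but unnecessary, since the identity $\deg^s(f,\rho)=\deg(f|_{X\setminus f^{-1}f(D)},\rho')$ is global. In steps (3)--(4) you also need no talk of re-choosing compatible charts. Fix one trivialization $\tau$ of $\omega_f$ near $x$ and write $\rho(\ell\otimes\ell)=g\,\tau\otimes e$. Then $\rho'(\ell\otimes\ell)=g\,(s\cdot e)\,\tau$, so the unit governing the local degree for $\rho'$ is the one for $\rho$ multiplied by $s(x)=(s\cdot e)(x)$. This comparison is independent of any sign or normalization conventions in the untwisted formula.
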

\begin{proof} When we pull $\deg^s(f,\rho) \in \sheafGW(Y\setminus f(D))$ back to $y$, it decomposes as a sum of forms over the preimages of $y$, as in \cite[Definition~3.1]{KLSW-count}. By keeping track of the scaling by $s$ in the proofs of \cite[3.5,3.8]{KLSW-count}, the result follows.
\end{proof}

As an analogy of \Cref{prop:pullback-of-deg} we have the following:
\begin{proposition}\label{prop:pullback-of-twisted-deg} Let $\rho$ be a $D$-orientation of $f$. Let $g\colon Y' \to Y$ be flat and let $f'$ and $g'$ be defined by the pullback diagram
\[\begin{tikzcd}
    X'\rar["g'"]\dar["f'" left] & X\dar["f" right] \\
    Y'\rar["g" below] & Y.
\end{tikzcd}\]
Then $f'$ is finite flat and lci, $g'^*\rho$ is a $g'^*D$-orientation of $f'$, and we have an equality in $\sheafGW(Y' \setminus g^{-1}f(D))$ of the form
\[
    g^\ast \deg^s(f,\rho) = \deg^{g^{\prime\ast} s}(f',g^{\prime\ast}\rho).
\]
\end{proposition}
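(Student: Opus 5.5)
The plan is to reduce the statement to the untwisted flat base change \Cref{prop:pullback-of-deg} and two compatibility facts: base change of the cotangent complex, and compatibility of twisting with pullback.

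\textbf{Step 1 (properties of $f'$).} Finiteness and flatness are stable under arbitrary base change. Since $g$ is flat, the square is Tor-independent, so $f'$ is lci and $L_{f'} \simeq g'^*L_f$. Taking determinants gives a canonical isomorphism $\omega_{f'} \cong g'^*\omega_f$. This is exactly the input used in \Cref{prop:pullback-of-deg}.

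\textbf{Step 2 (the orientation pulls back).} Because $g'$ is flat, an equation $s$ for $D$ pulls back to a local non-zero-divisor $g'^*s$. So $g'^*D$ is a Cartier divisor with equation $g'^*s$, and $g'^*\mathcal{O}(-D) \cong \mathcal{O}(-g'^*D)$. Pulling back $\rho\colon \mathcal{L}^{\otimes 2} \xto{\sim} \omega_f(-D)$ and composing with the isomorphism of Step 1 yields
\[
g'^*\rho\colon (g'^*\mathcal{L})^{\otimes 2} \xto{\sim} \omega_{f'}(-g'^*D),
\]
which is a $g'^*D$-orientation of $f'$.

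\textbf{Step 3 (comparison of degrees).} First check the loci. We have $g'^{-1}(X\setminus f^{-1}f(D)) = X' \setminus f'^{-1}g^{-1}f(D)$, and $f'(g'^*D) \subseteq g^{-1}f(D)$. Hence both sides of the claimed equality make sense on $Y'\setminus g^{-1}f(D)$, after restricting the right-hand side from $Y'\setminus f'(g'^*D)$.

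Next, restrict everything over the open $U = Y\setminus f(D)$ and its preimage $U' = g^{-1}(U)$. Over $U$, the twisted orientation class $s\cdot\rho$ is an honest nondegenerate form $\mathcal{L}\times\mathcal{L}\to\omega_f$. Pulling back the composite $\mathcal{L}^{\otimes 2}\xto{\rho}\omega_f(-D)\xto{s}\omega_f$ along $g'$ gives $(g'^*s)\cdot(g'^*\rho)$, so
\[
g'^*(s\cdot\rho) = (g'^*s)\cdot(g'^*\rho).
\]
It remains to show $g^*f_* = f'_*g'^*$ on $\sheafGW(-,\omega)$. There are two ingredients. Flat base change gives $g^*f_*\mathcal{L} \cong f'_*g'^*\mathcal{L}$. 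The Grothendieck--Serre trace $\Tr\colon f_*\omega_f\to\mathcal{O}_Y$ is compatible with base change along $g$.

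These ingredients are exactly the content of \Cref{prop:pullback-of-deg}, or \cite[2.8]{KLSW-count}. So one may simply apply that result to the finite flat lci map $X\setminus f^{-1}f(D) \to U$, with $\omega_f$-valued form $s\cdot\rho$ in place of an orientation class. The proof there only uses that the input is a section of $\sheafGW(X,\omega_f)$, not that it comes from a square root.

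\textbf{Main obstacle.} The main point needing care is the last remark: \Cref{prop:pullback-of-deg} is stated for orientation classes. I would justify it either by inspecting that the proof is really about the pushforward $f_*\colon\sheafGW(X,\omega_f)\to\sheafGW(Y)$ commuting with flat pullback, or by directly verifying compatibility of the trace with flat base change. The latter holds because the trace is defined via $f^!$, and $f^!$ commutes with Tor-independent base change; see \cite[Appendix~B]{bachmannEulerClassesSixfunctors2023}. Everything else is bookkeeping of canonical isomorphisms, including the identification of $\sheafGW$ groups twisted by isomorphic line bundles as in \Cref{rmk:GW-twist-by-squares}.
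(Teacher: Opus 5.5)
Your proposal is correct and takes essentially the same route as the paper. The paper gives no separate proof and simply presents this as the twisted analogue of \Cref{prop:pullback-of-deg}, i.e.\ \cite[2.8]{KLSW-count}. Your two observations supply exactly the justification the paper leaves implicit: that $g'^*(s\cdot\rho) = (g'^*s)\cdot(g'^*\rho)$ over $X'\setminus f'^{-1}g^{-1}f(D)$, and that the flat base change argument only uses the pushforward $f_*\colon \sheafGW(X,\omega_f)\to\sheafGW(Y)$, not the fact that the input is a square.
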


\begin{remark}\label{rmk:twisting-doesnt-commute-with-pushforward}
Twisting the orientation class does not commute with pushforward and can change the degree of the map, just like changing the orientation, e.g. by altering the local degrees across a fiber. Even when $D = 0$ and $s \cdot \rho$ differs from $\rho$ by global invertible function on $X$, we should not expect $\deg^s(f,\rho)$ to differ from $\deg(f, \rho)$ in a simple way (unless $X$ has very few such functions).
\end{remark}

We note the behavior on twisted degrees, considered up to squares:

\begin{proposition}[Twisted degrees up to squares]
\label{prop:twisted-degrees-up-to-squares}
Let $\rho : \mathcal{L}^{\otimes 2} \xto{\sim} \omega_f \otimes \mathcal{O}(-D)$ be a $D$-orientation of $f$. Let $E$ be another Cartier divisor on $X$ and $t$ an equation for $E$. Let $\rho(-2E)$ be the twist of $\rho$ by $\mathcal{O}(-2E)$. Then $\deg^{s}(f,\rho) = \deg^{st^2}(f,\rho(-2E))$ in $\sheafGW(Y \setminus f(D \cup E))$.
\end{proposition}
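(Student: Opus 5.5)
The plan is to compare the two bilinear forms on $Y \setminus f(D\cup E)$ directly, before applying the trace, and reduce to the commutative diagram in the proof of \Cref{prop:invariance-twisted-forms-under-squares}. First we check that $\rho(-2E)$ is a $(D+2E)$-orientation with the line bundle $\mathcal{L}(-E)$. Twisting $\rho$ by $\mathcal{O}(-2E)$ gives
\[
\rho(-2E)\colon \mathcal{L}(-E)^{\otimes 2} \cong \mathcal{L}^{\otimes 2}(-2E) \xto{\sim} \omega_f(-D-2E).
\]
Moreover $st^2$ is an equation for $D+2E$. So $\deg^{st^2}(f,\rho(-2E))$ is defined on $Y\setminus f(D+2E) = Y \setminus f(D\cup E)$. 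By \Cref{def:twisted-degree}, it is the restriction of
\[
f_*\mathcal{L}(-E)\times f_*\mathcal{L}(-E) \to f_*(\omega_f(-D-2E)) \xto{f_*(st^2)} f_*\omega_f \xto{\Tr} \mathcal{O}_Y .
\]

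The key step is that multiplication by $t$ gives a map $\mathcal{L}(-E)\to\mathcal{L}$ which intertwines the two pairings valued in $\omega_f$. This is exactly the diagram from \Cref{prop:invariance-twisted-forms-under-squares}, with $V=\mathcal{L}$ and $\beta=\rho$. Concretely, for local sections $a,b$ of $\mathcal{L}(-E)$ we have
\[
s\cdot\rho(ta,tb) = s t^2\,\rho(a,b) = st^2\cdot\rho(-2E)(a,b).
\]
On $X\setminus E$ the map $t$ is an isomorphism. Hence, over the open set $U := X\setminus f^{-1}f(D\cup E)$, the twisted orientation classes $s\cdot\rho$ and $st^2\cdot\rho(-2E)$ are isometric as $\omega_f$-valued forms. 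They are therefore equal in $\sheafGW(U,\omega_f)$. No choice of alignment is needed here, since the isometry is induced by the canonical identification of \Cref{rmk:GW-twist-by-squares}.

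Finally, apply $f_*$ followed by $\Tr$ for the finite flat lci map $U \to Y\setminus f(D\cup E)$. The pushforward $f_*(t)\colon f_*\mathcal{L}(-E)|\to f_*\mathcal{L}|$ is an isomorphism of locally free sheaves over $Y\setminus f(D\cup E)$, because $f^{-1}$ of this open set avoids $E$. It intertwines the two pushed-forward forms, since $\Tr$ is applied to the same $\omega_f$-valued section. The resulting symmetric bilinear forms on $Y\setminus f(D\cup E)$ are therefore isometric, which gives the claimed equality.

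The main subtlety is bookkeeping on open sets. We must make sure the restriction to $X\setminus f^{-1}f(D)$ used in defining $\deg^s(f,\rho)$, after further restriction, agrees with the restriction to $X \setminus f^{-1}f(D\cup E)$. This holds because pushforward along finite flat maps commutes with restriction to open subsets of the base, which is flat base change as in \Cref{prop:pullback-of-twisted-deg}. Beyond that, the argument is a formal consequence of \Cref{prop:invariance-twisted-forms-under-squares}.
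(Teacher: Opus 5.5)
Your proposal is correct and follows the same route as the paper, whose proof simply cites \Cref{prop:invariance-twisted-forms-under-squares}. There, multiplication by $t$ gives an isometry $\mathcal{L}(-E)\to\mathcal{L}$ over $X\setminus E$ identifying $s\cdot\rho$ with $st^2\cdot\rho(-2E)$ as $\omega_f$-valued forms, and you additionally make explicit the pushforward, trace and restriction bookkeeping that the paper leaves implicit.
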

\begin{proof} This follows by \Cref{prop:invariance-twisted-forms-under-squares}.
\end{proof}

A particular case is when $f$ is oriented relative to the square of a divisor. We can then extract two different degrees, which are twisted and untwisted:

\begin{remark}\label{rmk:twisting-degree-by-section-of-2D} Let $\rho : \mathcal{L}^{\otimes 2} \xto{\rho} \omega_f \otimes \mathcal{O}(-D)$ be a $D$-orientation of $f$ and assume that $D = 2E$, where $E$ is a Cartier divisor.
\begin{enumerate}
    \item The class $\rho(E)$ defined by
    \begin{equation}\label{eqn:rho-twisted-by-half-D}
        \mathcal{L}(E) \times \mathcal{L}(E) \to \mathcal{L}^{\otimes 2} \otimes \mathcal{O}(2E) \xto{\rho \otimes \id_{\mathcal{O}(2E)}} \omega_f,
    \end{equation}
    is an orientation class of $f$ in the sense of \Cref{term:oriented-orientation-orientation-class}. Thus we obtain a well-defined (untwisted) degree
    \[
    \deg(f,\rho(E)) = f_*(\rho(E)) \in \sheafGW(Y).
    \]
    \item By writing $s=ut^2$ for some equation $t$ for $E$ and some $u\in H^0(X,\mathcal{O}_X^\times)$, by \Cref{prop:twisted-degrees-up-to-squares} the twisted degree $\deg^s(f,\rho) \in \sheafGW(Y \setminus f(E))$ extends to the twisted degree $\deg^u(f,\rho(E))$, which is again defined along all of $f$:
    \begin{align*}
        \deg^u(f, \rho(E)) &= f_\ast(u\cdot \rho(E)) \in \sheafGW(Y).
    \end{align*}
    As in \Cref{rmk:twisting-doesnt-commute-with-pushforward}, $\deg(f, \rho(E))$ and $\deg^u(f, \rho(E))$ are different in general.
\end{enumerate}
\end{remark}

\begin{example}\label{exa:pullback-section-is-a-square}
A notable case that combines \Cref{prop:pullback-of-twisted-deg} and \Cref{rmk:twisting-degree-by-section-of-2D} is when $D$ is not the square of a Cartier divisor, but the pullback $g^{\prime\ast}(D)$ \emph{is} a square of a Cartier divisor, say $g'^*D = 2E$. In this case $\deg^s(f, \rho)$ is the only available `degree' of $f$ and is defined only on $Y \setminus f(D)$. By \Cref{rmk:twisting-degree-by-section-of-2D}, the pullback $g^*\deg^{s}(f, \rho)$ extends to a twisted degree of $f'$, defined on all of $Y'$. We also see that $f'$ also has an `untwisted' degree $\deg(f', (g^\ast\rho)(E))$. As discussed above, the two degrees can be quite different. Indeed, this is the essential mechanism behind our main result on Wronski maps.
\end{example}

\subsection{\texorpdfstring{$\mathbb{A}^1$}{A1}-chambers} The definition of untwisted degrees motivates the following setup.  We maintain \Cref{notation:kxyf}.

Let $\rho$ be an orientation of the map $f\colon X \to Y$. If $Y$ is not $\mathbb{A}^1$-connected, the class $\deg(f,\rho) \in \sheafGW(Y)$ need not be fiber-invariant: the value of $\deg(f,\rho)(y) \in \GW(k(y))$ as in \Cref{prop:sum-of-local-degrees} may depend upon the choice of closed point $y\in Y$. In differential topology, we study the connected components of $Y$, as Brouwer degrees are constant in fibers on each connected component; these are called \emph{chambers}. In algebraic geometry, we work instead with a slightly different notion, which we will call an $\A^1$\emph{-chamber}.

\begin{definition}\label{def:A1-chamber} Let $Y$ be a $k$-variety. An $\mathbb{A}^1$-\emph{chamber} of $Y$ is a flat morphism $g\colon W \to Y$ for which $W$ is smooth and essentially $\A^1$-connected.
\end{definition}

An arbitrary variety need not admit \emph{any} $\A^1$-chambers, although in the context considered in this paper we have many natural ones to consider. The primary advantage of chambers is that points lying in the (image of the) same $\A^1$-chamber have a fiber-invariance property.

\begin{proposition}[Fiber invariance within $\mathbb{A}^1$-chambers]\label{prop:degs-same-across-a1-chamber} Let $\rho$ be an orientation of $f$. Let $g\colon W \to Y$ be an $\mathbb{A}^1$-chamber, let $L/k$ be a field extension, and let $y_1,y_2 \in Y(L)$ be two $L$-points lying in the image of $g$. Then $\deg(f,\rho)(y_1) = \deg(f,\rho)(y_2)$ in $\GW(L)$.
\end{proposition}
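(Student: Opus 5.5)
The strategy is to transport the question to $W$ via flat base change and then use the constancy of $\sheafGW$ on essentially $\A^1$-connected varieties. Form the pullback square of $f$ along $g$, with $f'\colon X' = X\times_Y W \to W$ and $g'\colon X'\to X$. By \Cref{prop:pullback-of-deg}, $f'$ is finite flat lci and oriented by $g'^*\rho$, and
\[
\deg(f',g'^*\rho) = g^*\deg(f,\rho) \in \sheafGW(W).
\]
Since $W$ is smooth and essentially $\A^1$-connected, the analogue of \Cref{prop:a1connected-constancy} for such varieties applies. Concretely, $W\hookto \overline{W}$ has complement of codimension $\geq 2$, so $\sheafGW(\overline{W})\cong\sheafGW(W)$ by unramifiedness. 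Then $\sheafGW(\overline{W})\cong \GW(k)$, since $\overline W$ is $\A^1$-connected (here I use the chain-connected version of \Cref{prop:a1connected-constancy}, or the cited \cite[2.35]{KLSW-count} for mere $\A^1$-connectedness). Hence there is a class $\overline{\beta}\in\GW(k)$ whose pullback to $W$ is $g^*\deg(f,\rho)$, and for any $L$-point $w$ of $W$ we get $(g^*\deg(f,\rho))(w) = p^*\overline{\beta}$ in $\GW(L)$.

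Next, interpret "$y_i$ lies in the image of $g$" as meaning there are $w_i\in W(L)$ with $g(w_i)=y_i$. Here $L$ is a field extension of $k$, and if needed we restrict to finite or finitely generated separable $L$, or interpret the hypothesis as the existence of $L$-points. Functoriality of pullback gives
\[
\deg(f,\rho)(y_i) = w_i^*g^*\deg(f,\rho) = p^*\overline{\beta} \in \GW(L)
\]
for $i=1,2$, and the two values coincide.

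The main obstacle is the middle step: justifying that a section of $\sheafGW(W)$ has the same value at any two $L$-points for an arbitrary extension $L/k$. One clean route avoids needing a global isomorphism over $k$. Base change $\overline{W}$ to $L$ (which is legitimate only if $\A^1$-connectedness is stable under separable field extension, as in the Asok--Morel definition quantifying over all finitely generated separable $L$). The two $L$-points of $\overline W_L$ are then joined by a chain of $\A^1_L$'s, and Harder's theorem shows that pullbacks of a class along $0$ and $1$ of $\A^1_L$ agree. If instead one only has "$\A^1$-connected" in the sheaf-theoretic sense, I would cite \cite[2.35]{KLSW-count} to identify $\sheafGW(\overline W)$ with $\GW(k)$ and then pull back along $p$. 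Either way the core of the argument is the pullback compatibility in \Cref{prop:pullback-of-deg} together with unramifiedness.
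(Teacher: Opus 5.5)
Your proposal is correct and follows the paper's proof: the paper likewise assumes the maps $y_i\colon \Spec(L)\to Y$ factor through $W$, which is exactly your reading of ``in the image of $g$''. It then uses \Cref{prop:a1deg-flat-pullback} to pass to $g^*\deg(f,\rho)\in\sheafGW(W)$, and concludes from \Cref{prop:a1connected-constancy} together with unramifiedness, since $W$ is essentially $\A^1$-connected. Your remark about arbitrary extensions $L/k$ is also sound: once $g^*\deg(f,\rho)$ is pulled back from a class $\overline{\beta}\in\GW(k)$, functoriality gives the value $p^*\overline{\beta}$ at every $L$-point of $W$.
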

\begin{proof} The maps $y_1,y_2\colon \Spec(L) \to Y$ factor through $W$. By \Cref{prop:a1deg-flat-pullback}, we can compute their fiber degrees using $g^\ast \deg(f,\rho)\in \sheafGW(W)$. Since $W$ is essentially $\mathbb{A}^1$-connected, equality follows from \Cref{prop:a1connected-constancy}.
\end{proof}

Often (and in our application to Wronski maps), we cannot even define a degree in $\sheafGW(Y)$ due to $f$ being nonorientable, whereas $f$ may have a natural $D$-orientation for some Cartier divisor $D \subset X$. We therefore introduce a notion of $\A^1$-chamber relative to $D$. For this purpose, we will assume $Y$ is smooth and $X$ is generically smooth. Let $R_f \subseteq X$ be the ramification locus of $f$. Note that $R_f$ includes any singular locus of $X$ (since $Y$ is smooth).

\begin{definition}\label{def:relative-A1-chamber}
Let $E \subseteq Y$ be a divisor. An \emph{$\A^1$-chamber of $Y$ relative to $E$} is an $\A^1$-chamber $g : W \to Y$ such that $g^*E$ is the square of a divisor on $W$.

Now assume $D$ has no irreducible components contained in $R_f$. An \emph{$\A^1$-chamber of $f$ relative to $D$} is an $\A^1$-chamber of $Y$ relative to $f(D)$.
\end{definition}

We emphasize that the $\A^1$-chambers of a \emph{map} are defined relative to divisors on the domain, and only relative to divisors with no components in common with the ramification locus $R_f$.

\begin{theorem}[Fiber invariance of twisted degrees in chambers of a map]
\label{thm:fiber-invariance-twisted-degrees-on-chambers}
Let $\rho$ be a $D$-orientation of~$f$. Assume $D$ has no irreducible components contained in $R_f$, and let $g : W \to Y$ be an $\A^1$-chamber of $f$ relative to $D$. Let $w_1,w_2 \in W(k)$ and let $y_1,y_2 \in Y(k)$ be their images. Assume the $y_i$ are not in $f(D)$. Then
\begin{align*}
    \deg^s(f,\rho)(y_1) = \deg^s(f,\rho)(y_2) \text{ in } \GW(k).
\end{align*}
\end{theorem}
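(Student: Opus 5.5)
The strategy is to pull everything back to the chamber $W$ and use \Cref{exa:pullback-section-is-a-square}. On $W$ the divisor becomes a square, so the twisted degree extends to a class defined on all of $W$. The conclusion then follows from essential $\A^1$-connectivity, via \Cref{prop:a1connected-constancy} and unramifiedness of $\sheafGW$.

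First, form the pullback square with $f' : X' = X\times_Y W \to W$ and $g' : X' \to X$. By \Cref{prop:pullback-of-twisted-deg}, $g'^*\rho$ is a $g'^*D$-orientation of $f'$, and
\[
g^*\deg^s(f,\rho) = \deg^{g'^*s}(f', g'^*\rho)
\]
in $\sheafGW(W\setminus g^{-1}f(D))$. Since $y_i\notin f(D)$, the points $w_i$ lie in $W\setminus g^{-1}f(D)$. Therefore $\deg^s(f,\rho)(y_i) = \deg^{g'^*s}(f',g'^*\rho)(w_i)$, and it suffices to show that the right-hand side does not depend on $i$.

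The key step is to show that $g'^*D$ agrees with twice a Cartier divisor on the open set $X'^\circ = X'\setminus R_{f'}$ where $f'$ is étale. By hypothesis $g^*f(D) = 2F$ for a divisor $F$ on $W$. Over the étale locus of $f$, the map $f$ is a local isomorphism, so $f^*f(D)$ is a sum of $D$ and other components, each appearing with multiplicity one. Since no component of $D$ lies in $R_f$, the divisor $D$ is a union of components of $f^{-1}f(D)$ on this locus. Pulling back along the étale map $f'$ over $W$, the divisor $f'^*(2F) = g'^*f^*f(D)$ is even. Its components decompose into those coming from $D$ and those that do not, and the even multiplicities are distributed locally. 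Since $W$ is smooth and $f'$ is étale there, $X'^\circ$ is smooth, so Weil and Cartier divisors agree; this gives $g'^*D|_{X'^\circ} = 2E$.

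This step is the main obstacle. One must check that an even divisor on $W$ splits as an even divisor on $D$'s part upstairs: locally on $X'^\circ$, $f'$ identifies a neighborhood with a neighborhood in $W$, and $g'^*D$ is a sub-sum of the components of $f'^*(2F)$. I expect some bookkeeping about components of $D$ and of $f^{-1}f(D)\setminus D$ that meet, and I would handle it by working at the generic points of codimension-one components.

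With $E$ in hand, \Cref{rmk:twisting-degree-by-section-of-2D} writes $s = ut^2$ on $X'^\circ$. By \Cref{prop:twisted-degrees-up-to-squares}, the twisted degree extends to $\deg^u(f',g'^*\rho(E))$ over $W\setminus f'(R_{f'}\cap g'^*D)$ or, more simply, over the set where it is defined. The complement of this set inside $W$ sits in $f'(R_{f'})\cap g^{-1}f(D)$. Because $D$ shares no components with $R_f$, this locus has codimension $\ge 2$ in $W$. By unramifiedness of $\sheafGW$, the class extends uniquely to all of the smooth variety $W$. Since $W$ is essentially $\A^1$-connected, $\sheafGW(W)\cong\GW(k)$ by \Cref{prop:a1connected-constancy} and the remark following it. Hence the values at $w_1$ and $w_2$ agree in $\GW(k)$, which proves the theorem.
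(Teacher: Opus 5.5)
Your overall route is the paper's: pull back to $W$, use parity of $f^*f(D)=D+D'$ to make the pulled-back divisor a square near $D$, write $g'^*s=ut^2$, extend, and invoke essential $\mathbb{A}^1$-connectivity. But the codimension step is false as stated. The set $f'(R_{f'})\cap g^{-1}f(D)$ need not have codimension $\ge 2$ in $W$. The hypothesis only prevents $D$ itself from having a component in $R_f$; a different sheet of $f$ over $f(D)$ may still ramify, so $f(D)$ can be a component of the branch locus.

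For instance, in characteristic $0$ let $f\colon\mathbb{A}^1\to\mathbb{A}^1$ be $x\mapsto x^3-3x$, so $R_f=\{\pm1\}$. Take $D=\{x=-2\}$ and the chamber $g(z)=z^2-2$. Then $f(D)=f(1)=\{-2\}$, and the fiber over $z=0$ is cut out by $(x-1)^2(x+2)$, so $f'$ ramifies over $z=0$. Thus $f'(R_{f'})\cap g^{-1}f(D)=\{z=0\}$ is a divisor. Discarding it would leave $\mathbb{G}_m$, which is not $\mathbb{A}^1$-connected, so your stated justification does not let you extend the class over $W$.

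The repair is to bound the set you actually remove, not a superset of it. Étaleness is stable under base change, so $R_{f'}\subseteq g'^{-1}(R_f)$ and
\[
f'\big(R_{f'}\cap g'^{-1}(D)\big)\subseteq f'\big(g'^{-1}(D\cap R_f)\big)= g^{-1}\big(f(D\cap R_f)\big).
\]
This has codimension $\ge 2$ because $D\cap R_f$ does in $X$, $f$ is finite, and $g$ is flat. This is precisely the paper's first move: delete $Z=f(D\cap R_f)$ from $Y$ at the outset, which keeps $W\setminus g^{-1}(Z)$ essentially $\mathbb{A}^1$-connected. After that, $D$ lies in the étale locus of $f$, $X'$ is smooth along $g'^{-1}(D)$, and no further removal is needed.

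For the step you flagged as the main obstacle, the multiplicity-one claim is unnecessary. What you need is that $g'^*D$ and $g'^*D'$ share no components. This holds because $g'$ is flat, so every component of $g'^{-1}(D)$ (resp.\ $g'^{-1}(D')$) dominates a component of $D$ (resp.\ $D'$), and $D$, $D'$ share none. Evenness of $g'^*D+g'^*D'=2f'^*F$ then forces $g'^*D$ to be even on the smooth locus.
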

\begin{proof}
By hypothesis, $D \cap R_f$ has codimension $2$ in $X$. Since $f$ is finite, $Z$ has codimension $2$ in $Y$. Since $g$ is flat, $g^{-1}(Z)$ has codimension $2$ in $W$. Thus the entire setup, in particular the assumption that $W$ is essentially $\A^1$-connected, is preserved under restricting $Y$ to the open set $Y \setminus Z$. Performing this base change, we reduce to the case where $D \cap R_f$ is empty.

We now consider the pullback diagram
\[\begin{tikzcd}
    X_W \rar["g_X"]\dar["f_W" left]\pb & X\dar["f"] \\
    W\rar["g" below] & Y.
\end{tikzcd}\]
We claim that $g_X^*D$ is the square of a Cartier divisor. To see this, we first note that the singular locus of $X_W$ is 
\[R_g \times_Y R_f = f_W^{-1}(R_g) \cap g_X^{-1}(R_f).\]
Since $D \cap R_f$ is empty, we see that $g_X^{-1}(D)$ does not meet this locus, i.e. is contained in the smooth locus of $X_W$. Thus, it suffices to check that $g_X^*D$ is the square of a Weil divisor. For this: since $g$ is a chamber of $f$ relative to $f(D)$, $g^*(f(D)) = 2 \widetilde{D}$ for some divisor $\widetilde{D}$ on $W$. Pulling back to $X_W$, we get
\[g_X^* f^* (f(D)) = f_W^* g^* (f(D)) = 2 f_W^* \widetilde{D}.\]
On the other hand, since $D$ has no components contained in $R_f$, we may write
\[f^*(f(D)) = D + E\]
for some divisor $E \subseteq X$ with no components in common with $D$. Pulling back along $g_X$ yields
\[g_X^*f^*(f(D)) = g_X^*D + g_X^*E.\]
Combining, we see that every component of $g_X^*D$ has even multiplicity, as claimed, so
\[g_X^*D = 2H\]
for some Cartier divisor $H$ on $X_W$ (we are using that $X_W$ is smooth, in particular locally factorial). Let $t$ be an equation for $H$ and write $g_X^* s = ut^2$ for some global invertible function $u\in H^0(X_W,\mathcal{O}_{X_W}^\times)$. By \Cref{prop:pullback-of-twisted-deg},
\begin{align*}
    g^\ast \deg^s(f,\rho) = \deg^{ut^2}(f_W, g_X^\ast \rho) \text{ in } \sheafGW(W\setminus g^{-1}(f(D))).
\end{align*}
By \Cref{prop:twisted-degrees-up-to-squares}, this extends to the class
\begin{align*}
    \deg^u(f_W, (g_X^\ast\rho)(-2H)),
\end{align*}
defined on all of $W$. Since $W$ is essentially $\A^1$-connected, this class is independent of the choice of $k$-rational point we pull it back to. This class is therefore same on $w_1$ and $w_2$, and since $y_1$ and $y_2$ lie off of $f(D)$, it can be identified with the twisted degree evaluated at $y_1$ or $y_2$.
\end{proof}
\begin{remark}
More generally, we see that $\deg^s(f, \rho) \in \sheafGW(W)$ comes from a unique class $\overline{\beta} \in \GW(k)$. Thus if $w \in W$ is an $L$-point for some finite extension $L/k$, and $y = g(w)$, we obtain $\deg^s(f, \rho)(y) = p^*\overline{\beta}$. We note that $\GW(L)$ in general has less information than $\GW(k)$, so the theorem is most useful when applied to $k$-points.
\end{remark}
\begin{remark}[Twisted analogue of \Cref{ex:proper-geometrically-connected-deg-independent-of-orientation}]\label{rmk:proper-geometrically-connected-twisted-deg-independent-of-everything-except-D}
If $X$ has $H^0(\mathcal{O}_X^\times) = k^\times$ (e.g. $X$ proper and geometrically connected), then any two choices of equation $s$ for $D$ differ by a scalar, so different choices of $s$ affect $\deg^s(f, \rho)$ only by multiplication by a global square class. If in addition $\Pic(X)[2] = 0$, then the choice of isomorphism $\rho$ (given the choice of $D$) likewise only affects $\deg^s(f, \rho)$ by multiplication by a global square class. Thus, when this holds, the only essential choice is the divisor $D$ itself.
\end{remark}

Thus, if $f\colon X \to Y$ is $D$-oriented and $D$ has no components in the ramification locus of $f$, the twisted degree $\deg^s(f, \rho)$ is invariant along points in the same $\A^1$-chamber of $f$ \emph{relative to $D$}. We will leverage this observation to define twisted degrees of the Wronski map for many natural choices of chamber, in particular, enough chambers to cover all the $k$-points of $Y$.

We finish this section by giving a worked example based on the (simplified) structure of the Wronski map. This example is a toy version of \cite[Fig.~10]{LevinsonPurbhoo}.

\begin{figure}[b]
\newcommand{\numsamples}{30}
\newcommand{\thickness}{ultra thick}
\newcommand{\inverseparab}{plot[variable=\y,domain=-1.58:1.58,samples=\numsamples] ({\y*\y-1},{\y/2})}
\newcommand{\parab}{plot[variable=\y,domain=-1.58:1.58,samples=\numsamples] ({\y*\y-1},{-\y/2})}

\newcommand{\inverselowerhyperb}{plot[variable=\t,domain=-3.14:-0.31,samples=\numsamples] ({(\t - 1/\t)/2},{(\t + 1/\t)/4.5})}
\newcommand{\inverseupperhyperb}{plot[variable=\t,domain=0.31:3.14,samples=\numsamples] ({(\t - 1/\t)/2},{(\t + 1/\t)/4.5})}
\newcommand{\lowerhyperb}{plot[variable=\t,domain=-3.14:-0.31,samples=\numsamples] ({-(\t - 1/\t)/2},{(\t + 1/\t)/4.5})}
\newcommand{\upperhyperb}{plot[variable=\t,domain=0.31:3.14,samples=\numsamples] ({-(\t - 1/\t)/2},{(\t + 1/\t)/4.5})}

\newcommand{\rightcircleCW}{plot[variable=\t,domain=-1:1,samples=\numsamples] ({(1 - \t*\t)/(1 + \t*\t)},{2*\t/(1 + \t*\t)/2})}
\newcommand{\leftcircleCCW}{plot[variable=\t,domain=-1:1,samples=\numsamples] ({-(1 - \t*\t)/(1 + \t*\t)},{2*\t/(1 + \t*\t)/2})}


\begin{tabularx}{\textwidth}{YYYY}
\begin{tikzpicture}
\begin{scope}
  \clip (-1.5,-2) rectangle (1.5,2);
  \begin{scope} \clip (0,-2) rectangle (1.5,2); \draw[\thickness,blue,orient=0.8cm] \inverseupperhyperb; \end{scope}
  \begin{scope} \clip (0,-2) rectangle (1.5,2); \draw[blue,orient=0.8cm] \inverselowerhyperb; \end{scope}
  \begin{scope} \clip (-1.5,-2) rectangle (0,2); \draw[\thickness,blue,orient=0.8cm] \upperhyperb; \end{scope}
  \begin{scope} \clip (-1.5,-2) rectangle (0,2); \draw[blue,orient=0.8cm] \lowerhyperb; \end{scope}
\end{scope}
\filldraw[fill=white] (0, 0.44) circle (2pt) node [below] {\tiny $\langle +1 \rangle$}; 
\filldraw[fill=white] (0,-0.44) circle (2pt) circle (3pt) node [above] {\tiny $\langle +1 \rangle$};   

\draw[-{Stealth[length=3mm]},thick] (0,-1.25) -- (0,-2) node[pos=0.5,right=3pt] {$f'$}; 
\draw[thick,orient=1.75cm] (0,-2.3) -- (1.5,-2.3); 
\draw[thick,orient=1.75cm] (0,-2.3) -- (-1.5,-2.3); 
\filldraw[fill=white] (0,-2.3) circle (2pt);   

\end{tikzpicture}
&
\begin{tikzpicture}
\begin{scope}
  \clip (-1.5,-2) rectangle (1.5,2);
  \begin{scope} \clip (0,-2) rectangle (1.5,2); \draw[\thickness,red,orient=1.35cm] \rightcircleCW; \end{scope}
  \begin{scope} \clip (-1.5,-2) rectangle (0,2); \draw[\thickness,red,orient=1.35cm] \leftcircleCCW; \end{scope}
\end{scope}
\filldraw[fill=white] (0, 0.5) circle (2pt) node [below] {\tiny $\langle +1 \rangle$}; 
\filldraw[fill=white] (0,-0.5) circle (2pt) circle (3pt) node [above] {\tiny $\langle -1 \rangle$};   

\draw[-{Stealth[length=3mm]},thick] (0,-1.25) -- (0,-2) node[pos=0.5,right=3pt] {$f'$}; 
\draw[thick,orient=1.75cm] (1.5,-2.3) -- (0,-2.3); 
\draw[thick,orient=1.75cm] (-1.5,-2.3) -- (0,-2.3); 
\filldraw[fill=white] (0,-2.3) circle (2pt);   

\end{tikzpicture}
&
\begin{tikzpicture}
\begin{scope}
  \clip (-1.5,-2) rectangle (1.5,2);
  \begin{scope} \clip (0,0) rectangle (1.5,2); \draw[\thickness,blue,orient=0.94cm] \inverseparab; \end{scope}
  \begin{scope} \clip (-1.5,-2) rectangle (0,2); \draw[\thickness,red,orient=0.94cm] \inverseparab; \end{scope}
  \begin{scope} \clip (0,-2) rectangle (1.5,0); \draw[blue,orient=0.94cm] \parab; \end{scope}
  \draw[dashed] (0,-1) -- (0, 1);          
\end{scope}
\filldraw[fill=white] (0, 0.5) circle (2pt); 
\filldraw[fill=white] (0,-0.5) circle (2pt) circle (3pt);   

\draw[-{Stealth[length=3mm]},thick] (0,-1.25) -- (0,-2) node[pos=0.5,right=3pt] {$f$}; 
\draw[thick,orient=1.6cm] (-1.5,-2.3) -- (1.5,-2.3); 
\filldraw[fill=white] (0,-2.3) circle (2pt);   

\end{tikzpicture}
&
\begin{tikzpicture}
\begin{scope}
  \clip (-1.5,-2) rectangle (1.5,2);
  \draw[blue,orient=0.94cm] \parab;
\end{scope}

\draw[-{Stealth[length=3mm]},thick] (0,-1.25) -- (0,-2) node[pos=0.5,right=3pt] {$f$}; 
\draw[thick,orient=1.6cm] (-1.5,-2.3) -- (1.5,-2.3); 

\fill (0, 0.5) circle (2pt) node [below] {\tiny $\langle -1 \rangle$};
\fill (0, -0.5) circle (2pt) node [above] {\tiny $\langle +1 \rangle$};

\end{tikzpicture} \\
    (a) Chamber $y = z^2$ & (b) Chamber $y = -z^2$ & (c) Twisted orientation & (d) Untwisted orientation
\end{tabularx}

\caption{The toy \Cref{ex:toy}. {\bf Figures (d) and (c):} The original orientation and its twist by $x-1$, which reverses the orientation for the points $x < 1$ (drawn in thick) and requires us to delete $x=1$ (doubled circle) and the fiber containing it (dashed line). The curve divides into two chambers, shown in red and blue. {\bf Figures (a) and (b):} The pullbacks of (c) along $y = z^2$ and $y = -z^2$, showing doubled copies of the right and left halves of (c). The map is oriented over each chamber, and its orientation even extends back over the deleted fiber. The local degrees at $x = \pm1$ are shown. }
\label{fig:chambers}
\end{figure}

\begin{example}[A twisted degree with two chambers] \label{ex:toy}
Let $X = Y = \A^1$ with coordinates $x$ and $y$, and let $f : X \to Y$ be the map $x \mapsto y = x^2 - 1$. For simplicity we may take $k = \R$.

The map is orientable since $\omega_{\A^1}$ is trivial, and the orientation $\rho$ is unique up to a global scalar since $\Pic(X)[2] = 0$ and $H^0(\mathcal{O}_X^\times) \cong k^\times$ (cf. \Cref{ex:proper-geometrically-connected-deg-independent-of-orientation}). Since $Y$ is $\A^1$-connected, $\deg(f, \rho)$ can be found by checking a single fiber; note that the Jacobian is just $\frac{dy}{dx} = 2x$. Taking $y=0$, the local degrees at $x=\pm 1$ are $\langle 2\rangle$ and $\langle -2 \rangle$, so $\deg(f, \rho) = \langle 2, -2 \rangle = \langle 1, -1 \rangle$, the hyperbolic form. 

We now twist the orientation by exactly one of the two points above $y=0$ (see \Cref{fig:chambers}). We let $D = \{x=1\} \subset \A^1$ and consider $\rho$ as a $D$-orientation (since all line bundles are trivial on $\A^1$); we twist by the equation \[s = x-1.\] We note that $s$ is not a square, and $s$ is again unique up to global scalar. We also note that $R_f = \{x=0\}$, so $D \cap R_f = \varnothing$.

Now $\deg^s(f, \rho)$ is defined on $\A^1 \setminus f(D) \cong \mathbb{G}_m$, which is $\mathbb{A}^1$-disconnected, so a priori we might not expect any fiber invariance. In fact, the fiber degrees $\deg^s(f, \rho')(y)$ for $y \in Y(k)$ take on only two possible values, coming from two $\A^1$-chambers of $f$ relative to $D$.

We first take $g : Y' = \A^1 \to \A^1$ by $z \mapsto y = z^2$. Let
\[X' = \A^1 \times_{g,f} \A^1 \cong \frac{k[z, x]}{z^2 = x^2-1} \cong k[x+z, (x+z)^{-1}] \cong \mathbb{G}_m 
\text{ \quad and \quad }
\begin{tikzcd}
    X'\rar["g'"]\dar["f'" left] & \A^1\dar["f" right] \\
    \A^1\rar["g" below] & \A^1.
\end{tikzcd}\]
This is an $\A^1$-chamber of $f$ relative to $D$ since $D$ lies over the ramification point of $g$. Thus $g'^*D$ is a square. Indeed, $x-1$ cuts out a single point on $x' \in X'$, of multiplicity $2$. By \Cref{rmk:twisting-degree-by-section-of-2D}, $\deg^s(f, \rho)$ pulls back and extends to a twisted degree defined on all of $Y'$ (which is $\A^1$-connected!) of the form $\deg^u(f', g'^*\rho)$, where $g'^*s = u t^2$ for some $t$ cutting out $x'$ with multiplicity $1$ and $u$ a unit on $X'$.

We compute the twisted local degree using the fiber $z=0$. A straightforward calculation shows that the ideal of $x'$ has generator $z+x-1$, but since we are free to use different choices of equation $t$ for $x'$ on different charts (cf. \Cref{rmk:twisting-by-a-section-of-2D}), it is more convenient to take $t=z$ near $x=1$ and $t=1$ near $x=-1$. The defining equation $y = (x-1)(x+1)$ yields $z^2 = (x-1)(x+1)$ on $X'$, or simply
\[x-1 = \frac{1}{x+1} z^2.\]
Thus the local degree at $x=1$ is twisted by the local unit $u = s/t^2 = \tfrac{1}{x+1} = \tfrac{1}{2}$, and the local degree at $x=-1$ is twisted by $u = s/t^2 = x-1 = -2$. We find
\[\deg^{g'^*s}(f, g^* \rho')(0) = \langle \tfrac{1}{2} \cdot 2, (-2) \cdot (-2) \rangle = \langle 1, 4 \rangle = \langle 1, 1 \rangle,\]
which is not the hyperbolic form. Indeed, over $\R$, this forces the fiber over every $\R$-point of the chamber to consist always of two real points (see \Cref{fig:chambers}(a)).

In contrast, on the chamber $h$ given by $z \mapsto y = -z^2$, we get
\[h'^*s = \frac{-1}{x+1} z^2.\]
In this case, the twisted local degree at $x = 1$ is now multiplied by $\tfrac{-1}{x+1} = -\tfrac{1}{2}$. At $x=-1$ the twist is unchanged, using $u=s=x-1$ again. We see that
\[\deg^{g'^*s}(f, g^* \rho')(0) = \langle -1, 1 \rangle,\]
agreeing with the untwisted degree (but with the two points contributing opposite values). In this chamber there are fibers containing two real points of opposite signs (for $-1 < y < 0$) and complex fibers (when $y < -1$). See \Cref{fig:chambers}(b).

These two chambers account for every real fiber of $f$, since $y \in Y(\R)$ comes from the first chamber if $y > 0$ and from the second chamber if $y < 0$. By fiber invariance within $\A^1$-chambers, every real fiber of $f$ has one of the two given twisted degrees. This calculation is summarized in the table below.

\begin{center}
\begin{tabular}{c|c|c} & local degree at $x=1$ & local degree at $x=-1$ \\ \hline
untwisted, on $X = \A^1$ & $\langle 2 \rangle$ & $\langle -2 \rangle$ \\ \hline
$s$-twisted, $y = z^2$ & $\langle 1 \rangle$ & $\langle 1 \rangle$ \\ \hline
$s$-twisted, $y = -z^2$ & $\langle -1 \rangle$ & $\langle 1 \rangle$ \\
\end{tabular}
\end{center}
Over an arbitrary field, an analogous calculation on the chamber $z \mapsto y = \lambda z^2$ yields the degree $\langle \lambda, 1 \rangle$.

Topologically, we can understand the degree change as coming from simply twisting the orientation of the source curve by the function $s = x-1$. The two chambers correspond to the positive and negative parts of the real line in $y$, and we see that the Brouwer degree is $0$ in the negative chamber and $2$ in the positive chamber. The real picture is depicted in \Cref{fig:chambers}.
\end{example}

%% file: sections/osculating_schubert_calc.tex
\section{Osculating Schubert calculus}
We recap the basic definitions of Schubert varieties and osculating Schubert calculus, and we formally define the Wronski map. We construct the domino varieties as in \cite{LevinsonPurbhoo}, and the main result in this section is a proof that the Wronski map is $\Zdomino$-oriented (\Cref{lem:wronski-relatively-oriented-relative-to-vertdomino}).

\subsection{Schubert varieties and Grassmannians}

Fix a vector space $W$ over $k$ and an integer $d \leq \dim W = d+m$. We denote by $\Gr_k(d, W)$ the Grassmannian of $d$-dimensional subspaces of $W$, or $\Gr_k(d, d+m)$ if $W$ is understood. Let $F_\bullet = F_0 \subset F_1 \subset F_2 \subset \cdots \subset F_{d+m}$ be a complete flag in $W$, with $\dim F_i = i$. 

Let $\lambda = (\lambda_1 \geq \cdots \geq \lambda_d)$ be a Young diagram contained in a $d \times m$ rectangle, where $\lambda_i$ is the length of the $i$-th row. We write $|\lambda| = \sum_{i=1}^d \lambda_i$ for the size of $\lambda$ and $\lambda^c = (m-\lambda_d, \ldots, m-\lambda_1)$ for the complement of $\lambda$.

The {\bf Schubert cell of type $\lambda$ with respect to $F_\bullet$} is denoted $X_\lambda^\circ F_\bullet$ and consists of those $V \in \Gr_k(d, d+m)$ such that, for $j = 1, \ldots, d+m$,
\[
\dim (V \cap F_j) - \dim (V\cap F_{j-1}) = \begin{cases}
1 & j=m+i-\lambda_i \text{ for some }i \\
0 & \text{else.}
\end{cases}
\]
Its closure is the {\bf Schubert variety of type $\lambda$ with respect to $F_\bullet$} and is denoted $X_\lambda F_\bullet$; it is a subvariety of codimension $|\lambda|$. The following cases will be of particular importance:
\begin{enumerate}
\item $\lambda = \medsquare$ : We have $V \in X_\square F_\bullet$ if and only if $V \cap F_m \ne 0$;
\item $\lambda = \medvertdomino$ : We have $V \in X_\vertdomino F_\bullet$ if and only if $\dim(V \cap F_{m+1}) \geq 2$;
\item $\lambda = \medhorizdomino$ : We have $V \in X_\horizdomino F_\bullet$ if and only if $V \cap F_{m-1} \ne 0$;
\item $\lambda = \medsquare^c$ : We have $V \in X_{\square^c} F_\bullet$ if and only if $F_{d-1} \subset V \subset F_{d+1}$.
\end{enumerate}
We write $[X_\lambda]$ for the cycle class of $X_\lambda F_\bullet$, which is independent of the choice of $F_\bullet$. We recall that $\mathrm{Pic}(\Gr(d, d+m)) \cong \mathbb{Z}$, generated by the class of the Schubert divisor $[X_\square]$.

\subsection{The Wronski map}

\begin{notation}
For a field $k$, we denote by $k_r[z]$ the $(r+1)$-dimensional vector space of univariate polynomials of degree at most $r$. We denote by $\poly{r}{k}$ the affine space of univariate \emph{monic} polynomials of degree $r$ over $k$. As an algebraic variety, $\poly{r}{k} = \A^r_k$.
\end{notation}

\begin{definition}\label{def:wronski} Let $k$ be a field, and let $d,m\ge1$ be arbitrary integers. Then we define the \emph{classical Wronski polynomial} of $d$ univariate polynomials $f_1,\ldots,f_d\in k_{d+m-1}[z]$ to be the polynomial given by the determinant
\begin{align*}
    \Wr(f_1,\ldots,f_d)(z) = \det \begin{vmatrix}
        f_1(z) & f_2(z) & \cdots & f_d(z) \\
        f_1'(z) & f_2'(z) & \cdots & f_d'(z) \\
        \vdots & \vdots & \ddots & \vdots \\
        f_1^{(d-1)}(z) & f_2^{(d-1)}(z) & \cdots & f_d^{(d-1)}(z)
    \end{vmatrix}
\end{align*}
When $f_1,\ldots,f_d$ are linearly independent, we can think about them as determining a point in $\Gr_k(d,d+m)$, and the Wronski polynomial as living in $\mathbb{P}_{dm}[z]$. Thus in characteristic zero, we obtain a \emph{Wronski map} which is a morphism of schemes
\[
\Wr \colon \Gr_k(d,d+m) \to \P^{dm}.
\]
\end{definition}
In positive characteristic, however, by successively taking derivatives we run the risk of coefficients vanishing. It is more desirable in this setting to replace the derivative by the Hasse--Schmidt derivative \cite{Hasse1937} which is an alternative construction, defined as the $k$-linear extension of the map
\begin{align*}
    D^i \colon k_r[z] &\to k_r[z] \\
    t^j &\mapsto \binom{j}{i} t^{j-i}.
\end{align*}
If $\mathrm{char}(k) > i$ or is zero, then $D^i = \frac{1}{i!} \big( \frac{\partial}{\partial t} \big)^i$.
\begin{definition}[{\cite[Equation~(6)]{Laksov}}]
\label{def:Hasse-Wronski}
Let $k$ be a field and $d,m\ge0$ integers. We define the \emph{(Hasse) Wronski polynomial} to be
\[
\Wr(f_1,\ldots,f_d)(t) = \det \left( D^{i-1} f_j\right)_{1\le i,j\le d}.
\]
\end{definition}
Thus if $\mathrm{char}(k) \geq d$, this is just the previous definition divided by $\tfrac{1}{1!2!\cdots (d-1)!}$. We have the following:

\begin{proposition}[{\cite[Theorem~15(i)]{Laksov}}]
\label{prop:Wronski-well-defined}
Let $d,m\ge0$ and let $k$ be a field of characteristic zero or $p\ge d+m$. Then the \emph{(Hasse) Wronski map} gives a well-defined morphism of $k$-varieties
\[
\Wr \colon \Gr_k(d,d+m) \to \P_k^{dm}.
\]
\end{proposition}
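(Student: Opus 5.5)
The plan is to show three things. First, the Hasse Wronskian is a \emph{linear} function of the Pl\"ucker coordinates. Second, its image lies in $k_{dm}[t]$. Third, it never vanishes identically on a $d$-dimensional subspace, and this is where the characteristic hypothesis enters. Together these make $\Wr$ the restriction to $\Gr_k(d,d+m)\subset \P(\wedge^d k_{d+m-1}[t])$ of a linear projection that is defined everywhere on the Grassmannian, hence a morphism.

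\emph{Linearity.} The expression $\det(D^{i-1}f_j)_{i,j}$ is multilinear and alternating in the columns $f_1,\ldots,f_d$, because each $D^{i-1}$ is $k$-linear. It therefore factors through a $k$-linear map
\[
\Phi\colon \wedge^d k_{d+m-1}[t] \to k[t],\qquad t^{a_1}\wedge\cdots\wedge t^{a_d}\mapsto \det\Big(\tbinom{a_j}{i-1}\Big)_{i,j}\, t^{\sum_j a_j-\binom{d}{2}}.
\]
The formula on basis vectors holds because every entry of column $j$ is a multiple of $t^{a_j-i+1}$, so the exponents add up to $\sum_j a_j-\binom d2$. Changing the basis of $V$ by $g\in \GL_d$ multiplies $\Phi$ by $\det g$, so the class $[\Phi(V)]\in\P(k[t])$ depends only on $V$. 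Concretely, $\Wr(V)=\sum_I p_I(V)\,\Phi(t^{I})$ in the Pl\"ucker coordinates $p_I$. Since the $a_j$ are distinct and at most $d+m-1$, we have $\sum_j a_j\le \sum_{i=0}^{d-1}(d+m-1-i)=d(d+m-1)-\binom d2$. Hence the exponent is at most $d(d+m-1)-2\binom d2=dm$, and $\Phi$ lands in $k_{dm}[t]\cong k^{dm+1}$.

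\emph{Nonvanishing.} A linear map $\P(\wedge^d W)\dashrightarrow\P^{dm}$ restricts to a morphism on $\Gr_k(d,d+m)$ as long as its base locus misses the Grassmannian. The base locus is cut out by the vanishing of all coefficients of $\Phi$, so it suffices to check that $\Phi(V)\neq 0$ for every $V$ over every field extension of $k$ (the characteristic hypothesis is stable under extension). Given $V$, Gaussian elimination gives a basis $f_1,\ldots,f_d$ whose orders of vanishing at $t=0$ are distinct, say $a_1<\cdots<a_d$, with $f_j=c_jt^{a_j}+(\text{higher terms})$ and $c_j\neq0$. Expand $\Wr(f_1,\ldots,f_d)$ multilinearly over monomials $t^{b_j}$ with $b_j\ge a_j$. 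By the basis computation above, each term is a multiple of $t^{\sum_j b_j-\binom d2}$. Hence the coefficient of $t^{\sum_j a_j-\binom d2}$ comes only from the choice $b_j=a_j$ for all $j$, and equals
\[
\prod_j c_j\cdot\det\Big(\tbinom{a_j}{i-1}\Big).
\]

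\emph{The integer determinant.} Over $\Z$, column operations reduce $\binom{a}{i-1}$ to $a^{i-1}/(i-1)!$, giving the identity
\[
\det\Big(\tbinom{a_j}{i-1}\Big)=\frac{\prod_{j<l}(a_l-a_j)}{\prod_{i=1}^{d-1}i!}.
\]
Both sides are integers. Every factor on the right is a positive integer less than $d+m$. If $\operatorname{char}k=p\ge d+m$, then $p$ does not divide the numerator $\prod_{j<l}(a_l-a_j)$, so it cannot divide the integer quotient either. Thus the coefficient is nonzero in $k$; in characteristic $0$ this is immediate.

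\emph{Main obstacle.} I expect the delicate step to be this last one: justifying the integer identity for the binomial determinant and seeing that $p\ge d+m$ is exactly what makes it a unit. A secondary point is making the morphism scheme-theoretic, i.e.\ checking that $\Phi$ applied to the universal subbundle gives a nowhere-vanishing map $\wedge^d\mathcal{S}\to k_{dm}[t]\otimes\mathcal{O}$. Since nonvanishing is a fiberwise condition, this follows from the pointwise argument above.
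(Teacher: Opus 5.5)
Your proof is correct, but it takes a different route from the paper's. The paper shows that $N=\{V : \Wr(V)=0\}$ is empty by a group-theoretic reduction. It observes that $N$ is a closed subvariety of $\Gr_k(d,d+m)$ stable under the $\PGL_2$-action on binary forms, so if $N$ were nonempty it would contain a Borel-fixed point. The characteristic hypothesis is then used to show that the only point fixed by the torus $t\mapsto(1+\epsilon)t$ and the translation $t\mapsto t+\epsilon$ is $\mathrm{span}\{1,t,\ldots,t^{d-1}\}$. Its Hasse Wronski matrix is unitriangular, with determinant $1$.

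You instead treat every $V$ directly. You take a basis adapted to the vanishing sequence $a_1<\cdots<a_d$ at $t=0$ and isolate the coefficient of $t^{\sum_j a_j-\binom d2}$. You identify it with $\prod_j c_j\cdot\prod_{j<l}(a_l-a_j)/\prod_{i<d} i!$, which is a unit because every difference is at most $d+m-1<p$.

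Your route is more elementary: it needs neither the $\PGL_2$-equivariance of the Hasse Wronskian nor the fixed-point theorem. It also proves more. It shows that the order of vanishing of $\Wr(V)$ at $0$ is exactly $\sum_j a_j-\binom d2=|\lambda|$ for $V$ in the corresponding Schubert cell. This is the same leading-term computation the paper carries out separately, via Cauchy--Binet, in the proof of \Cref{prop:wronski-root-schubert}. You also make the scheme-theoretic point explicit: you realize $\Wr$ as a linear projection from Pl\"ucker space whose center misses the Grassmannian, which the paper leaves implicit. In exchange, the paper's argument avoids evaluating the binomial determinant by reducing to a single explicit subspace.

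One harmless wording slip: with $i$ indexing rows, reducing $\binom{a_j}{i-1}$ to $a_j^{i-1}/(i-1)!$ uses row operations, not column operations.
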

\begin{proof} Let $N \subseteq \Gr_k(d,d+m)$ be the locus where the Wronski of the input polynomials vanishes:
\[
    N = \{V\in \Gr_k(d,d+m) \mid \Wr(V) = 0\}.
\]
We show that $N$ is empty. We can identify $k_{d+m-1}[t] \cong H^0(\P^1_k, \mathcal{O}(d+m-1))$, at which point it is clear after homogenizing that $\GL_2$ acts on binary forms via $\Sym^{d+m-1}$. From this perspective, we see that $\Gr_k(d,d+m)$ inherits a $\PGL_2$-action, and $N$ is preserved by this action. 

Thus, if $N$ is nonempty, by the Borel fixed point theorem it admits a Borel fixed point, i.e. one that is fixed by the torus action $t \mapsto (1+\epsilon) t$ and the translation $t \mapsto t+\epsilon$ (where we may take $\epsilon^2 = 0$). On monomials, these actions send $t^i \mapsto (1+i \, \epsilon) t^i$ and $t^i \mapsto t^i + i \, \epsilon\, t^{i-1}$. Our characteristic assumptions now imply that if $V$ is fixed by the torus, then $V$ is spanned by monomials, and if $V$ is moreover fixed by translation, then $t^k \in V$ implies $t^{k-1} \in V$ for any $k$. Thus $V = \text{span}\{t^{d-1}, \ldots, 1\}$ is the unique fixed point. But its Wronski matrix is then upper-triangular with determinant $1 \ne 0$. Hence $N$ is empty.
\end{proof}
In cases where the classical Wronski map is well-defined, it agrees with the Hasse Wronski map, hence we refer to this as the \emph{Wronski map} without loss of generality. We make the standing assumption throughout that our base field $k$ is characteristic zero or characteristic $\ge d+m$.

\begin{proposition} Let $k$ be a field of characteristic $0$ or $\ge d+m$. Then the Wronski map is flat, finite and lci, and of degree $\#\SYT(d\times m)$, where $\SYT(d\times m)$ is the number of standard Young tableaux of rectangular shape $d\times m$.
\end{proposition}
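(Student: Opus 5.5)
We prove the four properties in the order finite, flat, lci, degree. For finiteness we use \Cref{prop:Wronski-well-defined}: the Wronski map $\Wr\colon \Gr_k(d,d+m)\to\P^{dm}$ is a morphism defined by global sections of $\mathcal{O}(1)$ on the Grassmannian, the Plücker line bundle. Indeed, expanding the Hasse--Wronskian determinant shows that each coefficient of $\Wr(V)$ is a $k$-linear combination of Plücker coordinates of $V$. So $\Wr$ is the composite of the Plücker embedding with a linear projection $\P(\wedge^d k_{d+m-1}[t])\dashrightarrow \P^{dm}$. \Cref{prop:Wronski-well-defined} says the base locus of this projection misses $\Gr_k(d,d+m)$. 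A linear projection that is regular on a closed subvariety is finite on it, because $\Wr^*\mathcal{O}(1)=\mathcal{O}(1)$ is ample, so $\Wr$ is proper and quasi-finite: no curve can be contracted by a map pulling back an ample bundle to an ample bundle. Hence $\Wr$ is finite.

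Flatness and lci then come for free from the miracle flatness theorem. The source $\Gr_k(d,d+m)$ and the target $\P^{dm}$ are both smooth of dimension $dm$, and a finite, hence quasi-finite and equidimensional, morphism between regular schemes of the same dimension is flat: the source is Cohen--Macaulay and the target regular. A morphism between smooth $k$-varieties is always lci, since it factors as the graph embedding, a regular embedding into $\Gr\times\P^{dm}$, followed by a smooth projection.

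For the degree, note that a finite flat map has locally constant fiber length, and $\P^{dm}$ is connected. So the degree equals $\deg \Wr^*\mathcal{O}(1)^{dm}$ divided by $\deg\mathcal{O}(1)^{dm}$ on $\P^{dm}$. The latter is $1$, and the former is $\int_{\Gr}[X_\square]^{dm}$, the degree of the Grassmannian in its Plücker embedding. By Pieri's rule, iterating $[X_\square]\cdot[X_\lambda]=\sum_{\lambda^+}[X_{\lambda^+}]$ over $dm$ steps from the empty diagram to the full rectangle counts the chains of Young diagrams adding one box at a time. These chains are exactly the standard Young tableaux of shape $d\times m$, so the degree is $\#\SYT(d\times m)$. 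This computation is characteristic-free, since intersection numbers on Grassmannians are.

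The main obstacle is the finiteness step in positive characteristic. It relies on the identification $\Wr^*\mathcal{O}_{\P^{dm}}(1)\cong\mathcal{O}_{\Gr}(1)$, meaning the coefficients of the Hasse--Wronskian really are linear in Plücker coordinates with no hidden common factor. We would verify this via the Cauchy--Binet expansion of $\det(D^{i-1}f_j)$ in the monomial basis. Each $d\times d$ minor of the coefficient matrix appears multiplied by a Wronskian of monomials $t^{a_1},\dots,t^{a_d}$, which is a scalar times $t^{\sum a_i-\binom d2}$. This exhibits the desired linear combination of Plücker coordinates.
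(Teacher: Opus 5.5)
Your proposal is correct and follows essentially the same route as the paper: finiteness from properness together with ampleness of $\Wr^*\mathcal{O}(1)\cong\mathcal{O}_{\Gr}(1)$, flatness by miracle flatness, lci because source and target are both smooth, and the degree as the top self-intersection of the Schubert divisor class. You fill in details the paper leaves implicit: the Pieri-rule count of standard tableaux, and the Cauchy--Binet check that the Hasse--Wronskian coefficients are linear in Pl\"ucker coordinates, which the paper only asserts in its orientation lemma.
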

\begin{proof} Since $\Wr$ is proper and $\Wr^\ast \mathcal{O}(1) = [X_\square]$ is ample, we can conclude that $\Wr$ is finite. It is lci by \cite[0E9K]{Stacks}, and it is flat by miracle flatness \cite[00R4]{Stacks}. The degree is therefore the self-intersection of the class $[X_\square]$, which is well-known to be $\#\SYT(d \times m)$.
\end{proof}

Following \cite{Laksov}, we make the following definition:

\begin{definition}\label{def:osc-flag} Let $k$ be a field of characteristic zero or characteristic $\ge d+m$. For any $t_0\in \A^1$ we define the \emph{osculating flag}
\[
    \Osc_i(t_0) := \{f\in k_{d+m-1}[t] \mid (D^{j}f)(t_0) = 0 \text{ for all }j\le d+m-i\}.
\]
Under our characteristic assumptions, this is, equivalently, the subspace of polynomials divisible by $(t-t_0)^{d+m-i}$.
For $t_0 = \infty \in \P^1$, we define
\[
    \Osc_i(\infty) = k_{i-1}[t].
\]
\end{definition}

As $t \in \mathbb{P}^1$ varies, this definition makes $\Osc_\bullet$ a flag of subbundles of the trivial vector bundle $k_{d+m-1}[t] \otimes \mathcal{O}_{\mathbb{P}^1} \cong \mathcal{O}_{\mathbb{P}^1}^{\oplus d+m}$, in particular $\Osc_i(t_0)$ is the image of the map
\[k_{i-1}[t] \otimes \mathcal{O}_{\mathbb{P}^1}(-(d+m-i)) \xrightarrow{\cdot(t-t_0)^{d+m-i}} k_{d+m-1}[t] \otimes \mathcal{O}_{\mathbb{P}^1}.\]
We note therefore that 
\begin{equation} \label{eqn:osculating-flag-as-bundle}
\Osc_i \cong \mathcal{O}_{\mathbb{P}^1}(-(d+m-i))^{\oplus i}.
\end{equation}

Osculating Schubert calculus is in general interested in solutions to Schubert problems defined with respect to osculating flags.

\subsection{The discriminant, domino varieties, and the branch locus} Generically, a point in the target space of the Wronski map gives a Wronski polynomial with no repeated roots. We describe the closed locus of $\P^n$ of points that do not have this property:

\begin{notation} Let $\Delta_n \subseteq \mathbb{P}^n$ denote the \emph{discriminant locus}, defined as those univariate polynomials of degree $n$ that have a repeated root. We note that $\Delta_n$ is a hypersurface of degree $2n-2$.
\end{notation}

Fixing $t_0 \in \P^1$, we can likewise consider the subspace of polynomials with a root at $t_0$ of some given order. The preimage of this locus, as a cycle, admits the following combinatorial description in terms of Schubert varieties with respect to the osculating flag at $t_0$.

\begin{proposition}\label{prop:wronski-root-schubert}
Let $k$ be of characteristic zero or $\ge d+m$. Let $V \in \Gr_k(d, d+m)$ and $t_0 \in \P^1$. Then $\Wr(V)$ has a root of order $k$ at $t_0$ if and only if $V \in X_\lambda \Osc_\bullet(t_0)$ for some $\lambda$ of size $k$.

Moreover, letting $W'(t_0) \subset \P^{dm}$ be the subspace of polynomials divisible by $(t-t_0)^k$, we have the equality of cycles
\[[\Wr^{-1}(W'(t_0))] = \sum_{|\lambda| = k} |\mathrm{SYT}(\lambda)| \cdot [X_\lambda \Osc_\bullet(t_0)].\]
\end{proposition}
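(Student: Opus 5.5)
Since $\Wr$ is $\PGL_2$-equivariant, we may move $t_0$ to $0$ (or to $\infty$). Both statements are local on the target near the point $t_0$, so it suffices to treat $t_0=0$. The plan has two parts: a set-theoretic statement, obtained by computing orders of vanishing directly, and a cycle statement, obtained by deforming the subspace $W'(t_0)$ to a generic complete intersection of hyperplanes and intersecting with Schubert varieties.

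\emph{Set-theoretic part.} Let $V\in\Gr_k(d,d+m)$ and $t_0=0$. Choose a basis $f_1,\dots,f_d$ of $V$ in echelon form with respect to the order of vanishing at $0$. The orders $a_1<\dots<a_d$ are then distinct. A Schubert cell with respect to $\Osc_\bullet(0)$ is determined by the jumps $\dim(V\cap \Osc_j(0))$. Since $\Osc_j(0)$ consists of the polynomials divisible by $t^{d+m-j}$, $V$ lies in the cell $X^\circ_\lambda\Osc_\bullet(0)$ with $a_i=\lambda_{d+1-i}+d-i$ after reindexing, so $|\lambda|=\sum_i a_i-\binom d2$.

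Next I show that $\Wr(f_1,\dots,f_d)$ vanishes at $0$ to order exactly $\sum a_i-\binom d2$. The lowest-order term of the Hasse Wronskian is the Hasse Wronskian of the leading monomials $c_it^{a_i}$. This equals $\prod c_i\cdot\det\big(\binom{a_j}{i-1}\big)\,t^{\sum a_i-\binom d2}$, where the determinant is a Vandermonde-type quantity $\prod_{i<j}(a_j-a_i)/\prod_i i!$, up to the normalization of $D^i$. This constant is nonzero because $a_j-a_i<d+m\le p$. Hence $\Wr(V)$ has a root of order exactly $|\lambda|$ at $0$, and the first claim follows. The point at infinity is handled by the same argument after applying the symmetry $t\mapsto 1/t$.

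\emph{Cycle statement.} Both sides are cycles of codimension $k$; the left side is flat pullback along the finite flat map $\Wr$. By the set-theoretic part, $\Wr^{-1}(W'(t_0))=\bigcup_{|\lambda|=k}X_\lambda\Osc_\bullet(t_0)$ set-theoretically, so it remains to identify the multiplicities. Here I would compare classes rather than compute scheme structures directly. The class $[W'(t_0)]$ is $h^k$, a linear space of codimension $k$. Since $\Wr^*\mathcal O(1)=[X_\square]$, we get $[\Wr^{-1}(W')]=[X_\square]^k=\sum_{|\lambda|=k}|\SYT(\lambda)|[X_\lambda]$ by iterated Pieri. The Schubert varieties $X_\lambda\Osc_\bullet(t_0)$ of size $k$ are irreducible components of codimension $k$, and their classes $[X_\lambda]$ are linearly independent in the Chow group. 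A cycle supported on the union of these components and having this class must therefore have coefficient $|\SYT(\lambda)|$ on $X_\lambda\Osc_\bullet(t_0)$.

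\emph{Main obstacle.} I expect the Vandermonde-type nonvanishing in positive characteristic to be the delicate point: one must check that the constant $\prod_{i<j}(a_j-a_i)/\prod i!$ remains a unit for all allowed exponents when $p\ge d+m$. A secondary issue is that the Chow comparison only works because all the components have the right codimension. This is guaranteed here because the components are exactly the Schubert varieties, so no excess components can occur.
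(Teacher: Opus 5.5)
Your proposal is correct and follows essentially the same route as the paper: the paper also isolates the lowest-order term of the Hasse Wronskian (via a Cauchy--Binet expansion, which is equivalent to your column-multilinearity argument), notes that the Vandermonde constant $\prod_{i<i'}(k_i-k_{i'})/\prod_i i!$ is a unit when $p\ge d+m$, and then pins down the multiplicities by comparing with $[X_\square]^k=\sum_{|\lambda|=k}|\SYT(\lambda)|\,[X_\lambda]$ and using linear independence of Schubert classes. One small slip: the increasing vanishing sequence is $a_i=\lambda_{d+1-i}+i-1$, not $\lambda_{d+1-i}+d-i$; this is harmless because you only use $\sum_i a_i=|\lambda|+\binom d2$.
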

\begin{proof} This was stated over the complex numbers in \cite[Theorem 2.5 and Corollary 2.6]{purbhooJeuTaquinMonodromy2010}, and is essentially the content of \cite[Theorem~15]{Laksov}. We provide a self-contained proof here.

Let $V\in \Gr_k(d,d+m)$ and suppose $V \in X_\lambda^\circ \Osc_\bullet(t_0)$. Let $a_j = j-1 + \lambda_{d+1-j}$. (The sequence $a_1 < \cdots < a_d$ is the \emph{vanishing sequence} of $V$ at $t_0$). Then $V$ has a basis $\{f_1, \ldots, f_d\}$ of the form
\begin{align*}
    f_j &= \sum_{k=a_j}^{d+m-1} b_{jk} (t-t_0)^k, \quad \text{with } b_{j,a_j} = 1, \\
    &= (t-t_0)^{a_j} + \text{(higher terms)}.
\end{align*}
We show that the order of vanishing of $\Wr(V)$ at $t_0$ is $|\lambda|$. The matrix $(D^{i-1}f_j)$ factors as $B\cdot M$, where $B = (b_{jk})$ is $d\times (d+m)$ and $M = (D^{i-1}(t-t_0)^{k})$ is $(d+m)\times d$. Via the Cauchy--Binet formula we can expand its determinant as
\begin{align*}
    \Wr(V) &= \sum_{K = \{k_1 < \cdots < k_d\}} \det(B_K) (t-t_0)^{(\sum_{i=1}^d k_i) - \binom{d}{2}} \frac{\prod_{i<i'}(k_i - k_{i'})}{\prod_{i=0}^{d-1}i!}.
\end{align*}
Since $1\le k_i \le d+m-1$, we have that $k_i - k_{i'}$ never vanishes in characteristic $p$. The order of vanishing is then governed by when $\sum_{i=1}^d k_i$ is minimized, subject to the constraint that $\det(B_K)\ne 0$. This occurs at $(k_1,\ldots,k_d) = (a_1,\ldots, a_d)$, with $\det(B_K) = 1$ by upper-triangularity.

Therefore set-theoretically we have equality
\begin{align*}
    \Wr^{-1}(W'(t_0)) = \bigcup_{|\lambda| = k} X_\lambda \Osc_\bullet(t_0).
\end{align*}
Define $H(t_i) \subseteq k_{md}[t]$ to be the subspace of polynomials divisible by $t-t_i$. Then $W'(t_0)$ is rationally equivalent to $W'' = H(t_1) \cap \cdots \cap H(t_k)$, where $t_1, \ldots, t_k$ are distinct. By flatness, we have the equality of cycle classes
\[
    [\Wr^{-1}(W'(t_0))] = [\Wr^{-1}(W'')] = \prod [\Wr^{-1}(H(t_i))] = \Wr^*[\mathcal{O}(1)]\medsquare^k = \sum_{|\lambda| = k} |\mathrm{SYT}(\lambda)| [X_\lambda].
\]
Since the Schubert classes form a basis for the Chow group, we conclude that the multiplicity of $X_\lambda \Osc_\bullet(t_0)$ in the fundamental cycle of $\Wr^{-1}(W'(t_0))$ is $|\SYT(\lambda)|$.
\end{proof}

The preimage of the entire Wronski map can also be described, using what are called \emph{domino varieties}, following \cite[\S3.2]{LevinsonPurbhoo}.

\begin{definition} Given a Young diagram $\lambda \subseteq d\times m$, we define $Z_\lambda$ to be the union of the Schubert varieties of type $\lambda$ over all times $t\in \P^1$:
\[
Z_\lambda := \bigcup_{t\in \P^1_k} X_\lambda \Osc_\bullet(t) \subseteq \Gr(d, d+m).
\]
This is an irreducible closed subvariety of codimension $|\lambda| - 1$.
\end{definition}

\begin{terminology} For $\lambda = \medvertdomino$ or $\lambda = \medhorizdomino$, we call the associated varieties $\Zdomino$ and $Z_\horizdomino$ the \emph{domino varieties}. These are important as they describe the preimage of the discriminant under the Wronski map.
\end{terminology}

\begin{proposition}\label{prop:preimage-of-discriminant-locus} The scheme-theoretic preimage of $\Delta_{dm}$ is the reduced union
\[
\Wr^{-1}(\Delta_{dm}) = \Zdomino \cup Z_\horizdomino,
\]
and both $\Zdomino$ and $Z_\horizdomino$ map surjectively onto $\Delta_{dm}$ by the Wronski map.
\end{proposition}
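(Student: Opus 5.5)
We would prove \Cref{prop:preimage-of-discriminant-locus} in two steps. First we identify the preimage set-theoretically using \Cref{prop:wronski-root-schubert}. Then we upgrade to a scheme-theoretic (reducedness) statement by a multiplicity count using the degree of $\Delta_{dm}$.

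\textbf{Set-theoretic statement.} A point $V$ lies in $\Wr^{-1}(\Delta_{dm})$ if and only if $\Wr(V)$ has a root of order $\ge 2$ at some $t_0\in\P^1$. Roots at $\infty$ are included, via a drop in degree, using the $\PGL_2$-equivariance from the proof of \Cref{prop:Wronski-well-defined}. By \Cref{prop:wronski-root-schubert} this happens exactly when $V\in X_\lambda\Osc_\bullet(t_0)$ for some $|\lambda|=2$, i.e. $\lambda\in\{\vertdomino,\horizdomino\}$. Taking the union over $t_0$ gives $\Wr^{-1}(\Delta_{dm})=\Zdomino\cup Z_\horizdomino$ as sets.

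\textbf{Surjectivity.} Each $Z_\lambda$ is irreducible of codimension $1$. Since $\Wr$ is finite, $\Wr(Z_\lambda)$ is a closed irreducible subset of $\Delta_{dm}$ of dimension $dm-1$. As $\Delta_{dm}$ is an irreducible hypersurface, $\Wr(Z_\lambda)=\Delta_{dm}$.

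\textbf{Reducedness.} By flatness, $\Wr^{*}[\Delta_{dm}]=(2dm-2)\,[X_\square]$ in $\Pic(\Gr)\cong\Z$, and the cycle $[\Wr^{-1}(\Delta_{dm})]$ equals $a[\Zdomino]+b[Z_\horizdomino]$ with $a,b\ge1$. The plan is to compute the classes $[Z_\lambda]$ and show that $[\Zdomino]+[Z_\horizdomino]$ already equals $(2dm-2)[X_\square]$, which forces $a=b=1$. Since $\Wr$ is flat over a smooth target, the preimage of the reduced divisor $\Delta_{dm}$ is Cohen--Macaulay and hence has no embedded components. Multiplicity one along each component then gives generic reducedness, and therefore reducedness.

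To compute $[Z_\lambda]$, we would realize $Z_\lambda$ as the pushforward of the relative Schubert variety $\widetilde Z_\lambda=\{(t,V): V\in X_\lambda\Osc_\bullet(t)\}\subset\P^1\times\Gr$, which is birational onto its image. The $\P^1$-family of flags has $\Osc_i\cong\mathcal{O}(-(d+m-i))^{\oplus i}$ by \Cref{eqn:osculating-flag-as-bundle}. A degeneracy-locus (Kempf--Laksov / Thom--Porteous) formula for $[\widetilde Z_\lambda]$ in terms of the Chern classes of this flag and of the tautological bundle, followed by pushforward along $\P^1$, picks out the coefficient of the hyperplane class of $\P^1$. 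The expected answers have the shape $[\Zdomino]=\alpha[X_\square]$ and $[Z_\horizdomino]=\beta[X_\square]$ with $\alpha+\beta=2dm-2$; for instance, something like $\alpha=(d-1)(m+1)$ and $\beta=(m-1)(d+1)$, which sum to $2dm-2$.

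\textbf{Main obstacle.} We expect the main obstacle to be this class computation, in particular the positive-characteristic issues. These are generic transversality of $\widetilde Z_\lambda$, birationality of $\widetilde Z_\lambda\to Z_\lambda$, and nonvanishing of the relevant binomial and Vandermonde factors, which should hold for $\mathrm{char}\ge d+m$ as in the proof of \Cref{prop:wronski-root-schubert}. As a cross-check, we would verify multiplicity one directly at a generic point of each $Z_\lambda$. For this we restrict to a local curve transverse to $Z_\lambda$, on which the Wronskian has a simple double root splitting into two simple roots at first order. A local computation with the basis $(t-t_0)^{a_j}+\cdots$, together with the Cauchy--Binet expansion, shows that the discriminant vanishes to order exactly one.
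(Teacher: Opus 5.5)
Your argument is correct. The set-theoretic identification matches the paper's, and your dimension argument for surjectivity supplies a step the paper leaves implicit. For reducedness, however, you take a genuinely different route.

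The paper argues locally from the multiplicity statement in \Cref{prop:wronski-root-schubert}. It restricts to the codimension-two subspace $W'(t)$ of polynomials divisible by $(z-t)^2$. There the cycle $[\Wr^{-1}(W'(t))]$ equals $[X_{\vertdomino}\Osc_\bullet(t)]+[X_{\horizdomino}\Osc_\bullet(t)]$, each with multiplicity $|\SYT(\lambda)|=1$. From this it concludes generic reducedness, and Cohen--Macaulayness of a Cartier divisor on $\Gr$ upgrades that to reducedness.

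You argue globally instead. The multiplicities $a,b\ge 1$ are forced to equal $1$ by the numerical identity $(d-1)(m+1)+(d+1)(m-1)=2dm-2=\deg\Delta_{dm}$. The classes you need are exactly those of \Cref{prop:kempf-laksov-degree-calculation}, which the paper proves later, including positive-characteristic birationality of $\mathcal{X}_\lambda\to Z_\lambda$ via its divisibility argument. That proof does not use the present proposition, so there is no circularity.

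What your route buys is that multiplicity one becomes a purely numerical check. This sidesteps the passage from generic reducedness of the slices $\Wr^{-1}(W'(t))$ to generic reducedness of $\Wr^{-1}(\Delta_{dm})$, which the paper treats only briefly. What it costs is dependence on the full Kempf--Laksov class computation and on the reduced discriminant having degree exactly $2dm-2$. The paper's route reuses only the Cauchy--Binet multiplicity count already in hand.

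When $d=1$ or $m=1$, one domino variety is empty. Your numerical argument still goes through with a single term, though surjectivity is then only claimed for the nonempty one.
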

\begin{proof}
Let $V \in \Gr_k(d,d+m)$. By Proposition \ref{prop:wronski-root-schubert}, $\Wr(V)$ has a double root at $t \in \P^1$ if and only if $V \in X_\lambda \Osc_\bullet(t)$ for some $\lambda$ with $|\lambda| = 2$. This gives the statement above as an equality of sets. Letting $t$ be arbitrary and restricting further to the space $W'(t)$ of polynomials divisible by $(z-t)^2$, we see that $[\Wr^{-1}(W'(t))]$ is the cycle $[X_{\vertdomino}\Osc_\bullet(t)] + [X_{\horizdomino} \Osc_\bullet(t)]$ and is generically reduced. It follows that $\Wr^{-1}(\Delta_{dm})$ is generically reduced, hence reduced (since a Cartier divisor on a smooth variety is Cohen--Macaulay). Thus equality holds scheme-theoretically.
\end{proof}

We will prove that the Wronski map is $\Zdomino$-oriented. In order to show this, we first need to know the divisor class of $\Zdomino$:

\begin{proposition}\label{prop:degree-domino-variety}
The class of $\Zdomino \subseteq \Gr_k(d,d+m)$ is $(d-1)(m+1) \cdot [X_\square]$.
\end{proposition}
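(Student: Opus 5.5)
We will compute $[\Zdomino]$ by realizing $\Zdomino$ as the pushforward of a degeneracy locus on $\Gr_k(d,d+m)\times\P^1$. Consider the incidence variety
\[
I := \{(V,t) \mid \dim(V\cap \Osc_{m+1}(t)) \ge 2\} \subseteq \Gr_k(d,d+m)\times \P^1,
\]
whose image under the first projection $\pi$ is $\Zdomino$ by definition. Let $S$ be the pullback of the tautological subbundle (rank $d$). By \Cref{eqn:osculating-flag-as-bundle}, $\Osc_{m+1}\cong \mathcal{O}_{\P^1}(-(d-1))^{\oplus(m+1)}$, so $F := (k_{d+m-1}[t]\otimes\mathcal{O})/\Osc_{m+1}$ is a rank $d-1$ bundle on $\P^1$. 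Then $I$ is the locus where the composite $\varphi\colon S\to F$ has rank $\le d-2$. Its expected codimension is $(d-(d-2))((d-1)-(d-2)) = 2$, which matches $\dim I = \dim \Zdomino$, since $\Zdomino$ has codimension $1$ in the Grassmannian and $\P^1$ adds one dimension.

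The key steps are as follows.
\begin{enumerate}
\item Check that $I$ has the expected codimension $2$ and is generically reduced. Fibrewise over $t$, it is the Schubert variety $X_{\vertdomino}\Osc_\bullet(t)$, which is irreducible of codimension $2$. This also holds in our characteristic range, since $\Osc_\bullet(t)$ is an honest complete flag there.
\item Apply the Thom--Porteous formula:
\[
[I] = \det\begin{pmatrix} c_1 & c_2\\ 1 & c_1\end{pmatrix} = c_1^2 - c_2, \qquad c_i := c_i(F - S).
\]
\item Compute the Chern classes. Write $h$ for the point class on $\P^1$ and $\sigma_\lambda$ for the Schubert classes. We have $c(-S) = c(Q) = 1+\sigma_1+\sigma_2+\cdots$ and $c(F) = (1-(d-1)h)^{-(m+1)} = 1+(m+1)(d-1)h$, using $h^2=0$. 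Hence
\[
c_1 = \sigma_1 + (m+1)(d-1)h,\qquad c_2 = \sigma_2 + (m+1)(d-1)h\,\sigma_1.
\]
\item Simplify using Pieri: $c_1^2 - c_2 = \sigma_{11} + (m+1)(d-1)\,h\,\sigma_1$.
\item Push forward along $\pi$. This kills the term without $h$ and sends $h\cdot\sigma_1\mapsto\sigma_1$, giving $\pi_*[I] = (d-1)(m+1)[X_\square]$.
\end{enumerate}

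It then remains to show that $\pi\colon I\to\Zdomino$ is birational, so that $\pi_*[I]=[\Zdomino]$. I expect this to be the main obstacle. The plan is to show that a general $V\in\Zdomino$ lies in $X_{\vertdomino}\Osc_\bullet(t)$ for exactly one $t$, reduced. To do this, pick a general point $V\in X^\circ_{\vertdomino}\Osc_\bullet(t_0)$. By \Cref{prop:wronski-root-schubert}, $\Wr(V)$ has a double root exactly at $t_0$ and simple roots elsewhere, since $V$ lies on no other codimension-$2$ osculating Schubert variety. So $t_0$ is uniquely determined by $V$. Reducedness of the fibre can be checked by a tangent space computation at a point such as a monomial-type $V$ with $t_0=0$. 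Alternatively, it follows from \Cref{prop:preimage-of-discriminant-locus}, since $\Wr^{-1}(\Delta_{dm})$ is reduced and maps generically injectively via $V\mapsto t_0$.

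As a consistency check, run the same computation for $Z_\horizdomino$ (with $\Osc_{m-1}$, rank condition $\varphi$ on $S\to(\cdot)/\Osc_{m-1}$ non-injective). This gives $(d+1)(m-1)[X_\square]$. Summing the two classes gives $(2dm-2)[X_\square] = \Wr^*[\Delta_{dm}]$, in agreement with \Cref{prop:preimage-of-discriminant-locus} and the degree $2dm-2$ of the discriminant.
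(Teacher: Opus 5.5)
Your proof is correct and follows the paper's strategy. The paper proves the general formula for $[Z_\lambda]$ (\Cref{prop:kempf-laksov-degree-calculation}) by applying Kempf--Laksov to the universal osculating Schubert variety in $\Gr(d,d+m)\times\P^1$ and pushing forward. For $\lambda=\vertdomino$, its determinant differs from your Thom--Porteous determinant only in that its first row uses $\Osc_m$ instead of $\Osc_{m+1}$. The resulting $dm\,h\,\sigma_1$ terms cancel, so both give $\sigma_{11}+(d-1)(m+1)h\,\sigma_1$ and the same pushforward.

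The one real divergence is the birationality step you flagged. The paper needs no local computation there. Injectivity on $\overline{k}$-points makes $k(\Zdomino)\subset k(I)$ purely inseparable of degree $p^r$. Then $p^r[\Zdomino]=(d-1)(m+1)[X_\square]$ in $\Pic(\Gr)\cong\Z[X_\square]$ forces $p^r\mid(d-1)(m+1)$, which is impossible for $r\ge 1$ because $p\ge d+m$ exceeds both factors.

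Your tangent-space route also works. Take $V_0=\langle 1,z,\ldots,z^{d-3},z^{d-1},z^d\rangle$. The $(d-1)\times(d-1)$ minor of $\bigl(D^j f(t)\bigr)_{j\le d-2}$ omitting $z^d$ equals $c\,t$. Up to $\prod_j j!$, the constant $c$ is a Vandermonde product of differences of exponents, so it is nonzero in characteristic $0$ or $\ge d+m$. Hence $I\to\Gr$ is unramified at $(V_0,0)$. Since $V_0$ is a special point, you must still pass to the generic point of $I$, using that the unramified locus is open and $I$ is irreducible. This gives separability of $k(I)/k(\Zdomino)$, and with injectivity on geometric points, degree one.

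Your second alternative is not sufficient as phrased. Reducedness of $\Wr^{-1}(\Delta_{dm})$ together with injectivity on points does not by itself exclude inseparability. It does become a proof once combined with your consistency check. Write $I_\horizdomino$ for the analogous incidence variety, and $e_\vertdomino$, $e_\horizdomino$ for the degrees of $I\to\Zdomino$ and $I_\horizdomino\to Z_\horizdomino$. Reducedness gives $[\Zdomino]+[Z_\horizdomino]=(2dm-2)[X_\square]$. Your two pushforwards give $e_\vertdomino[\Zdomino]+e_\horizdomino[Z_\horizdomino]=(2dm-2)[X_\square]$. Both classes are nonnegative multiples of $[X_\square]$, $[\Zdomino]\neq 0$, and $e_\vertdomino,e_\horizdomino\ge 1$, so $e_\vertdomino=1$.
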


This statement will follow by taking $\lambda = \medvertdomino$ in the following more general calculation:

\begin{proposition} \label{prop:kempf-laksov-degree-calculation}
Let $\mathcal{X}_\lambda \subset \Gr(d, d+m) \times \mathbb{P}^1$ be the incidence correspondence
\[\mathcal{X}_\lambda = \{(V, t) : V \in X_\lambda \Osc_\bullet(t)\},\]
i.e. the universal osculating flag Schubert variety of type $\lambda$. Then 
\begin{equation} \label{eqn:universal-osculating-schubert-variety-class}
[\mathcal{X}_\lambda] = \pi_1^*[X_\lambda] + \sum_{\lambda' \subset \lambda} (d-i+\lambda_i)(m+i-\lambda_i) \cdot \pi_1^*[X_{\lambda'}] \cdot \pi_2^*[\mathrm{pt}],
\end{equation}
where the sum runs over all $\lambda' \subset \lambda$ obtained by deleting an outer corner box of $\lambda$, and $i=i(\lambda', \lambda)$ denotes the row of the deleted box. If we furthermore assume that $|\lambda|\ge 2$, and that the characteristic of $k$ is 0 or at least $d+m$, then we obtain
\begin{equation} \label{eqn:z-lambda-class}
[Z_\lambda] = \sum_{\lambda' \subset \lambda} (d-i+\lambda_i)(m+i-\lambda_i) [X_{\lambda'}]
\end{equation}
by pushing down to $\Gr_k(d,d+m)$.
\end{proposition}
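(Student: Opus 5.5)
The plan is to compute $[\mathcal{X}_\lambda]$ in $A^*(\Gr(d,d+m)\times\P^1)$ with the Kempf--Laksov determinantal formula applied to the flag of subbundles $\Osc_\bullet$ of the trivial bundle $W\otimes\mathcal{O}$, where $W=k_{d+m-1}[t]$. Then \eqref{eqn:z-lambda-class} should follow by pushing forward along $\pi_1$.

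Write $h=\pi_2^*[\mathrm{pt}]$, so $h^2=0$, and let $S\subset W\otimes\mathcal{O}$ be the tautological subbundle with quotient $Q$, so that $c(Q)=c(-S)$. Set $r_i=m+i-\lambda_i$.
\begin{enumerate}
\item \emph{Degeneracy loci.} The condition $V\in X_\lambda\Osc_\bullet(t)$ says $\dim(V\cap\Osc_{r_i}(t))\ge i$ for each $i$. Equivalently, the composite $S\to W\otimes\mathcal{O}\to (W\otimes\mathcal{O})/\Osc_{r_i}$ has rank $\le d-i$.
\item \emph{Codimension.} $\mathcal{X}_\lambda$ is fibered over $\P^1$ with fibers the Schubert varieties $X_\lambda\Osc_\bullet(t)$. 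So it has the expected codimension $|\lambda|$, and it is reduced and Cohen--Macaulay. Kempf--Laksov then gives
\[
[\mathcal{X}_\lambda]=\det\big(c_{\lambda_i+j-i}(Q-\Osc_{r_i})\big)_{i,j}.
\]
\item \emph{Chern classes of the flag.} By \eqref{eqn:osculating-flag-as-bundle} and $h^2=0$,
\[
c(\Osc_r)^{-1}=(1-(d+m-r)h)^{-r}=1+r(d+m-r)h.
\]
Hence the $(i,j)$ entry of the matrix is $c_{\lambda_i+j-i}(Q)+a_i h\,c_{\lambda_i+j-i-1}(Q)$ with
\[
a_i=r_i(d+m-r_i)=(m+i-\lambda_i)(d-i+\lambda_i).
\]
\item \emph{Expanding in $h$.} Expand the determinant by multilinearity in the rows, using $h^2=0$.
\begin{itemize}
\item The $h^0$ term is the Jacobi--Trudi determinant, equal to $\pi_1^*[X_\lambda]$.
\item The $h^1$ term is $\sum_i a_i h$ times the Jacobi--Trudi determinant with $\lambda_i$ replaced by $\lambda_i-1$.
\end{itemize}
That modified determinant equals $[X_{\lambda'}]$ when $\lambda'=\lambda-e_i$ is a partition, i.e. when box $(i,\lambda_i)$ is an outer corner. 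Otherwise $\lambda_i-1<\lambda_{i+1}$, so $\lambda_i-1-i=\lambda_{i+1}-(i+1)$; rows $i$ and $i+1$ then coincide and the determinant vanishes. This yields \eqref{eqn:universal-osculating-schubert-variety-class}.
\end{enumerate}
Before trusting this, I would double-check the sign and indexing conventions: whether the formula uses $c(Q-\Osc_r)$ or $c(\Osc_r^\vee\ldots)$, and whether the correction lands on the correct row.

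For \eqref{eqn:z-lambda-class}, push forward along $\pi_1$. We have $\pi_{1*}\pi_1^*\alpha=0$ and $\pi_{1*}(\pi_1^*\alpha\cdot h)=\alpha$. Since $Z_\lambda=\pi_1(\mathcal{X}_\lambda)$, it remains to show that $\pi_1\colon\mathcal{X}_\lambda\to Z_\lambda$ is birational, so that $\pi_{1*}[\mathcal{X}_\lambda]=[Z_\lambda]$. This is where $|\lambda|\ge 2$ and the characteristic assumption enter, and it is the step I expect to be the main obstacle.

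\emph{Generic injectivity.} By \Cref{prop:wronski-root-schubert}, if $V$ lies in $X_\lambda\Osc_\bullet(t)$ and in $X_\lambda\Osc_\bullet(t')$ with $t\ne t'$, then $\Wr(V)$ has roots of order $\ge 2$ at both $t$ and $t'$. By \Cref{prop:wronski-root-schubert} again, such $V$ lie in $X_\mu\Osc_\bullet(t)\cap X_{\mu'}\Osc_\bullet(t')$ with $|\mu|,|\mu'|\ge 2$. A dimension count, using that the Wronski map is finite, shows that this locus, swept over $(t,t')$, has codimension at least $2|\lambda|-2>|\lambda|-1$ in $\Gr(d,d+m)$. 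It therefore cannot fill $Z_\lambda$.

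\emph{Degree one.} In positive characteristic, injectivity alone does not rule out a purely inseparable map. To exclude this, I would construct an explicit rational inverse $Z_\lambda\dashrightarrow\mathcal{X}_\lambda$, $V\mapsto(V,t(V))$. Here $t(V)$ is the unique repeated root of $\Wr(V)$. For a polynomial with exactly one double root, $\gcd(w,w')$ is linear, so this root is a rational function of the coefficients of $w$. This exhibits $\pi_1$ as birational, and completes the proof.
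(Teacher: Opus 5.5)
Your computation of $[\mathcal{X}_\lambda]$ matches the paper's argument: the degeneracy-locus description, Kempf--Laksov, $c(\Osc_r)^{-1}=1+r(d+m-r)h$, and the row-multilinear expansion in which non-corner rows give repeated rows. Your generic-injectivity step for $\pi_1\colon\mathcal{X}_\lambda\to Z_\lambda$ is also essentially the paper's, with one fix. You need that $V\in X_\lambda\Osc_\bullet(t)$ forces a root of order $\ge|\lambda|$ at $t$, not merely $\ge 2$; only that stronger bound gives codimension $2|\lambda|-2$.

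The gap is the ``degree one'' step, which is exactly where positive characteristic matters. The generic point of $Z_\lambda$ has Wronskian $w=(z-t)^{|\lambda|}u(z)$ with $u$ squarefree and $u(t)\neq 0$. So for $|\lambda|\ge 3$ there is no double root, and $\gcd(w,w')$ is $(z-t)^{|\lambda|-1}$ or $(z-t)^{|\lambda|}$, not linear. More seriously, the hypotheses allow $p\mid|\lambda|$. Examples are $d=2$, $m=3$, $p=5$, $\lambda=(3,2)$, or the paper's own $\lambda=(3,3,1)$ with $d=m=3$, $p=7$. In that case $(z-t)^{|\lambda|}$ is a polynomial in $z^p$ and $t^p$. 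Hence the coefficients of $w$ lie in $k(t^p,u_0,u_1,\ldots)$, and $t$ is \emph{not} a rational function of the coefficients of $\Wr(V)$. Your proposed inverse $V\mapsto (V,t(V))$ therefore cannot be built through the Wronskian. In characteristic $0$ you never needed it, since generic injectivity already gives birationality there.

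The fix, which is the paper's argument, reuses the class formula you already proved. Generic injectivity on $\bar k$-points makes $k(\mathcal{X}_\lambda)/k(Z_\lambda)$ purely inseparable of some degree $p^r$. Hence
\[
\pi_{1*}[\mathcal{X}_\lambda]=p^r[Z_\lambda]=\sum_{\lambda'\subset\lambda} a_i\,[X_{\lambda'}].
\]
Since Schubert classes form a $\mathbb{Z}$-basis of the Chow ring, $p^r$ divides each $a_i=(d-i+\lambda_i)(m+i-\lambda_i)$ for every outer corner. For a corner we have $1\le\lambda_i\le m$ and $1\le i\le d$, so both factors lie in $[1,d+m-1]$ and are prime to $p\ge d+m$. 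Therefore $r=0$.
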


\begin{proof}
Let 
\[0 \to \mathcal{S} \to k_{d+m-1}[z] \otimes \mathcal{O}_\Gr \xrightarrow{\varphi} \mathcal{Q} \to 0\]
denote the short exact sequence of tautological bundles on $\Gr = \Gr(d, d+m)$. On $\Gr(d, d+m) \times \mathbb{P}^1$, we view $k_{d+m-1}[z] \otimes \mathcal{O}_{\Gr \times \P^1}$ as having a complete flag of subbundles given by the osculating flags,
\[\pi_2^* \Osc_1 \subset \cdots \subset \pi_2^* \Osc_{d+m} = k_{d+m-1}[z] \otimes \mathcal{O}_{\Gr \times \P^1}.\]
We consider the pullback of $\varphi$ along $\pi_1$, restricted to each of these subbundles. The rank of
\[\pi_2^* \Osc_{m+i-\lambda_i} \to \pi_1^* \mathcal{Q}\]
at a point $(V, t)$ is $m+i-\lambda_i - \dim(V \cap \Osc_{m+i-\lambda_i})$, so $\mathcal{X}_\lambda$ is the degeneracy locus where, for each $i$, this rank is at most $m - \lambda_i$. Since $\mathcal{X}_\lambda$ has the expected codimension $|\lambda|$, we may compute its class $[\mathcal{X}_\lambda]$ using the Kempf--Laksov formula. Specifically, it is given by the determinant of a $d \times d$ matrix in the Chern classes of the virtual bundles $\pi_1^* \mathcal{Q} - \pi_2^* \Osc_{m+i-\lambda_i}$, namely
\begin{equation} \label{eqn:kempf-laksov}
[\mathcal{X}_\lambda] = \det\big(c_{\lambda_i - i + j}(\pi_1^* \mathcal{Q} - \pi_2^* \Osc_{m+i-\lambda_i})\big)_{1 \leq i,j \leq d}.\end{equation}
To calculate this, we note that
\begin{align*}
c(\pi_1^* \mathcal{Q}) &= 1 + \pi_1^* [X_\square] + \pi_1^* [X_\horizdomino] + \pi_1^* [X_\smallydiagram{3}] + \cdots, \\
&= 1 + \medsquare + \medhorizdomino + \medydiagram{3} + \cdots,
    && \text{writing $\lambda$ for $\pi_1^*[X_\lambda]$,} \\
c(\pi_2^* \Osc_{m+i-\lambda_i}) &= \pi_2^* c\big(\mathcal{O}_{\P^1}(-(d-i+\lambda_i))^{\oplus m+i-\lambda_i} \big)
    &&\text{by Equation \eqref{eqn:osculating-flag-as-bundle},} \\
&= (1-(d-i+\lambda_i)\pi_2^*[\mathrm{pt}])^{m+i-\lambda_i} \\
&= 1 - \underbrace{(d-i+\lambda_i)(m+i-\lambda_i)}_{=: \, a_i} \pi_2^*[\mathrm{pt}],
    && \text{since $[\mathrm{pt}]^2 = 0$ on $\P^1$,} \\
&= 1 - a_i \epsilon,
\end{align*}
writing $\epsilon = \pi_2^*[\mathrm{pt}]$. We may then compute the virtual classes, using $\epsilon^2 = 0$ again:
\begin{align*}
c(\pi_1^* \mathcal{Q} - \pi_2^* \Osc_{m+i-\lambda_i}) &:= 
\frac{c(\pi_1^*\mathcal{Q})}{c(\pi_2^* \Osc_{m+i-\lambda_i})} \\
&= \frac{1 + \medsquare + \medhorizdomino + \medydiagram{3} + \cdots}{1 - a_i \epsilon } \\
&= \big(1 + \medsquare + \medhorizdomino + \medydiagram{3} + \cdots \big)\big(1 + a_i  \epsilon \big) \\
&= 1 + \big(\medsquare + a_i \epsilon\big) +
\big(\medhorizdomino + a_i \, \medsquare \, \epsilon\big) +
\big(\medydiagram{3} + a_i \, \medhorizdomino \, \epsilon\big) + \cdots,
\end{align*}
and we see that the entries of the Kempf--Laksov matrix \eqref{eqn:kempf-laksov} are
\begin{equation}
c_{\lambda_i - i + j}(\pi_1^* \mathcal{Q} - \pi_2^* \Osc_{m+i-\lambda_i}) = 
\underbrace{\medydiagram{7}}_{\lambda_i - i + j} \ +\ a_i \underbrace{\medydiagram{6}}_{\lambda_i - i + j - 1} \epsilon
\end{equation}
or zero if $\lambda_i - i + j < 0$.

Expanding out the determinant \eqref{eqn:kempf-laksov}, the term with no $\epsilon$ factors is just the usual Jacobi--Trudi determinant for the class $\lambda$. Since $\epsilon^2 = 0$, the only higher term is the one with a single factor of $\epsilon$. By multilinearity, the $\epsilon$ term is the sum of $d$ determinants, where the $i$-th one is obtained by always taking the $\epsilon$ term in the $i$-th row and the terms without $\epsilon$ in each other row. If $\lambda_i = \lambda_{i+1}$, the resulting matrix has a repeated row in rows $i$ and $i+1$, so its determinant is $0$. If $\lambda_i > \lambda_{i+1}$, it is instead the usual Jacobi--Trudi matrix (times the constant $a_i$) for the partition $\lambda' \subset \lambda$ obtained by decrementing $\lambda_i$. This gives the desired formula.

Finally, to obtain Equation \eqref{eqn:z-lambda-class}, we apply $(\pi_1)_*$ to Equation \eqref{eqn:universal-osculating-schubert-variety-class}, and we obtain
\begin{equation}\label{eqn:degXlambda-over-Zlambda}
    \deg(\mathcal{X}_\lambda/Z_\lambda)[\mathcal{X}_\lambda] = \sum_{\lambda' \subset \lambda} (d-i+\lambda_i)(m+i-\lambda_i) [X_{\lambda'}],
\end{equation}
where the right hand side follows by the projection formula. Thus the proposition reduces to arguing that $\mathcal{X}_\lambda \to Z_\lambda$ has degree one.

We can write a generic point $V\in Z_\lambda$ as $(z-t)^{|\lambda|}u(z)$ where $u(z)$ is a squarefree polynomial and is nonvanishing at $t$. Since $|\lambda|\ge 2$, we have that $t$ is the unique value in $\P^1$ for which $V \in X_\lambda(t)$. This tells us that the map $\mathcal{X}_\lambda \to Z_\lambda$ is generically injective on $\overline{k}$-points (since both $\mathcal{X}_\lambda$ and $Z_\lambda$ are geometrically irreducible, we can compute the degree after base change to the algebraic closure). Therefore since the size of the geometric fiber is one, this is equal to the separable degree, allowing us to conclude that $k(Z_\lambda) \subseteq k(\mathcal{X}_\lambda)$ is purely inseparable, of some degree $\deg(\mathcal{X}_\lambda/Z_\lambda) = p^r$.

By \Cref{eqn:degXlambda-over-Zlambda}, we conclude that $p^r$ divides $(d-i+\lambda_i)(m+i-\lambda_i)$ for each $\lambda' \subseteq \lambda$. Since $1\le \lambda_i \le m$ and $1\le i\le d$, the largest that either factor can be is $d+m-1$. Thus by our assumptions on $p$, we must have that $r=0$.
\end{proof}

\begin{example}
Let $\lambda = (3, 3, 1) = \medydiagram{3,3,1}$ and write $\epsilon = \pi_2^*[\mathrm{pt}]$. Let $a_i = (d-i+\lambda_i)(m+i-\lambda_i)$ be as above. The Kempf--Laksov matrix is
\begin{align*}
&\begin{bmatrix}
\medydiagram{3} + a_1 \epsilon \cdot \medydiagram{2} &
\medydiagram{4} + a_1 \epsilon \cdot \medydiagram{3} &
\medydiagram{5} + a_1 \epsilon \cdot \medydiagram{4} \\
\medydiagram{2} + a_2 \epsilon \cdot \medydiagram{2} &
\medydiagram{3} + a_2 \epsilon \cdot \medydiagram{3} &
\medydiagram{4} + a_2 \epsilon \cdot \medydiagram{4} \\
0 + a_3 \epsilon \cdot 0 &
1 + a_3 \epsilon \cdot 0 &
\medydiagram{1} + a_3 \epsilon \cdot 1
\end{bmatrix}
\\
&=
\begin{bmatrix}
\medydiagram{3} &
\medydiagram{4} &
\medydiagram{5} \\
\medydiagram{2} &
\medydiagram{3} &
\medydiagram{4} \\
0 &
1 &
\medydiagram{1}
\end{bmatrix}
+ \epsilon
\begin{bmatrix}
a_1  \cdot \medydiagram{2} &
a_1  \cdot \medydiagram{3} &
a_1  \cdot \medydiagram{4} \\
a_2  \cdot \medydiagram{2} &
a_2  \cdot \medydiagram{3} &
a_2  \cdot \medydiagram{4} \\
0 &
0 &
a_3 \cdot 1
\end{bmatrix} = A + \epsilon B
\end{align*}
The left matrix has $\det(A) = \lambda$ by the Jacobi--Trudi formula and gives the term with no $\epsilon$, while the $\epsilon$ terms are obtained by swapping one row of $B$ into $A$ each of the three possible ways, giving
\[
a_1 \det \begin{bmatrix}
\medydiagram{2} &
\medydiagram{3} &
\medydiagram{4} \\
\medydiagram{2} &
\medydiagram{3} &
\medydiagram{4} \\
0 &
1 &
\medydiagram{1}
\end{bmatrix}
+
a_2 \det
\begin{bmatrix}
\medydiagram{3} &
\medydiagram{4} &
\medydiagram{5} \\
\medydiagram{1} &
\medydiagram{2} &
\medydiagram{3} \\
0 &
1 &
\medydiagram{1}
\end{bmatrix}
+
a_3 \det
\begin{bmatrix}
\medydiagram{3} &
\medydiagram{4} &
\medydiagram{5} \\
\medydiagram{2} &
\medydiagram{3} &
\medydiagram{4} \\
0 &
0 &
1
\end{bmatrix}.
\]
The first matrix has a repeated row, so its determinant is $0$. The second and third matrices give, respectively, $\lambda' = \medydiagram{3,2,1}$ and $\lambda' = \medydiagram{3,3}$. Thus
\[
[\mathcal{X}_\smallydiagram{3,3,1}] = \medydiagram{3,3,1} + a_2 \epsilon \, \medydiagram{3,2,1} + a_3 \epsilon \, \medydiagram{3,3} \quad \text{ and } [Z_\smallydiagram{3,3,1}] = a_2\, \medydiagram{3,2,1} + a_3\, \medydiagram{3,3}.
\]
\end{example}

\begin{example} The hook and box varieties in $\Gr(d,d+m)$ have classes
\begin{align*}
    [Z_\smallydiagram{2,1}] &= (d+1)(m-1)[X_\vertdomino] + (d-1)(m+1)[X_\horizdomino] \\
    [Z_\smallydiagram{2,2}] &= dm[X_\smallydiagram{2,1}].
\end{align*}
\end{example}

Finally, we discuss orientations of the Wronski map. The key lemma of this section is:

\begin{lemma}
\label{lem:wronski-relatively-oriented-relative-to-vertdomino}
The Wronski map is not in general oriented, but is $Z_\vertdomino$-oriented.
\end{lemma}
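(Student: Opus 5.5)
Since $\Pic(\Gr_k(d,d+m)) \cong \Z$ is generated by $[X_\square]$, the whole question reduces to computing the class of $\omega_{\Wr}$ and checking parities. For a finite flat lci map between smooth varieties, the relative dualizing sheaf is $\omega_{\Wr} \cong \omega_{\Gr} \otimes \Wr^*\omega_{\P^{dm}}^{-1}$. We have $\omega_{\Gr} \cong \mathcal{O}(-(d+m))$ in terms of $[X_\square]$, and $\omega_{\P^{dm}} \cong \mathcal{O}(-(dm+1))$. Moreover $\Wr^*\mathcal{O}(1) = [X_\square]$, as used in the proof that $\Wr$ is finite. Hence
\[
c_1(\omega_{\Wr}) = \big(dm+1-d-m\big)[X_\square] = (d-1)(m-1)[X_\square].
\]

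First consider orientability. An orientation exists exactly when this class is divisible by $2$ in $\Pic$, which happens if and only if $(d-1)(m-1)$ is even, i.e.\ if and only if $d$ or $m$ is odd. For $d=m=2$, for instance, $\omega_{\Wr} = \mathcal{O}(1)$ is not a square, so $\Wr$ is not oriented in general. Here I use that $\Pic(\Gr)$ is torsion-free $\Z$, so squares of line bundles are exactly the even multiples.

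Next, for the $\Zdomino$-orientation, \Cref{prop:degree-domino-variety} gives $\mathcal{O}(\Zdomino) \cong \mathcal{O}((d-1)(m+1))$. Therefore
\[
c_1\big(\omega_{\Wr}(-\Zdomino)\big) = (d-1)\big((m-1)-(m+1)\big)[X_\square] = -2(d-1)[X_\square],
\]
which is even. Setting $\mathcal{L} = \mathcal{O}(-(d-1))$ and choosing any isomorphism $\rho\colon \mathcal{L}^{\otimes 2} \xto{\sim} \omega_{\Wr}\otimes\mathcal{O}(-\Zdomino)$ then gives the desired $\Zdomino$-orientation in the sense of \Cref{def:orientation-relative-to-divisor}.

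It remains to check that $\Zdomino$ is a Cartier divisor, so that $\mathcal{O}(\Zdomino)$ makes sense. It is an irreducible closed subvariety of codimension $|\vertdomino|-1 = 1$ in the smooth variety $\Gr$, hence Cartier by local factoriality.

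The main point needing care is the identification $\omega_{\Wr} \cong \omega_{\Gr}\otimes \Wr^*\omega_{\P^{dm}}^\vee$. The determinant of the cotangent complex of $\Wr$, which is the one-term-shifted complex $[\Wr^*\Omega_{\P}\to \Omega_{\Gr}]$, is exactly this line bundle. The identification itself is standard; the substantive input is really the divisor-class computation of \Cref{prop:degree-domino-variety}, which requires the characteristic hypothesis.
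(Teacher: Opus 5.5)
Your proof is correct and follows the paper's argument. Both compute $\omega_{\Wr} \cong \mathcal{O}((d-1)(m-1))$ from the canonical bundles of $\Gr$ and $\P^{dm}$ together with $\Wr^*\mathcal{O}(1) = \mathcal{O}_{\Gr}(1)$, and then use \Cref{prop:degree-domino-variety} to fix the parity. The one cosmetic difference is that you compute $\omega_{\Wr}\otimes\mathcal{O}(-\Zdomino) \cong \mathcal{O}(-2(d-1))$ directly. This matches \Cref{def:orientation-relative-to-divisor} exactly and recovers the line bundle $\mathcal{O}(1-d)$ of \Cref{nota:rho-domino}. The paper instead computes $\omega_{\Wr}\otimes\mathcal{O}(\Zdomino)$, which differs from yours by the square $\mathcal{O}(2\Zdomino)$.
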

\begin{proof} Since $\Wr$ is a central projection out of Pl\"ucker space, it pulls back $\mathcal{O}_{\P^{dm}}(1)$ to $\mathcal{O}_\Gr(1)$. Since $\omega_{\P^{dm}} \cong \mathcal{O}_{\P^{dm}}(-dm-1)$ and $\omega_\Gr \cong \mathcal{O}_\Gr(-d-m)$, we have
\[
\omega_\Wr = \Wr^* \omega_{\P^{dm}}^\vee \otimes \omega_{\Gr} = \mathcal{O}_\Gr(dm+1) \otimes \mathcal{O}_\Gr(-d-m) = \mathcal{O}_\Gr((d-1)(m-1)).
\]
In particular, $\omega_\Wr$ is a square if and only if $d$ or $m$ is odd. On the other hand, by \Cref{prop:degree-domino-variety},
\[
\omega_{\Wr} \otimes \mathcal{O}(Z_\vertdomino) \cong \mathcal{O}(2(d-1)m),
\]
so the Wronski map always admits a $Z_\vertdomino$-orientation class.
\end{proof}

As a last result we will need from this section, we check that $\Zdomino$ is not contained in the ramification locus of the Wronski map. This will allow us to set up chambers for the Wronski map relative to $\Zdomino$ as in \Cref{def:relative-A1-chamber}.

\begin{proposition}\label{prop:Zdomino-not-contained-in-ramification-locus}  Let $R \subseteq \Gr(d, d+m)$ be the ramification locus of $\Wr$. Then $\Zdomino \not\subseteq R$.
\end{proposition}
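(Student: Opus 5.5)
It suffices to exhibit a single point $V \in \Zdomino$ at which $\Wr$ is \'etale. By \Cref{prop:preimage-of-discriminant-locus}, $\Wr^{-1}(\Delta_{dm})$ is the \emph{reduced} divisor $\Zdomino \cup Z_\horizdomino$. So I would argue through multiplicities: ramification along $\Zdomino$ would force a non-reduced preimage somewhere. Concretely, I pick a generic point $w \in \Delta_{dm}$, namely a polynomial $w(t) = (t-t_0)^2 u(t)$ with $u$ squarefree and $u(t_0)\neq 0$. Near $w$ the discriminant $\Delta_{dm}$ is a smooth hypersurface, and a transverse slice through $w$ is a smooth curve $C \subset \P^{dm}$ meeting $\Delta_{dm}$ transversally at $w$. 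One can take, for instance, the pencil $(t-t_0)^2u(t) + \lambda u(t)$, which near $\lambda=0$ has two simple roots near $t_0$ colliding to first order in $\sqrt{\lambda}$.

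The key step is to compare the local structure of $\Wr$ over $w$ with the cycle computation of \Cref{prop:wronski-root-schubert}. Over $\overline{k}$, write the fiber $\Wr^{-1}(w)$ as $\Wr^{-1}(w)\cap \Zdomino$ together with $\Wr^{-1}(w)\cap Z_\horizdomino$. The fiber over $w$ has total length $\#\SYT(d\times m)$. I would use \Cref{prop:wronski-root-schubert} applied to the root $t_0$ of order $2$, together with the simple roots of $u$, to identify the scheme-theoretic fiber. That fiber should be the intersection $X_\vertdomino\Osc_\bullet(t_0) \cap \bigcap_{u(t_i)=0} X_\square\Osc_\bullet(t_i)$, and likewise with $\horizdomino$ in place of $\vertdomino$.

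For generic $w$ I would argue these intersections are transverse, hence reduced. Then the fiber has $\#\SYT$ of $\vertdomino$-type points plus $\#\SYT$ of $\horizdomino$-type points, all reduced as points of the intersection. A point $x$ of the first kind has local multiplicity $2$ for $\Wr$ restricted to the slice $\Wr^{-1}(C)$, since $\Zdomino$ has multiplicity one in $\Wr^{*}\Delta_{dm}$ and $\Delta_{dm}$ is cut out on $C$ to first order. Hence $\Wr^{-1}(C)\to C$ is a simple double cover branched at $x$, and $x$ lies in the ramification locus. That is the wrong direction, so this line of argument needs correcting.

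Indeed, since $\Wr^*\Delta_{dm}$ is reduced while $\Delta_{dm}$ is the branch divisor, the relation $\Wr^*(\text{branch divisor}) = \text{ramification} + \ldots$ makes clear that one of the two domino varieties is the ramification and the other is not. The honest plan is therefore a local computation at a generic point of $\Zdomino$. I take $V$ with vanishing sequence $(0,1,\dots,d-3,d-1,d)$ at $t_0=0$, which is the $\vertdomino$ type, and general otherwise. I then compute the differential of $\Wr$ in the standard affine chart of $\Gr$ around $V$, or equivalently show that the tangent map has full rank by checking that the map from tangent vectors to the $2$-jet of $\Wr$ at $0$ is surjective onto the constant and linear coefficients. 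The expected obstacle is exactly this Jacobian computation, which also has to distinguish $\vertdomino$ from $\horizdomino$. I would first try $d=2$, where the Wronskian $f_1f_2'-f_1'f_2$ is explicit, and then reduce general $d$ to it. For that reduction I would perturb only the two basis vectors realizing the domino, $f_{d-1}=t^{d-1}+\dots$ and $f_d=t^d+\dots$, while fixing $1,\dots,t^{d-3}$. This makes the Wronskian factor as a $2\times2$ Wronskian, and the characteristic hypothesis $p\ge d+m$ keeps the binomial factors nonzero. Surjectivity of the differential at one point then shows $\Zdomino\not\subseteq R$.
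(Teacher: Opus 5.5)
Your first paragraph is already the paper's proof. By \Cref{prop:preimage-of-discriminant-locus}, $\Wr^*\Delta_{dm}=\Zdomino+Z_\horizdomino$ is reduced. The multiplicity of $\Zdomino$ in $\Wr^*\Delta_{dm}$ is the ramification index $e$ of $\Wr$ along $\Zdomino$, so $e=1$ and $\Wr$ is unramified at the generic point of $\Zdomino$ (in characteristic $0$; in general one also needs $k(\Zdomino)/k(\Delta_{dm})$ separable).

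The gap is that you then retract this because of a false step. A point $x\in\Zdomino$ over a general $w\in\Delta_{dm}$ does not have local multiplicity $2$ on $\Wr^{-1}(C)$. The $\sqrt{\lambda}$ branching of your pencil is a feature of the root map $\Sym$, not of $\Wr$. Your own count refutes it: $\Wr^{-1}(w)$ has length $\#\SYT(d\times m)$, which is exactly the number of $\vertdomino$-type plus $\horizdomino$-type points. So if these points are distinct, each has length $1$ and $\Wr$ is \'etale over $w$. Likewise $\Delta_{dm}$ is not the branch divisor of $\Wr$, and ``one domino variety is the ramification'' contradicts the very reducedness you invoked, since a ramified component would occur with multiplicity $e\ge 2$. (For $\Gr(2,4)$ the fibers in the proof of \Cref{prop:Gr24} are defined over $k(\sqrt{b^2-3ac+12d})$, so the branch divisor is that quadric, not the sextic $\Delta_4$.)

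The replacement plan does not prove the statement, for three reasons.

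First, full rank of $d\Wr_V$, a $dm\times dm$ condition, is not equivalent to surjectivity onto the constant and linear coefficients of $\Wr$ at $0$. The latter only says that $\Wr^{-1}(W'(0))$ is smooth of codimension $2$ at $V$. That is multiplicity-one information which \Cref{prop:wronski-root-schubert} already gives at a general point, since $\#\SYT(\vertdomino)=1$. It says nothing about $X_\vertdomino\Osc_\bullet(0)\to W'(0)$ being \'etale at $V$.

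Second, perturbing only $f_{d-1},f_d$ with $1,\dots,t^{d-3}$ fixed keeps $V$ in $\{V\supseteq k_{d-3}[t]\}\cong\Gr(2,m+2)$, so it sees only $2m$ of the $dm$ tangent directions. Worse, this locus is $X_{(m^{d-2})}\Osc_\bullet(\infty)$. By \Cref{prop:wronski-root-schubert} at $t_0=\infty$, it occurs with multiplicity $\#\SYT(m^{d-2})$ in the preimage of the linear space of polynomials of degree $\le 2m$. For $d\ge 4$ and $m\ge 2$ this multiplicity is at least $2$, so your base point lies in $R$: \'etaleness at $V$ would make that preimage smooth at $V$.

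Third, there is nothing to ``distinguish'' between the two dominoes. The same $2$-jet computation works for $\horizdomino$, e.g.\ at $V=\mathrm{span}(1,\dots,t^{d-2},f_d)$, where $\Wr(V)=D^{d-1}f_d$.

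The repair is simply to keep your first paragraph. Alternatively, a class count suffices. $R$ is the zero divisor of the Jacobian section of $\omega_\Wr\cong\mathcal{O}((d-1)(m-1))$ (\Cref{lem:wronski-relatively-oriented-relative-to-vertdomino}). $\Zdomino$ is a prime divisor of class $(d-1)(m+1)[X_\square]$ (\Cref{prop:degree-domino-variety}). So $\Zdomino\subseteq R$ would make $R-\Zdomino$ effective of class $-2(d-1)[X_\square]$, which is impossible.
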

\begin{proof} By \Cref{prop:preimage-of-discriminant-locus}, $\Wr^{-1}(\Delta_{dm}) = \Zdomino \cup Z_\horizdomino$, with the reduced structure. In particular, $\Wr$ is not ramified at a general point of $\Zdomino$ (nor for that matter at a general point of $Z_\horizdomino$).
\end{proof}

%% file: sections/twisted_wr.tex
\section{Twisted degrees of Wronski maps}

We now prove our main results regarding the enriched degrees of the Wronski map. We know that the Wronski map always admits a $\Zdomino$-orientation class $\rho_\vertdomino$ by \Cref{lem:wronski-relatively-oriented-relative-to-vertdomino}, hence if $\sdomino \in \mathcal{O}(\Zdomino)$ is an equation for it, we obtain a well-defined twisted degree $\deg^{\sdomino}(f,\rho_\vertdomino) \in \sheafGW(\P^{dm}\setminus\Delta)$ as in \Cref{def:twisted-degree}. The complement of the discriminant locus in projective space is $\mathbb{A}^1$-disconnected in general, so we can obtain many different degrees by pulling back to various points. This motivates us to try to find reasonable $\mathbb{A}^1$-chambers of the Wronski map relative to $\Zdomino$, which will yield fiber-invariant twisted degrees. Our idea will be to take the ``root space'' of Wronski polynomials, and give it a twisted rational structure corresponding to the Galois orbits of roots. The orbits constructed in this way cover the complement of the discriminant locus, and our construction works over all fields of characteristic $0$ or $\geq d+m$. Thus we obtain well-defined \emph{twisted degrees} of the Wronski map, which depend only on the Galois type of the roots of the Wronskian. Over $\R$, these twists generalize the character orientations used in \cite{LevinsonPurbhoo}, and the corresponding \emph{untwisted degrees} as in \Cref{rmk:twisting-degree-by-section-of-2D} are those computed in \cite{eremenkoDegreesRealWronski2002}. We end this section by computing the twisted degrees of the Wronskian on $\Gr(2, 4)$ explicitly, over all fields of characteristic $0$ or $\geq 5$, showing that it depends precisely on the square class of the discriminant.

With all this in mind, we fix some notation for this section:

\begin{notation}\label{nota:rho-domino}
Fix $\sdomino\in \mathcal{O}(\Zdomino)$ an equation for the domino variety $\Zdomino$ on the Grassmannian and $\rhodomino$ a $\Zdomino$-orientation for the Wronski map:
\[
\rhodomino \colon \mathcal{O}(1-d)^{\otimes 2} \xto{\sim} \omega_\Wr \otimes \mathcal{O}(-\Zdomino).
\]
When $d$ or $m$ is odd, $\mathcal{O}(-\Zdomino)$ is a square, so we can use $\rhodomino$ to obtain a relative orientation for the Wronski map as in \Cref{rmk:twisting-degree-by-section-of-2D} and thereby obtain a well-defined untwisted degree. When $d$ or $m$ is odd, we call the resulting orientation of the Wronski map $\rho$.
\end{notation}

First, we construct some suitable $\mathbb{A}^1$-chambers for the Wronski map.

\subsection{Chambers from twisted root spaces} Recall that there is a natural map
\[
\Sym \colon (\P^1)^{dm} \to \P^{dm},
\]
given in affine coordinates by sending a tuple $(t_1,\ldots,t_{dm})$ to the polynomial $\prod_{i=1}^{dm}(t-t_i)$. We will refer to $(\P^1)^{dm}$ as the \emph{root space}, being the space of (ordered) roots of Wronski polynomials. We will argue that this yields an $\mathbb{A}^1$-chamber of the Wronski map relative to $\Zdomino$. We note, however, that the $k$-rational points of the chamber correspond to Wronski polynomials with all roots defined over the base field. We will want to capture degrees over Wronski polynomials whose roots are not all $k$-rational, and we will accomplish this by twisting the rational structure on the root space.

\begin{notation} For the remainder of this subsection, fix $k$ to be a field, let $k^s$ be a fixed separable closure, and take $G_s := \Gal(k^s/k)$ to be its absolute Galois group. For ease of notation, write $n=dm$ to be the dimension of the root space.
\end{notation}

Recall that for any variety $X$ defined over a field $k$, a \emph{twist} of $X$ is another $k$-variety $X'$ so that $X$ and $X'$ become isomorphic after base change to the field closure $\overline{k}$. The possible twists of $X$ are classified by the Galois cohomology group $H^1(\Gal(\overline{k}/k), \Aut(X_{\overline{k}}))$. We are interested in the particular case where $X = (\P^1_k)^n$, and where the automorphisms of $X_{\overline{k}}$ are taken to be permutations of the various projective lines in the cartesian product.

\begin{terminology} Let $\Sigma_n \le \Aut((\P^1_{\overline{k}})^n)$ be the subgroup of permutations of the copies of the projective line. For the purposes of this paper, we refer to a \emph{twisted rational structure} on $(\P^1_k)^n$ as any twist of $(\P^1_k)^n$ determined by a class in $H^1(\Gal(\overline{k}/k),\Sigma_n)$.
\end{terminology}

More explicitly, we have the following:

\begin{definition}[Twisted root space]
\label{def:sigma-action}
Let $\sigma \colon G_s \to \Sigma_n$ be any continuous group homomorphism from the absolute Galois group of $k$ to the symmetric group on $n$ elements. We define a twisted action of $G_s$ on $(\P^1_{k^s})^n$ by
\begin{align*}
    G_s \times (\P^1_{k^s})^n &\to (\P^1_{k^s})^n \\
    (g,(t_1,\ldots,t_n)) &\mapsto \sigma(g)(gt_1,\ldots,gt_n).
\end{align*}
We define $(\P^1_k)^n_\sigma$ to be the $G_s$-fixed locus of this action, which we call a \emph{twisted root space}. 

Concretely, $G_s$ acts by its Galois action and by permuting the roots according to $\sigma$. A $k$-rational point of the twisted root space (a fixed point of the action) is therefore an $n$-tuple of roots that form Galois orbits according to $\sigma$.
\end{definition}

\begin{remark} Such a $\sigma$ gives a well-defined finite \'etale $k$-algebra $A/k$, and $(\P^1_k)^n_\sigma$ can be alternatively described as the Weil restriction $\Res_{A/k}\P^1_A$ (c.f.~\cite[\S2.6]{KLSW-count}).
\end{remark}

\begin{example} If $\sigma$ is the trivial homomorphism, then $(\P^1_k)^n_\sigma = (\P^1_k)^n$ as $k$-varieties.
\end{example}

\begin{example} \label{ex:twisted-real-p1-2} When $n=2$ and $k=\mathbb{R}$, there is only a single nontrivial case to consider, namely where $\sigma : \Gal(\mathbb{C}/\mathbb{R}) \to S_2$ takes complex conjugation to the transposition swapping $1$ and $2$. A tuple $(z_1,z_2)$ is fixed by this if and only if
\[
(z_1,z_2) = (\overline{z_2},\overline{z_1}).
\]
Thus the real points of $(\P^1_{\mathbb{R}})^2_\sigma$ are the tuples of the form $(z,\overline{z})$, where $z$ is any complex number. In particular, its real locus is homeomorphic to a sphere $\mathbb{CP}^1 \cong S^2$, rather than a torus $\mathbb{RP}^1 \times \mathbb{RP}^1$.
\end{example}

\begin{example} Over a finite field $\mathbb{F}_p$, the absolute Galois group is the profinite integers. Any morphism of the form
\begin{align*}
    \sigma \colon \widehat{\mathbb{Z}} \cong \Gal(\overline{\mathbb{F}_p}/\mathbb{F}_p) \to \Sigma_n
\end{align*}
is determined by a single permutation, since $\sigma$ necessarily factors through a finite cyclic group. Moreover, if $\sigma_1$ and $\sigma_2$ are two maps $\widehat{\mathbb{Z}} \to \Sigma_n$ which send the profinite generator to two elements in the same conjugacy class of $\Sigma_n$, then it is clear that $(\mathbb{P}^1)^n_{\sigma_1}$ and $(\mathbb{P}^1)^n_{\sigma_2}$ are isomorphic as $\mathbb{F}_p$-varieties. The point of this observation is that we can encode the different possible twisted rational structures on $(\mathbb{P}_{\mathbb{F}_p}^1)^n$ concisely in terms of cycle decompositions of permutations, i.e., partitions of $n$. For instance if $n=4$, we obtain 5 different twisted rational structures, given by
\begin{align*}
    (\mathbb{P}_{\mathbb{F}_p}^1)^4= (\mathbb{P}_{\mathbb{F}_p}^1)^4_{1^4},\ (\mathbb{P}_{\mathbb{F}_p}^1)^4_{21^2},\ (\mathbb{P}_{\mathbb{F}_p}^1)^4_{2^2},\
    (\mathbb{P}_{\mathbb{F}_p}^1)^4_{31},\
    (\mathbb{P}_{\mathbb{F}_p}^1)^4_{4}.
\end{align*}
\end{example}

\begin{example} \label{ex:twisted-real-p1-n} Similarly to the pro-cyclic case over $\mathbb{F}_p$, over the reals, every twisted rational structure on $(\mathbb{P}^1_{\mathbb{R}})^n$ is, up to reordering, of the form $(\mathbb{P}^1_{\mathbb{R}})^n_{1^r 2^s}$ where $r+2s = n$. That is, the action of the absolute Galois group occurs by complex conjugation, followed by $s$ disjoint transpositions. Its real locus is $(\R\P^1)^r \times (\C\P^1)^s$.
\end{example}

\begin{example}[The coordinate ring of $(\A^1_k)^n_\sigma$]\label{ex:twisted-coordinate-ring}
We briefly describe the coordinate ring of the affine part of the twisted root space $(\A^1_k)^n_\sigma$, i.e. the open set where $t_i \ne \infty$ for all $i$. It is $\Spec$ of the invariant subring $k^s[t_1, \ldots, t_n]^{G_s}$, where $G_s$ acts by the Galois action on coefficients and by permuting $t_1, \ldots, t_n$ via $\sigma$.

If the action of $G_s$ on $\{1, \ldots, n\}$ by $\sigma$ is not transitive, $(\A^1_k)^n_\sigma$ decomposes as a product of $k$-varieties; thus we may reduce to the case where $\sigma$ is transitive. Let $H \leq G_s$ be the stabilizer of $1 \in \{1, \ldots, n\}$. Let $L$ be the fixed field of $H$; by Galois theory, $[L:k] = n$.

Let $\alpha \in L$ and let $O(\alpha)$ be its $G_s$-orbit. There is a unique $G_s$-equivariant function $c : \{1, \ldots, n\} \to O(\alpha)$ taking $1 \mapsto \alpha$. (Namely, $c(i) = g \cdot \alpha$ where $g \in G_s$ is any element for which $\sigma(g)$ takes $1$ to $i$.) Then the linear form
\[f_\alpha := \sum_{i=1}^n c(i) t_i \in L[t_1, \ldots, t_n]\]
is invariant for the action of $G_s$ by $\sigma$, since
\[g \cdot  f_\alpha = \sum_{i=1}^n g \cdot (c(i) t_i) = \sum_{i=1}^n c(\sigma(g) \cdot i) t_{\sigma(g) \cdot i} = f_\alpha.\]
Letting $\alpha_1, \ldots, \alpha_n$ be a basis for $L$ over $k$, we have
\[k^s[t_1, \ldots, t_n]^{G_s} = k^s[f_{\alpha_1}, \ldots, f_{\alpha_n}]^{G_s} = k[f_{\alpha_1}, \ldots, f_{\alpha_n}].\]
As an example, let $n=2$ and $k = \R$, with the nontrivial twisted structure of \Cref{ex:twisted-real-p1-2}. Then
\[\R[(\A^1_\R)^2_\sigma] = \R[t_1 + t_2, i(t_1 - t_2)].\]
If the roots represent a complex conjugate pair $t_i = x \pm i y$, we see that the generators correspond to $x$ and $y$ up to scalar.
\end{example}

If $X$ is a $k$-variety, and $X'$ is a twist of $X$, then $\mathbb{A}^1$-connectivity of $X$ need not imply $\mathbb{A}^1$-connectivity of $X'$. A simple example to consider is $\P^n_k$ -- the twists of this are Severi--Brauer varieties which admit a $k$-rational point if and only if they split. The non-split ones have no $k$-rational points, hence are not $\mathbb{A}^1$-connected, despite the fact that $\mathbb{P}^n_k$ is $\mathbb{A}^1$-connected. 

The twists we use in this paper are mild enough where they do not destroy connectivity. The more general statement is the following, which is a mild extension of \cite[2.43]{KLSW-count}:

\begin{proposition}\label{prop:p1sigma-a1-connected}
Let $k$ be a field, and let $A/k$ be a finite \'etale $k$-algebra of degree $n$. Then $\Res_{A/k}\P^1_A$ is $\mathbb{A}^1$-chain connected.
\end{proposition}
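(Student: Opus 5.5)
We reduce to the case where $A$ is a field by multiplicativity of Weil restriction. Write $A \cong \prod_{j} L_j$ with each $L_j/k$ a finite separable field extension. Then $\Res_{A/k}\P^1_A \cong \prod_j \Res_{L_j/k}\P^1_{L_j}$. A product of $\A^1$-chain connected varieties is $\A^1$-chain connected: given two $L$-points $(x_j)$ and $(y_j)$, we chain coordinatewise, moving one factor at a time while holding the others fixed. So it suffices to treat $X = \Res_{L/k}\P^1_L$ with $L/k$ a finite separable field extension. We also note that $X$ is smooth, since it is a twist of $(\P^1)^n$.

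For the field case, we use the big open cell. Let $U = \Res_{L/k}\A^1_L \subset X$. This is isomorphic to the affine space $\A^n_k$, via the coordinates $f_{\alpha_1},\dots,f_{\alpha_n}$ of \Cref{ex:twisted-coordinate-ring}. Since $\A^n$ is $\A^1$-chain connected by straight lines, any two $F$-points of $U$, for a separable extension $F/k$, can be joined by a single affine line. The same holds for the translates of $U$ under the $k$-group $\Res_{L/k}\PGL_{2,L}$, which acts on $X$. In particular it holds for the cells $U_a = \Res_{L/k}(\P^1_L \setminus \{a\})$ with $a \in \P^1(L)$, and $U_a \cong \A^n_k$ via a Möbius transformation over $L$ sending $a \mapsto \infty$.

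Now take $F$-points $x, y \in X(F)$; these are $L\otimes_k F$-points of $\P^1$. We want some $a \in \P^1(L)$, after base change to $F$, such that neither $x$ nor $y$ meets $a$. Write $L\otimes_k F = \prod_i E_i$, a product of fields. The requirement is that, in each factor $E_i$, the images of $x$ and $y$ differ from the image of $a$. The points of $\P^1(L)$ that fail this condition lie in a proper closed subset of the affine space $\Res_{L/k}\A^1_L$, namely a finite union of hypersurfaces.
\begin{itemize}
\item If $k$ is infinite, $k$-points are Zariski dense in $\A^n_k$, so such an $a$ exists. Then $x$ and $y$ both lie in $U_a(F) \cong \A^n(F)$, and a single line joins them.
\item If $k$ is finite, the hypersurface-avoidance may fail. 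In that case we instead chain through an intermediate point: move $x$ to a point $z$ inside $U_a \cap U_b$ for suitable $a, b$, and then move $z$ to $y$.
\end{itemize}

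The main obstacle is the finite-field (or small $F$) case, where $\P^1(L)$ may be too small to avoid the points of $x$ and $y$. We would first try a two-step chain. Choose $a$ avoiding $x$ only; this requires avoiding the at most $[L\otimes F : F]$ points determined by $x$, which is still a hypersurface condition. Then choose $b$ avoiding $y$, and connect $x$ to $y$ through a point $z \in U_a(F)\cap U_b(F)$. Here $U_a \cap U_b$ is a nonempty open subset of the affine space $U_a$, but it is still not obviously rich in $F$-points over a finite $F$. If this fails, we fall back on the argument of \cite[2.43]{KLSW-count}, which handles $\Res_{L/k}\P^1_L$ for field extensions. The only extension needed here is the product reduction above.
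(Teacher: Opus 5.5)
Your proof is correct and matches the paper's. You decompose $A$ into separable field extensions, use that $\A^1$-chain connectedness is preserved under products (you prove this directly by moving one factor at a time, where the paper cites \cite[2.44]{KLSW-count}), and ultimately rely on \cite[2.43]{KLSW-count} for each $\Res_{L/k}\P^1_L$.

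Incidentally, your big-cell argument already handles finite $k$, so the fallback is not needed. Each $L\to E_i$ is injective, so in each factor $E_i$ at most one $a\in\P^1(L)$ can meet $x$ (resp.\ $y$). That excludes at most $2n$ values of $a$, a finite set rather than a union of hypersurfaces. Since $|\P^1(L)|\ge 2^n+1>2n$, a good $a$ always exists, and a single line in $U_a(F)\cong\A^n(F)$ joins $x$ and $y$.
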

\begin{proof} Since $A$ is a product of finite separable field extensions $L_i/k$,
\[
    \Res_{A/k} \P^1_A \cong \prod_i \Res_{L_i/k}\P^1_{L_i}.
\]
Each of the $\Res_{L_i/k}\P^1_{L_i}$ is $\A^1$-chain connected by \cite[2.43]{KLSW-count}, and the product is $\A^1$-connected by \cite[2.44]{KLSW-count}.
\end{proof}
The same proof works to show that if $Z$ is smooth and proper over $A$ and $\mathbb{A}^1$-chain connected after base change to any factor field of $A$ then $\Res_{A/k}Z$ is $\mathbb{A}^1$-chain connected as well.

\begin{lemma}\label{lem:sym-sigma-is-flat}
Let $k$ be any field, and $\sigma \colon \Gal(k^s/k) \to \Sigma_n$ any continuous group homomorphism. Then the $n$th symmetric power operation over the absolute closure
\[
\Sym \colon (\mathbb{P}^1_{k^s})^n \to \mathbb{P}^n_{k^s}
\]
descends to give a well-defined, flat morphism of $k$-varieties (that is, an $\mathbb{A}^1$-chamber)
\begin{align*}
    \Sym_\sigma \colon (\mathbb{P}^1_k)^n_\sigma \to \P^n_k.
\end{align*}
\end{lemma}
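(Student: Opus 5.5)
The plan is to prove descent by Galois descent, and flatness by fpqc descent along $k^s/k$.

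For the descent step, I will check that $\Sym$ is equivariant for the two $G_s$-actions. On the source, $g$ acts by the twisted action $(t_1,\ldots,t_n)\mapsto \sigma(g)(gt_1,\ldots,gt_n)$ of \Cref{def:sigma-action}. On the target $\P^n_{k^s}$, $g$ acts only through its Galois action on coefficients. The equivariance comes down to the identity
\[\Sym(\sigma(g)(gt_1,\ldots,gt_n)) = \prod_i (t-gt_{\sigma(g)^{-1}(i)}) = g\cdot\prod_i(t-t_i).\]
It holds because $\Sym$ is $\Sigma_n$-invariant and is defined over $k$; on the points at infinity one uses the homogenized version, where the same argument applies.

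The twisted action is a continuous semilinear action on the quasi-projective variety $(\P^1_{k^s})^n$: continuity holds because $\sigma$ factors through a finite quotient. Hence it is an effective descent datum, and by \Cref{ex:twisted-coordinate-ring} or the Weil restriction description its descent is $(\P^1_k)^n_\sigma=\Res_{A/k}\P^1_A$. A $G_s$-equivariant morphism of $k^s$-varieties descends uniquely to a $k$-morphism, and this gives $\Sym_\sigma$. Concretely, on affine charts, $\Sym^*$ sends the coefficient functions (defined over $k$) to elementary symmetric polynomials in the $t_i$. These are $\Sigma_n$-invariant and $k$-rational, so they are $G_s$-invariant under the twisted action and lie in $k^s[t_1,\ldots,t_n]^{G_s}$.

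For flatness, it suffices to check it after the faithfully flat base change to $k^s$, where $\Sym_\sigma$ becomes the usual $\Sym\colon(\P^1)^n\to\P^n$. That map is finite, since it is the quotient by $\Sigma_n$ and hence proper with finite fibers. Its source and target are both smooth of dimension $n$, so miracle flatness \cite[00R4]{Stacks} applies, exactly as in the flatness argument for $\Wr$.

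Finally, the source is smooth, and it is essentially $\A^1$-connected because it is even $\A^1$-chain connected by \Cref{prop:p1sigma-a1-connected}. So $\Sym_\sigma$ is an $\A^1$-chamber in the sense of \Cref{def:A1-chamber}. The step requiring the most care is the effectivity of the descent datum together with its identification with $\Res_{A/k}\P^1_A$. I would handle this by covering the source with $G_s$-stable affine opens: for example, the twisted-stable open sets where no $t_i$ equals a fixed $k$-rational point $c$, together with their $\PGL_2(k)$-translates. On those opens, affine Galois descent applies directly, as in \Cref{ex:twisted-coordinate-ring}.
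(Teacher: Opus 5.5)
Your proposal is correct and follows the paper's route: you descend $\Sym$ using its equivariance for the twisted action, which is immediate from $\Sigma_n$-invariance, and you get flatness by fpqc descent along $\Spec k^s\to\Spec k$. You also supply two things the paper leaves implicit: flatness of $\Sym$ over $k^s$ (finite plus miracle flatness), and smoothness and $\A^1$-connectivity for the ``$\A^1$-chamber'' clause.

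One small slip: the opens $\{t_i\neq c\ \forall i\}$ for $c\in\P^1(k)$ fail to cover the source when $k=\mathbb{F}_q$ with $q+1\le n$. Complements of Galois-stable closed points of $\P^1_k$ would work instead. This is harmless, since effectivity of the descent already follows from quasi-projectivity.
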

\begin{proof} By Galois descent, it suffices to see that $\Sym$ is $\Gal(k^s/k)$-equivariant with respect to the twisted action on the left and the ordinary Galois action on the right. Indeed this is immediate by $\Sigma_n$-equivariance of symmetric products, so this descends to a map from the Weil restriction. Since $\Spec(k^s) \to \Spec(k)$ is faithfully flat, and flatness satisfies fpqc descent, the resulting map $\Sym_\sigma$ is flat as well.
\end{proof}
We now show that the maps $\Sym_\sigma$ define chambers with respect to $\Zdomino$.

\begin{lemma}\label{lem:sym-is-domino-chamber} In the notation of \Cref{lem:sym-sigma-is-flat}, $\Sym_\sigma$ is an $\A^1$-chamber of the Wronski map relative to $\Zdomino$.
\end{lemma}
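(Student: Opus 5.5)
To show $\Sym_\sigma$ is an $\A^1$-chamber of the Wronski map relative to $\Zdomino$, I will verify the conditions of \Cref{def:relative-A1-chamber} one at a time. By \Cref{prop:Zdomino-not-contained-in-ramification-locus}, $\Zdomino$ has no component in the ramification locus $R_{\Wr}$, so the definition applies. It then suffices to show three things: $\Sym_\sigma$ is an $\A^1$-chamber of $\P^{dm}$; the image divisor $\Wr(\Zdomino)$ is the discriminant hypersurface $\Delta_{dm}$, with some multiplicity; and $\Sym_\sigma^*\Delta_{dm}$ is twice a divisor on $(\P^1_k)^n_\sigma$.

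For the first point, flatness is \Cref{lem:sym-sigma-is-flat}. Smoothness holds because $(\P^1_k)^n_\sigma \cong \Res_{A/k}\P^1_A$ becomes $(\P^1)^n$ over $k^s$, and smoothness descends. $\A^1$-chain connectivity, hence essential $\A^1$-connectivity, is \Cref{prop:p1sigma-a1-connected}. For the second point, \Cref{prop:preimage-of-discriminant-locus} says $\Wr$ maps $\Zdomino$ surjectively onto $\Delta_{dm}$. Since $\Delta_{dm}$ is irreducible, the pushforward divisor $f(D)$ is supported on $\Delta_{dm}$.

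The third point is the main step. I would compute $\Sym^*\Delta_{dm}$ over $k^s$ first. The discriminant of $\prod(t-t_i)$ equals $\prod_{i<j}(t_i-t_j)^2$ up to a unit, in affine coordinates. The homogeneous version in $[t_i:u_i]$ is $\prod_{i<j}(t_iu_j-t_ju_i)^2$, a section of $\mathcal{O}(2n-2,\dots,2n-2)$, matching $\deg\Delta_n=2n-2$. Hence $\Sym^*\Delta_n = 2\sum_{i<j}\Delta_{ij}$, where $\Delta_{ij}$ are the big diagonals. The divisor $E=\sum_{i<j}\Delta_{ij}$ is stable under all of $\Sigma_n$ and under Galois. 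It is therefore stable under the twisted action, so it descends to a divisor $E_\sigma$ on $(\P^1_k)^n_\sigma$. Pullback of divisors commutes with the flat base change to $k^s$, and that base change is injective on divisors, so $\Sym_\sigma^*\Delta_n = 2E_\sigma$. If $f(D)$ carries a multiplicity $c\,\Delta_{dm}$, its pullback is $2cE_\sigma$, still a square.

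The obstacle I expect is the characteristic-$p$ bookkeeping. One must check that the discriminant equation is exactly $\prod_{i<j}(t_i-t_j)^2$ up to a unit, with no extra $p$-power factors, and that the descended $E_\sigma$ is Cartier. Cartierness is automatic by smoothness. The reducedness of $\Delta_{dm}$, coming from the explicit formula, should suffice for the first check, since the identity holds over $\Z$ with unit leading coefficient. The locus where some $t_i=\infty$ needs only the homogeneous formula, or the observation that codimension-$2$ issues do not arise.
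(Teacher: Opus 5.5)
Your argument is correct, but its key step differs from the paper's. The paper works with \emph{equations} on the affine chart $(\A^1)^n_\sigma$. It notes that $G_s$ acts on $\prod_{i<j}(t_i-t_j)$ through the character $\sgn\circ\sigma$. So either this product descends (if $\sigma$ lands in the alternating group), or $\sqrt{c}\prod_{i<j}(t_i-t_j)$ descends, where $k(\sqrt{c})$ is the quadratic extension cut out by $\sgn\circ\sigma$. This yields the identity of functions $\Sym_\sigma^* f_{\Delta_n}=c^{-1}\bigl(\sqrt{c}\prod_{i<j}(t_i-t_j)\bigr)^2$.

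You work with \emph{divisors} instead. The big diagonal is stable under $\Sigma_n$ and under Galois, so it descends to $E_\sigma$, and injectivity of divisor pullback along $k^s/k$ gives $\Sym_\sigma^*\Delta_n=2E_\sigma$. Your route has three advantages:
\begin{itemize}
\item it avoids the case split and the choice of quadratic extension;
\item it handles all of $(\P^1_k)^n_\sigma$ at once through the homogeneous formula, whereas the paper's restriction to the affine chart needs the small extra remark that no component of the pullback lies over $t_i=\infty$;
\item it proves exactly what \Cref{def:relative-A1-chamber} demands.
\end{itemize}
The paper's route proves more. Because the pullback is a \emph{scalar} times a square, $f_{\Delta_n}$ has square class $c$ (or $1$ in the even case) at every $k$-point of the $\sigma$-chamber off the discriminant. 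In other words, the square class of the discriminant is the quadratic character $\sgn\circ\sigma$. This is what ties the chambers to \Cref{prop:Gr24} and to the finite-field table, where the odd cycle types $2^11^2$ and $4^1$ are exactly the ones giving $\langle 1,3u\rangle$.

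On the obstacle you flag: the identity $\mathrm{disc}=\prod_{i<j}(t_i-t_j)^2$ holds over $\Z$ and needs no checking. What your argument needs, and what the paper uses implicitly, is that $f_{\Delta_n}$ is squarefree, so that $\Wr(\Zdomino)$ is a positive multiple of $\operatorname{div}(f_{\Delta_n})$. This uses $\mathrm{char}(k)\neq 2$, which the standing hypothesis guarantees when $dm\geq 2$. Here is the argument: a repeated factor would be a nonconstant symmetric polynomial dividing $\prod_{i<j}(t_i-t_j)$. By symmetry it would then be a scalar multiple of this alternating polynomial, which is impossible when $2\neq 0$. Squarefreeness also follows from the reducedness asserted in \Cref{prop:preimage-of-discriminant-locus}. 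In characteristic $2$ the claim genuinely fails: the discriminant of $z^2+bz+c$ is $b^2$, and $\{b=0\}$ pulls back to the diagonal with multiplicity one.
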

\begin{proof} The map is well-defined and flat by \Cref{lem:sym-sigma-is-flat} and $(\P^1_k)^n_\sigma$ is $\A^1$-connected by \Cref{prop:p1sigma-a1-connected}. The locus $\Zdomino$ is irreducible and is not contained in the ramification locus of the Wronski map by \Cref{prop:Zdomino-not-contained-in-ramification-locus}.

Since $f(\Zdomino) = \Delta_n$ is the discriminant locus, it remains to show that the equation for the discriminant $\Delta_n$ pulls back to a global unit times a square. In fact it will pull back to a scalar times a square. For simplicity, we work in the affine chart $(\A^1)^n_\sigma$. Over $k^s$, we may write the pullback of the usual equation $f_{\Delta_n}$ for the discriminant as 
\[\Sym_\sigma^* f_{\Delta_n} = \prod_{i < j}(t_i - t_j)^2,\]
where the $t_i$ are the roots of the Wronskian. We define
\[\sqrt{\Delta_n} = \prod_{i < j} (t_i - t_j).\]
We are interested to see if this is actually defined on $(\A^1)^n_\sigma$ --- that is, whether it is fixed under the action of the Galois group from \Cref{def:sigma-action}.

Since the coefficients of $\sqrt{\Delta_n}$ are integers, they are fixed by the Galois action. An element $g \in \Gal(k^s/k)$ only acts on $\sqrt{\Delta_n}$ by permuting its factors and introducing a sign
\[g \cdot \sqrt{\Delta_n} = (-1)^{\sigma(g)} \sqrt{\Delta_n},\]
where $(-1)^{\sigma(g)}$ is the sign of the permutation $\sigma(g) \in \Sigma_n$. If $\sigma \colon \Gal(k^s/k) \to \Sigma_n$ lands inside the alternating group, this sign is always positive, and we can conclude that $\sqrt{\Delta_n}$ is $\sigma$-invariant, hence well-defined on $(\P^1)^n_\sigma$, and we conclude that $\Sym_\sigma^\ast f_{\Delta_n} = \sqrt{\Delta_n}^2$.

If $\sigma$ does not land in the alternating group, we obtain a surjection
\[
    \Gal(k^s/k) \xto{\sigma} \Sigma_n \tto \{\pm1\}.
\]
This factors through a quotient $\Gal(F/k)$, where $F/k$ is quadratic. Since $\mathrm{char}(k) \ne 2$, there exists $\alpha \in F$ such that $g\cdot \alpha = (-1)^{\sigma(g)} \alpha$ holds for all $g\in \Gal(k^s/k)$, e.g. by writing $F = k(\sqrt{c})$ for some nonsquare $c \in k$ and putting $\alpha = \sqrt{c}$. Then we see that
\[g \cdot (\alpha \, \sqrt{\Delta_n}) = (-1)^{\sigma(g)} \alpha \cdot (-1)^{\sigma(g)} \sqrt{\Delta_n} = \alpha \sqrt{\Delta_n},\]
so $\alpha \sqrt{\Delta_n}$ is $\sigma$-invariant and we conclude that
\[\Sym_\sigma^* f_{\Delta_n} = \tfrac{1}{c} (\alpha \sqrt{\Delta_n})^2. \qedhere\]
\end{proof}

\begin{example}\label{ex:twisted-discriminant}
Consider the case of $(\P^1_\R)^2_\sigma$ with the nontrivial twisted real structure of \Cref{ex:twisted-real-p1-2}. Let $t_1, t_2$ denote the roots of a quadratic polynomial $(z-t_1)(z-t_2)$, with discriminant $(t_1 - t_2)^2$. By \Cref{ex:twisted-coordinate-ring}, the coordinate ring is generated by $t_1 + t_2$ and $i(t_1 - t_2)$, so we can write the pullback of the discriminant as 
\[(t_1 - t_2)^2 = (-1) \cdot (i(t_1 - t_2))^2,\]
so it is a unit times the square of $i(t_1 - t_2)$.
\end{example}

\subsection{Twisted degrees of the Wronski map} We can now prove our main theorem, recalling the notation we've fixed in \Cref{nota:rho-domino}.
\begin{theorem}\label{thm:main} Let $m,d\ge 0$, let $k$ be a field of characteristic 0 or $\ge m+d$, and let $\sigma \colon \Gal(k^s/k) \to \Sigma_{dm}$ be a continuous homomorphism. Then for any two $k$-points $w_1,w_2 \in \P^{dm}_k$ in the complement of the discriminant, whose roots have Galois type $\sigma$, the associated twisted degrees of the Wronski computed over their fiber are equal:
\begin{align*}
    \deg^{\sdomino}(\Wr,\rhodomino)(w_1) = \deg^{\sdomino}(\Wr,\rhodomino)(w_2) \text{ in } \GW(k).
\end{align*}
That is, for each $\sigma$ there is a well-defined $\sigma$\emph{-twisted degree} of the Wronski map $\deg^\sigma(\Wr) \in \GW(k)$. Over the reals, the sign of $\deg^\sigma(\Wr)$ is the degree of the Wronski map computed relative to the character orientation in \cite{LevinsonPurbhoo}. We additionally have an untwisted degree of the Wronski map computable on any chamber whose sign over the reals is equal to that computed in \cite{eremenkoDegreesRealWronski2002}.
\end{theorem}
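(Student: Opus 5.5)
The plan is to apply \Cref{thm:fiber-invariance-twisted-degrees-on-chambers} with $f = \Wr$, $D = \Zdomino$, $s = \sdomino$, $\rho = \rhodomino$, and with the chamber $g = \Sym_\sigma \colon (\P^1_k)^{dm}_\sigma \to \P^{dm}_k$. Checking the hypotheses is mostly bookkeeping with earlier results:
\begin{itemize}
\item $\Wr$ is finite flat lci by the proposition following \Cref{prop:Wronski-well-defined}.
\item $\rhodomino$ is a $\Zdomino$-orientation by \Cref{lem:wronski-relatively-oriented-relative-to-vertdomino}.
\item $\Zdomino$ is irreducible and has no component in $R_{\Wr}$ by \Cref{prop:Zdomino-not-contained-in-ramification-locus}.
\item $\Wr(\Zdomino) = \Delta_{dm}$ by \Cref{prop:preimage-of-discriminant-locus}.
\item $\Sym_\sigma$ is an $\A^1$-chamber relative to $\Delta_{dm}$ by \Cref{lem:sym-is-domino-chamber}.
\end{itemize}
The remaining input is that the $k$-points $w_1, w_2$ lift to $k$-points of the chamber. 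Given $w_i \notin \Delta_{dm}$ with roots of Galois type $\sigma$, choose an ordering $(t_1, \ldots, t_{dm})$ of its roots in $\P^1(k^s)$ so that $g t_j = t_{\sigma(g)(j)}$. Such an ordering exists precisely because ``Galois type $\sigma$'' means the permutation action on the roots is isomorphic, as a $G_s$-set, to $\sigma$ (one may need to conjugate $\sigma$, but conjugate $\sigma$ give isomorphic twisted root spaces over $\P^{dm}$). This tuple is fixed by the twisted action of \Cref{def:sigma-action}, so it is a $k$-point of $(\P^1_k)^{dm}_\sigma$ mapping to $w_i$. The theorem then gives $\deg^{\sdomino}(\Wr,\rhodomino)(w_1) = \deg^{\sdomino}(\Wr,\rhodomino)(w_2)$. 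We define $\deg^\sigma(\Wr)$ to be this common value, equivalently the class $\overline\beta \in \GW(k)$ of the extended form on the chamber.

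One small point to address is that the theorem uses $W$ smooth and essentially $\A^1$-connected. Here $W = \Res_{A/k}\P^1_A$ is smooth, and it is $\A^1$-chain connected by \Cref{prop:p1sigma-a1-connected}. Inside the proof of \Cref{thm:fiber-invariance-twisted-degrees-on-chambers}, $W$ is restricted to the complement of a codimension-$2$ set, which preserves essential $\A^1$-connectivity.

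For the real comparisons, take $k = \R$, so $\sigma$ is of type $1^r2^s$ and the chamber's real locus is $(\R\P^1)^r \times (\C\P^1)^s$, mapping onto $P_\mu$. By \Cref{prop:twisted-degree-sum-of-local-degrees}, the signature of $\deg^\sigma(\Wr)$ is the signed count $\sum_x \operatorname{sign}(\Jac(\Wr)(x)\,\sdomino(x))$ over a real fiber. The character orientation of \cite{LevinsonPurbhoo} is by construction the standard orientation multiplied by the sign of the equation of $\Zdomino$, so this signed count is its Brouwer degree on $P_\mu$. For the untwisted statement, when $d$ or $m$ is odd we use the orientation $\rho$ of \Cref{nota:rho-domino}, via \Cref{rmk:twisting-degree-by-section-of-2D}. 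Since $\P^{dm}$ is $\A^1$-connected, \Cref{prop:a1connected-constancy} gives a constant class, and its signature is the Jacobian sign count, which is Eremenko--Gabrielov's real degree. In the remaining case $d$ and $m$ both even, we instead take the untwisted degree $\deg(\Wr_W,(g_X^*\rhodomino)(H))$ on the chamber as in \Cref{exa:pullback-section-is-a-square}. By \cite{eremenkoDegreesRealWronski2002} the real degree is $0$ when $d+m$ is even, which covers this case.

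I expect the main obstacle to be the real comparisons rather than the invariance statement. Matching orientation conventions exactly requires care: identifying the Jacobian computed in charts compatible with $\rhodomino$ with the standard orientation on a Schubert cell used by Eremenko--Gabrielov, and checking that the sign of $\sdomino$ agrees with the Levinson--Purbhoo character sign, possibly up to a global sign. These are conventions-level checks. I would handle them by comparing at one explicit fiber, for instance $w = t^{dm} - 1$ or a totally real fiber, and then propagating by constancy on each chamber.
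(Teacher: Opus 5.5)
Your argument for the invariance statement is the paper's argument. \Cref{lem:sym-is-domino-chamber} makes $\Sym_\sigma$ an $\A^1$-chamber of $\Wr$ relative to $\Zdomino$, and \Cref{thm:fiber-invariance-twisted-degrees-on-chambers} finishes it. Your explicit lift of each $w_i$ to a $k$-point of $(\P^1_k)^{dm}_\sigma$ fills in a step the paper leaves implicit. Your Levinson--Purbhoo comparison is the same one-line observation the paper makes.

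There is a genuine gap in the untwisted comparison when $d$ and $m$ are both even. The class you choose, $\deg(\Wr_W,(g_X^*\rhodomino)(H))$, is not a new degree:
\begin{itemize}
\item Away from $H$ we have $\mathcal{L}(H)=\mathcal{L}$. There the form $(g_X^*\rhodomino)(H)$ is $g_X^*\rhodomino$ followed by the inclusion $\omega_{\Wr_W}(-2H)\hookrightarrow\omega_{\Wr_W}$.
\item That inclusion is multiplication by the canonical equation of $2H=g_X^*\Zdomino$. This equation is $g_X^*\sdomino$ up to a scalar, since global units on $\Gr$ are constant.
\item Concretely, at $x\notin H$ the local degree is $\langle\Jac\cdot\sdomino/t^2\rangle(x)=\langle\Jac\cdot\sdomino\rangle(x)$, because $t(x)^2$ is a nonzero square.
\end{itemize}
So your class is just the extension across $H$ of $g^*\deg^{\sdomino}(\Wr,\rhodomino)$, up to a global square class. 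Over $\R$ its signature is therefore $\pm\chi^{\rect}(\mu)$, not $0$. For example, with $d=m=2$ and $\sigma=1^4$, \Cref{prop:Gr24} gives $\langle 3,f_{\Delta_4}\rangle$ with $f_{\Delta_4}>0$, which has signature $2$. Your sentence ``by Eremenko--Gabrielov the real degree is $0$ when $d+m$ is even, which covers this case'' only quotes the value you want to match. It gives no reason why your class has that signature, and in fact it does not.

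The paper's route avoids this and needs no parity split. Restrict to the affine open $\A^{dm}\subset\P^{dm}$ of polynomials with no root at $\infty$:
\begin{itemize}
\item Its preimage is the big Schubert cell for $\Osc_\bullet(\infty)$, an affine space. There $\omega_{\Wr}$ is trivial and any two orientations differ by a scalar in $k^\times$.
\item The resulting degree lies in $\sheafGW(\A^{dm})=\GW(k)$, well defined up to a global square class.
\item It pulls back to the affine part $(\A^1_k)^{dm}_\sigma$ of every chamber.
\item Over $\R$ its signature is by definition the Eremenko--Gabrielov degree of this proper map of affine spaces.
\end{itemize}
When $d$ or $m$ is odd, your global $\rho$ restricts to this orientation up to a scalar, so that half of your argument is fine.
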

\begin{proof} By \Cref{lem:sym-is-domino-chamber}, $\Sym_\sigma$ is an $\A^1$-chamber of the Wronski map relative to $\Zdomino$ in the sense of \Cref{def:relative-A1-chamber}. Thus this result follows from fiber-invariance of twisted degrees over a chamber \Cref{thm:fiber-invariance-twisted-degrees-on-chambers}.

Comparison with \cite{LevinsonPurbhoo} follows immediately from the definition of the character degree in their paper. Comparison with \cite{eremenkoDegreesRealWronski2002} comes from passing to an affine open and carrying out the computation there, using the fact that orientations on affine space are well-defined up to a scalar from the base field.
\end{proof}

\begin{remark} Note that we haven't explicitly chosen $\rhodomino$ of $\sdomino$ in \Cref{nota:rho-domino}. However, since the Grassmannian is proper and geometrically connected and has $\Pic[2]=0$, any two choices of $\rhodomino$ and/or of $\sdomino$ differ by a global scalar, which will affect our degree computations only by multiplication by a global square class (see \Cref{rmk:proper-geometrically-connected-twisted-deg-independent-of-everything-except-D}). In \cite{eremenkoDegreesRealWronski2002}, this is witnessed by the fact that all of their degrees have a sign ambiguity $\pm$. In \cite{LevinsonPurbhoo} this choice is absorbed into the character orientation, as two choices of $\sdomino$ likewise differ by a global scalar. Thus all the degrees we compute in the following section should be understood as being defined up to multiplication by some square class from $k^\times$.
\end{remark}

\begin{remark}
Since our $\A^1$-chambers are all Galois twists of $(\P^1_k)^n$, our construction can be thought of as a morphism $f : X \to Y$ of $k^s$-varieties with many possible $k$-rational structures, all oriented over $k$ with a common orientation $\rho$, but where $\deg(f)$ varies with the choice of twisted rational structure.
\end{remark}

%% file: sections/examples.tex
\section{Examples}\label{sec:examples}
Here we explore a few examples of Wronski maps and compute their degrees (where the parity works out) as well as their twisted degrees corresponding to various choices of Galois twists.

\subsection{The Grassmannian \texorpdfstring{$\Gr(2,4)$}{Gr(2,4)}}

We prove the following formula:

\begin{proposition}\label{prop:Gr24} Let $k$ be a field of characteristic $\ne 2,3$, and let $\Delta_4\subseteq \P^4$ denote the discriminant locus, cut out by the standard formula $f_{\Delta_4}$. Then
\[
    \deg^{\sdomino}(\Wr,\rhodomino) = \langle 3,f_{\Delta_4} \rangle
\]
on $\sheafGW(\P^4\setminus \Delta)$.
\end{proposition}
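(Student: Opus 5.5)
The plan is to reduce the identity in $\sheafGW(\P^4\setminus\Delta)$ to fiberwise computations on the chambers $\Sym_\sigma$, and then verify it on one convenient fiber in each chamber.

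\emph{Reduction to chambers.} Both sides are sections of $\sheafGW(\P^4\setminus\Delta)$. By \Cref{prop:pullback-of-twisted-deg}, the twisted degree pulls back along each $\Sym_\sigma$. By \Cref{lem:sym-is-domino-chamber}, on $(\P^1_k)^4_\sigma$ it extends to a class that is constant by \Cref{prop:p1sigma-a1-connected}. On the right side, $\Sym_\sigma^*f_{\Delta_4}=\tfrac1c(\alpha\sqrt{\Delta_4})^2$, so $\Sym_\sigma^*\langle 3,f_{\Delta_4}\rangle=\langle 3,c\rangle$, where $c$ is $1$ if $\sigma$ lands in $A_4$ and is the quadratic discriminant class otherwise. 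This is again constant. It therefore suffices to check, for each $\sigma$ and a single $k$-point $w$ of type $\sigma$ (or a point over the function field of the chamber, which covers the generic point of $\P^4$), that
\[\deg^{\sdomino}(\Wr,\rhodomino)(w)=\langle 3,f_{\Delta_4}(w)\rangle.\]
Rather than enumerate all $\sigma$ over all fields, I would prove the equality at the generic point of $\P^4$. The map $\sheafGW(\P^4\setminus\Delta)\to\GW(k(\P^4))$ is injective by unramifiedness/purity, and $\P^4\setminus\Delta$ is smooth. So it is enough to compute the rank-$2$ form over the generic Wronskian $w$ with indeterminate coefficients.

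\emph{Explicit computation.} Next I would make everything explicit on $\Gr(2,4)$. This is the Plücker quadric $p_{12}p_{34}-p_{13}p_{24}+p_{14}p_{23}=0$ in $\P^5$, and $\Wr$ is the linear projection $\P^5\dashrightarrow\P^4$ from a point off the quadric. Concretely, the Wronskian of $\mathrm{span}(f_1,f_2)$ with $f_i\in k_3[t]$ has coefficients that are explicit linear combinations of the $p_{ij}$, with exactly one combination, say $q=p_{14}-\lambda p_{23}$, forgotten. A fiber over $w$ is then the pair of solutions in $q$ of a quadratic equation $aq^2+bq+e=0$ whose coefficients depend on $w$. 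That is, $\Wr^{-1}(w)\cong\Spec k(w)[q]/(\text{quadratic})$, a degree $2$ étale algebra $B$.

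In this picture, $\omega_\Wr=\mathcal{O}(1)$ and $\Zdomino$ has class $3[X_\square]$ by \Cref{prop:degree-domino-variety}, so $\sdomino$ is a cubic in the Plücker coordinates. I would identify it explicitly, for instance by computing the condition $\dim(V\cap\Osc_3(t))\ge2$ for some $t$ as a resultant-type cubic. The twisted degree is then the scaled trace form
\[(x,y)\mapsto\Tr_{B/k(w)}\bigl(xy\,\sdomino/\Jac\bigr),\]
with $\Jac$ given by the derivative of the quadratic in $q$, by \Cref{prop:twisted-degree-sum-of-local-degrees}. The domino cubic should factor against the quadratic so that the integrand becomes an explicit element $u\in B$.

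\emph{Evaluating the form.} Once $u=u_0+u_1q$ is known, the binary form $\Tr(u\,x^2)$ on $B$ has determinant $-\mathrm{disc}(B)\cdot N(u)$. Its diagonalization is $\langle \Tr u,\ \Tr(u)\cdot(\text{that determinant})\rangle$. The goal is to show that this equals $\langle 3,f_{\Delta_4}\rangle$ up to squares. Note that $\mathrm{disc}(B)$ should equal $f_{\Delta_4}$ times a square, since the branch locus of $\Wr$ is $\Delta_4$ by \Cref{prop:preimage-of-discriminant-locus}.

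The main obstacle is this explicit algebra: producing $\sdomino$ and simplifying $\Tr(u)$ and $N(u)$ modulo squares into the clean classes $3$ and $3f_{\Delta_4}$. I would first try to exploit $\PGL_2$-equivariance and normalize $w$, say to $t^4+at^2+bt+c$, to cut down the variables. As a sanity check, I would run the computation at a totally split Wronskian such as $t(t-1)(t+1)(t-2)$, where the fiber can be found by hand. This also fixes the global scalar ambiguity noted in \Cref{rmk:proper-geometrically-connected-twisted-deg-independent-of-everything-except-D}.
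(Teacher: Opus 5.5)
Your strategy is the paper's. You use unramifiedness of $\sheafGW$ to pass to the generic monic quartic $w=t^4+at^3+bt^2+ct+d$, note that $\Wr^{-1}(w)$ is a quadratic \'etale algebra $B$, and trace the scaled form down. The paper does this on the Schubert cell $X_\emptyset$ rather than via the Pl\"ucker projection, which is cosmetic. Your chamber reduction is not needed, and its ``it suffices'' is unjustified as stated: pulling back to the function field of $(\P^1_k)^4_\sigma$ is not injective on $\GW$ (on the untwisted chamber it kills $\langle f_{\Delta_4}\rangle-\langle 1\rangle$). Since you abandon it, no harm is done.

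The gap is that the proposition \emph{is} the computation you defer as ``the main obstacle'', and your roadmap for it is wrong. $\mathrm{disc}(B)$ is not $f_{\Delta_4}$ times a square. Let $p_{ij}$ ($0\le i<j\le 3$) be the $2\times 2$ minors of the coefficient matrix of $f_1,f_2$ in the basis $1,t,t^2,t^3$. Then $\Wr=\sum_{i<j}(j-i)p_{ij}t^{i+j-1}$. Setting $p_{23}=1$, the Pl\"ucker relation becomes $-3x^2+bx+d-\tfrac{ac}{4}=0$ in $x=p_{03}$. Its discriminant is the quartic invariant $I=b^2-3ac+12d$. So the branch locus of $\Wr$ is the quadric $\{I=0\}$, not $\Delta_4$. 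Indeed \Cref{prop:preimage-of-discriminant-locus} and \Cref{prop:Zdomino-not-contained-in-ramification-locus} say $\Wr$ is \emph{unramified} over a general point of $\Delta_4$. Hence $f_{\Delta_4}$ cannot enter through the discriminant of $B$.

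It enters instead through the norm of the twisting element. The paper's direct computation gives local twisted degrees $\langle u\rangle$ with $u=\tfrac{2}{27}I^2\pm\tfrac{1}{27}J\sqrt{I}$, where $J=72bd+9abc-27c^2-27a^2d-2b^3$ is the other classical invariant.
\begin{itemize}
\item $\Tr(u)=\tfrac{4}{27}I^2\sim 3$.
\item The identity $4I^3-J^2=27f_{\Delta_4}$ gives $N(u)=\tfrac{1}{27}If_{\Delta_4}$.
\item In the basis $1,\sqrt{I}$, the Gram determinant of $\Tr_{B/F}(u\,xy)$ is $4I\,N(u)\sim 3f_{\Delta_4}$. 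The factor $I$ in the norm cancels $\mathrm{disc}(B)=I$. Get this sign right: a minus sign here would land you on $\langle 3,-f_{\Delta_4}\rangle$.
\end{itemize}
Hence $\deg^{\sdomino}(\Wr,\rhodomino)=\langle\tfrac{4}{27}I^2,\tfrac{4}{27^2}I^4f_{\Delta_4}\rangle=\langle 3,f_{\Delta_4}\rangle$.

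Until you actually produce $\sdomino$ and $u$, or give a structural reason why $\Tr(u)\sim 3$ and $N(u)\sim 3If_{\Delta_4}$, your proposal is a plan for the paper's proof rather than a proof.
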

\begin{proof} Since $\P^4$ is smooth and $\sheafGW$ is unramified, it suffices to work on an open affine where we have deleted the discriminant locus. We can therefore work on the affine Schubert cell $X_\emptyset \subset \Gr(2,4)$ defined by the affine coordinates $(a_{11},a_{12}, a_{21},a_{22})$, where
\[
X_\emptyset = \left\{ \begin{pmatrix}
    1 & 0 & a_{11} & a_{12} \\
    0 & 1 & a_{21} & a_{22}
\end{pmatrix} \right\}.
\]
Rescaling the Wronski polynomial of a point in $X_\emptyset$ to be monic, we obtain that the Wronski polynomial is
\[
\Wr = t^4 + 2a_{21} t^3 + (3a_{22} - a_{11})t^2 - 2a_{12} t + a_{11}a_{22} - a_{12}a_{21}.
\]
The Jacobian of the Wronski map is then $\Jac(\Wr) = 4a_{11} + 12 a_{22}$, and the domino variety is cut out by the equation
\[
\sdomino = a_{11}^2 a_{22} - a_{11} a_{12} a_{21} + a_{11} a_{21}^2 a_{22} - 2a_{11} a_{22}^2 + a_{12}^2 - a_{12} a_{21}^3 + 3a_{12} a_{21} a_{22} + a_{22}^3.
\]
Over a general point $t^4 + at^3 + bt^2 + ct + d$ in $\A^4 \subseteq \P^4$, we can see by direct computation that the local twisted degrees at the fibers are given by
\begin{align*}
    \langle 2/27b^4 - 4/9ab^2c + 2/3a^2c^2+ 16/9b^2d - 16/3acd + 32/3d^2 \\
    \pm (- 2/27b^3 + 1/3abc - a^2d  - c^2 + 8/3bd )\sqrt{b^2 - 3ac + 12d}\rangle.
\end{align*}
If $b^2-3ac+12d$ is not a square in $k$, we can trace down one of the degrees above from its field of definition to compute the global degree \cite{brazeltonTraceLocalBbb2021}. In particular, even if $b^2-3ac+12d$ is a square, we will obtain the same answer regardless by the relations in $\GW(k)$.\footnote{In general along a quadratic field extension, $\Tr_{k(\sqrt{c})/k}\langle a+b\sqrt{c}\rangle = \langle 2a, 2ac(a^2-b^2c)\rangle$. If $c$ has square roots $\pm r$ in $k$, then the analogous class $\langle a+br, a-br\rangle$ is again equal to $\langle 2a, 2ac(a^2-b^2c)\rangle$ using the basic relations for generators on $\GW(k)$.} Thus we can assume without loss of generality it is a nonsquare, and take the field trace to obtain
\begin{align*}
    &\langle 4/27(b^2 - 3ac + 12d)^2\rangle +  \langle 4/27^2f_{\Delta_4}\cdot (b^2 - 3ac + 12d)^4\rangle \\
    &= \langle 3,f_{\Delta_4}\rangle. \qedhere
\end{align*}
\end{proof}

\subsection{The Grassmannian \texorpdfstring{$\Gr(2,4)$}{Gr(2,4)} over finite fields}

When $d=m=2$ and working over a finite field $\mathbb{F}_q$, we have five different chambers to consider, coming from the five possible Galois twists $\sigma \colon \Gal(\overline{\mathbb{F}_q}/\mathbb{F}_q) \to \Sigma_4$. As the absolute Galois group of a finite field is pro-cyclic, the data of a Galois twist is the same data as a partition of the number 4. While we know, in each chamber, what answer we expect to get by \Cref{prop:Gr24}, we still find it instructive to work through specific examples.

\begin{notation} Denote by $\deg^\sigma \Wr \in \GW(k)$ the $\sigma$-twisted degree of the Wronski map over $\Gr(d,d+m)$.
\end{notation}

We then have the following:

\begin{example}\label{exa:Gr24-over-Fq} Let $\mathbb{F}_q$ be a finite field of characteristic $\ne 2,3$. Let $d=m=2$, and take $u \in \mathbb{F}_q$ be any nonsquare element. Then the twisted degrees of the Wronski map are as follows:
\begin{center}
    \begin{tabular}{|r | c c c c c|}
        \hline
        $\sigma$ & $1^4$ & $2^1 1^2$ & $2^2$ & $3^1 1^1$ & $4^1$ \\
        \hline
        $\deg^\sigma(\Wr)$ & $\langle 1,3\rangle$ & $\langle 1,3u\rangle$ & $\langle 1,3\rangle$ & $\langle 1,3\rangle$ & $\langle 1,3u \rangle$\\
        \hline
    \end{tabular}
\end{center}
\end{example}
Via \Cref{thm:main}, twisted degrees depend only upon the field of definition of the roots of the Wronski polynomial, so for each $\sigma$ it suffices to find a convenient degree four polynomial over which to carry out the computation. The subtlety comes from finding a polynomial which works for all finite fields.

\fbox{$\sigma = 1^4$:} Pick any point $c\ne -1,0,1$ in $\mathbb{F}_q$, and consider the Wronski polynomial $(t+1)t(t-1)(t-c)$. The fibers of the Wronski map over this quadratic polynomial can be computed as
\begin{align*}
    L &= \begin{pmatrix} 1 & 0 & \frac{1- \sqrt{1 + 3c^2}}{2} & - \frac{c}{2} \\
    0 & 1 & - \frac{c}{2} & \frac{-1 + \sqrt{1+3c^2}}{6}\end{pmatrix}\\
    L' &= \begin{pmatrix} 1 & 0 & \frac{1- \sqrt{1 - 3c^2}}{2} & - \frac{c}{2} \\
    0 & 1 & - \frac{c}{2} & \frac{-1 - \sqrt{1+3c^2}}{6}\end{pmatrix}.
\end{align*}
Whether these solutions are $k$-rational or not depends upon whether $1+3c^2$ is a square class in our base field $k$, so without loss of generality let's assume that $1+3c^2$ is a non-square. In that case, we transfer the twisted local degree down from the quadratic field extension, and compute that the global twisted degree is
\begin{align*}
    \Tr_{\mathbb{F}_q(\sqrt{3c^2+1})/\mathbb{F}_q} \left\langle\frac{2}{27} \left(9 c^4+\left(6+9 \sqrt{3 c^2+1}\right) c^2-\sqrt{3 c^2+1}+1\right) \right\rangle = \langle 1,3\rangle.
\end{align*}

\fbox{$\sigma = 2^11^2$:} Let $u\in \mathbb{F}_q$ be a nonsquare class, and consider the Wronski polynomial $(t^2-u)(t^2-1)$ with roots at $\pm1$ and $\pm\sqrt{u}$. The field of definition of the fibers is $\mathbb{F}_q(\sqrt{u^2 + 14u + 1})/ \mathbb{F}_q$, and we can compute that the twisted degree is equal to
\begin{align*}
    &\Tr_{\mathbb{F}_q(\sqrt{u^2 + 14u + 1})/ \mathbb{F}_q} \left\langle 2u^4 + 56u^3 + 396 u^2 + 56u + 2 + (2u^3 - 66u^2 - 66u + 2)\sqrt{u^2 + 14u + 1} \right\rangle \\
    &= \langle 1,3u\rangle.
\end{align*}
\fbox{$\sigma = 2^2$:} Again letting $u$ be a nonsquare, we can take the Wronski polynomial with roots at $\pm \sqrt{u}$ and $1\pm \sqrt{u}$:
\begin{align*}
    \Wr(t) = (t^2-u)(t^2-2t+1-u).
\end{align*}
Again we see the local twisted degrees are Galois conjugate over the field of definition of the fibers $\mathbb{F}_q(\sqrt{16u^2-4u+1})$, so we can trace down one of the degrees to get that $n_{2,2,\sigma}$ is
\begin{align*}
    &\Tr_{\mathbb{F}_q(\sqrt{16u^2-4u+1})/\mathbb{F}_q} \left\langle 512u^4 - 256u^3+ 96u^2- 16u+ 2 + (- 128u^3  + 48u^2  + 12u  - 2) \sqrt{16u^2{-}4u{+}1}\right\rangle \\
    &= \langle 1,3\rangle.
\end{align*}

\begin{remark} The previous three cases are observable over the reals, and the signatures of the resulting forms can be seen in \cite{LevinsonPurbhoo}. The following two cases do not appear over the real numbers, but are nevertheless computable from our general framework.
\end{remark}

\fbox{$\sigma = 3^11^1$:} Let $t^3 + at + b$ be an irreducible polynomial over $\mathbb{F}_q$. Then we can work with the Wronski polynomial
\begin{align*}
    W(t) = t(t^3 + at + b).
\end{align*}
The fibers of this turn out to be rational, and the sum of their twisted degrees is
\begin{align*}
    \langle -ab^2, \frac{4}{27}a^4 + ab^2 \rangle.
\end{align*}
Since the cubic factor in $W(t)$ is irreducible, we know that its discriminant $-4a^3 - 27b^2$ is a nonzero square in $\mathbb{F}_q$. Observe then that
\begin{align*}
    \frac{4}{27}a^4 + ab^2 = \frac{-a}{27}(-4a^3 - 27b^2) \equiv -3a.
\end{align*}
Thus the form above reduces to
\begin{align*}
    \langle -a, -3a\rangle = \langle 1,3 \rangle.
\end{align*}
\fbox{$\sigma = 4^1$:} We have to try to find an irreducible degree four polynomial over a large family of finite fields. Observe that if a polynomial $t^4 + at + b$ is irreducible over $\mathbb{F}_q$ then its discriminant $256b^3 - 27a^4$ is a nonsquare in $\mathbb{F}_q$. We can work over such a polynomial
\[
    W(t) = t^4 + at + b,
\]
and we get that the local twisted degrees are defined over $\mathbb{F}_q(\sqrt{3b})$, and are of the form
\begin{align*}
    \langle 32b^2 \pm 6a^2 \sqrt{3b}\rangle.
\end{align*}
Assuming $3b$ is a nonsquare, the global twisted degree is
\begin{align*}
    \Tr_{\mathbb{F}_q(\sqrt{3b})/\mathbb{F}_q}\langle 32b^2 + 6a^2 \sqrt{3b}\rangle &= \langle 1, -768(27a^4 - 256b^3)b^4
 \rangle \\
 &= \langle 1, 3(-27a^4 + 256b^3)\rangle.
\end{align*}
\begin{remark} Another example to consider is the cyclotomic polynomial
\begin{align*}
    \Phi_4(t) = t^4 + t^3 + t^2 + t + 1.
\end{align*}
This is irreducible over $\mathbb{F}_q$ precisely when $q\equiv 2,3\pmod{5}$ (which in particular guarantee $5$ is a nonsquare in $\mathbb{F}_q$). The fibers of the Wronski map involve a square root of 10, so we have to break into cases depending on whether 10 is a square. If 10 is a nonsquare, then the global twisted degree can be computed as
\begin{align*}
    \Tr_{\mathbb{F}_q(\sqrt{10})/\mathbb{F}_q} \langle 200 + 25\sqrt{10}\rangle &= \langle 1,15\rangle = \langle 3,5\rangle.
\end{align*}
This will agree with the computation above exactly when $15$ and $3u$ have the same square class, that is, when $5$ is a nonsquare, which we know.

On the other hand if $10$ happened to be a square, the global twisted degree over $\Phi_4(t)$ would become
\begin{align*}
    \langle 200 + 25\sqrt{10}\rangle + \langle 200 - 25\sqrt{10}\rangle &= \langle 8+\sqrt{10}\rangle + \langle 8 -\sqrt{10}\rangle = \langle 1,54\rangle = \langle 1,6\rangle.
\end{align*}
This agrees with $\langle 1,3u\rangle$ when 2 is a nonsquare, which is guaranteed by the hypotheses that 5 is a nonsquare and 10 is a square.
\end{remark}

\begin{remark}
The degree of the complex Wronski map for $m=2,d=3$ is $5$, and we know by \cite{purbhooJeuTaquinMonodromy2010} that the Galois groups of osculating Schubert problems are full symmetric whenever $d\ne m$.\footnote{In \cite[5.6]{purbhooJeuTaquinMonodromy2010}, Purbhoo exhibits certain classes $s_{k,L}$ in the monodromy group of the real Wronski map. These are bijections on the set $\SYT(d\times m)$, and fail to generate the full symmetric group on this set only for square tableaux, i.e. when $d=m$.} Unfortunately this means that trying to replicate \Cref{prop:Gr24} will be much harder, as there is no formula in radicals for the general fibers over a Wronski polynomial of degree 6. Nevertheless we know by \cite{LevinsonPurbhoo} that the answer cannot depend only upon the square class of the discriminant (for instance the loci $1^5$ and $2^21^2$ have different twisted degrees over the reals but the discriminant is positive in both chambers) hence it is necessarily much more complicated than \Cref{prop:Gr24}.
\end{remark}